\documentclass[11pt, a4paper]{article}
\title{Commutation relations for two-sided radial SLE}
\usepackage[T1]{fontenc}

\usepackage{mathtools}
\usepackage{lmodern}
\usepackage{amsthm,amsmath,amsfonts,amssymb}
\usepackage{graphicx}
\usepackage{enumerate,enumitem}
\usepackage{authblk}
\usepackage{cite,url}
\usepackage[normalem]{ulem}
\usepackage{comment}
\usepackage[margin=1cm]{caption}




\usepackage{hyperref}
\hypersetup{
    colorlinks=true, 
    linkcolor=black, 
    urlcolor=black, 
    citecolor=black,
    linktoc=all 
}

\addtolength{\textwidth}{2.2cm}
\addtolength{\hoffset}{-1.1cm}
\addtolength{\textheight}{1.6cm}
\addtolength{\voffset}{-.8cm}

\setlength{\parindent}{1.5 em}
\setlength{\parskip}{0.3em}

\linespread{1.1}
\allowdisplaybreaks

\setlist[enumerate]{topsep = 1ex, leftmargin=.5cm, itemsep= -3pt}
\setlist[itemize]{topsep = 1ex, leftmargin=.5cm, itemsep= -3pt}


\let\OLDthebibliography\thebibliography
\renewcommand\thebibliography[1]{
  \OLDthebibliography{#1}
  \setlength{\parskip}{1pt}
  \setlength{\itemsep}{2pt}
}

\newtheorem{thm}{Theorem}[section]
\newtheorem{cor}[thm]{Corollary}

\newtheorem{lem}[thm]{Lemma}
\newtheorem{prop}[thm]{Proposition}

\newtheorem{definition}[thm]{Definition}

\theoremstyle{definition} 
\newtheorem{df}[thm]{Definition}

\newtheorem{remark}[thm]{Remark}

\numberwithin{equation}{section}

\usepackage{longtable,booktabs}


\usepackage[font=normal]{caption}
\usepackage{floatrow}

\usepackage{mathtools}

\usepackage[dvipsnames]{xcolor}

\global\long\def\ud{\mathrm{d}}



\global\long\def\ii{\mathfrak{i}}


\newcommand{\abs}[1]{\left\lvert #1 \right \rvert}

\newcommand{\comm}[1]{\left[ #1 \right]}

\newcommand{\mc}[1]{\mathcal{#1}}
\newcommand{\m}[1]{\mathbb{#1}}

\newcommand{\mf}[1]{\mathfrak{#1}}



\renewcommand\Re{\operatorname{Re}}
\renewcommand\Im{\operatorname{Im}}

\def\SLE{\operatorname{SLE}}

\def\a{\alpha}
\def\b{\beta}
\def\g{\gamma}

\def\d{\delta}
\def\D{\Delta}

\def\t{\theta}

\def\l{\lambda}
\def\k{\kappa}

\def\vare{\varepsilon}
\def\HH{{\mathbb H}}
\def\PP{\mathbb{P}}

\def\eps{\varepsilon}


\def\dd{\mathrm{d}}


\newcommand{\ad}[1]{\overline{#1}}


\def\1{\mathbf{1}}


 \newcommand{\splus}{{\scriptstyle +}}


\global\long\def\CR{\mathrm{CR}}
\global\long\def\E{\mathbb{E}}
\global\long\def\hF{\;_2\mathrm{F}_1}
\global\long\def\cond{\;|\;}

\def\mbt{\mathbf{t}}
\def\capa{\operatorname{cap}}
\def\ee{\mathrm{e}}
\def\LG{\mathcal{G}}
\def\LZ{\mathcal{Z}}
\usepackage[dvipsnames]{xcolor}

\author{Ellen Krusell\thanks{\protect\url{ekrusell@kth.se}
KTH Royal Institute of Technology, Stockholm, Sweden}\qquad
Yilin Wang\thanks{\protect\url{yilin@ihes.fr} Institut des Hautes \'Etudes Scientifiques, Bures-sur-Yvette, France}
\qquad
Hao Wu\thanks{\protect\url{hao.wu.proba@gmail.com}
Tsinghua University, Beijing, China}}
\begin{document}

\maketitle

\begin{abstract}
We study the commutation relation for 2-radial SLE in the unit disc starting from two boundary points. 
We follow the framework introduced by Dub\'{e}dat~\cite{Dub_comm}. 
Under an additional requirement of the interchangeability of the two curves, we classify all locally commuting 2-radial SLE$_\kappa$ for $\kappa\in (0,8)$: it is either a two-sided radial SLE$_\kappa$ with spiral of constant spiraling rate or a chordal SLE$_\kappa$ weighted by a power of the conformal radius of its complement. 
Namely, for fixed $\k$ and starting points, we have exactly two one-parameter continuous families of locally commuting 2-radial SLE. 
We also discuss the semiclassical limit of the commutation relation as $\kappa \to 0$. In particular, we show that the limit for the second family with an appropriately chosen power of conformal radius is a chord that minimizes a modified chordal Loewner energy, which is unique only when the endpoints are not antipodal. \\

\noindent \textbf{Keywords:} Schramm--Loewner evolution, radial Loewner chain, commutation relation, resampling property.\\
\textbf{MSC:} 60J67. 
\end{abstract}



\section{Introduction}

\subsection{Background on radial SLE}
In 1999, O. Schramm \cite{Schramm2000} introduced the Schramm-Loewner evolution (SLE)  
as the non-self-crossing random curve driven by a multiple of Brownian motion using Loewner's transform.  This definition is motivated by a quest to describe mathematically the random interfaces in two-dimensional critical lattice models, which satisfy the \emph{conformal invariance} and the \emph{domain Markov property}.  These properties impose that the chordal Loewner driving function of such a random interface has to be a multiple of Brownian motion, hence justifying the definition of chordal SLE. Indeed, SLEs are proved to be the scaling limit of interfaces in many conformally invariant statistical mechanics models, e.g., \cite{Schramm:ICM,LSW04LERWUST,SS09GFF,CDHKS, SmirnovPerco, CamiaNewmanPerco}, and play a central role in random conformal geometry. 

However, to characterize the other natural variant --- radial SLE --- one needs an additional condition on the reflection symmetry. As we are mainly concerned with radial SLE in the present article, let us briefly describe its definition and characterization. 
We will describe the radial Loewner chain in $\m D = \{z \in \m C \,|\, |z| < 1\}$ targeting at $0$. Radial Loewner chain in other simply connected domain $D \subset \m C$ targeting at an interior point $z_0 \in D$ is defined via a conformal map from $\m D$ onto $D$ sending $0$ to $z_0$.

\textbf{Conformal radius and capacity.}
  For any compact subset $K$ (not necessarily connected) of $\ad {\m D}$ such that $\m D \setminus K$ is simply connected and contains $0$,  we let $g_K$ be the unique conformal map $\m D \setminus K  \to \m D$ such that $g_{K} (0) = 0$ and $g'_{K} (0) > 0$ (called \emph{the radial mapping-out function} of $K$).   
  The \emph{conformal radius} of $\m D\setminus K$ is 
  \[\CR(\m D\setminus K):=(g'_K(0))^{-1}.\]
  The \emph{capacity} of $K$ is 
  \[\capa(K)=\log g'_K (0) =-\log\CR(\m D\setminus K).\]

\textbf{Radial Loewner chain.}
  For $\theta\in [0,2\pi)$, suppose $\eta: [0,T]\to \overline{\m D}$ is a continuous non-self-crossing curve such that $\eta_0=\ee^{\ii \theta}$ and $\eta_{(0,T)}\subset\m D\setminus\{0\}$. Let $D_t$ be the connected component of $\m D\setminus\eta_{[0,t]}$ containing the origin. Let $g_t: D_t\to \m D$ be the unique conformal map with $g_t(0)=0$ and $g_t'(0)>0$. We say that the curve is parameterized by capacity if $g_t'(0)=\ee^t$. Then $g_t$ satisfies the radial Loewner equation
\begin{equation}\label{eqn::radialLoewner}
    \partial_t g_t(z)=g_t(z)\frac{\ee^{\ii \xi_t}+g_t(z)}{\ee^{\ii \xi_t}-g_t(z)}, \quad g_0(z)=z,
\end{equation}
where $t\mapsto \xi_t  \in \m R /2\pi \m Z$ is continuous and called the \emph{driving function} of $\eta$. We note that if $z = \ee^{\ii V_0} \in \partial \m D$, then taking a continuous branch $V_t := \arg g_t (\ee^{\ii \t})$, we have
\begin{equation}\label{eq::loewner_boundary}
    \partial_t V_t (z) = \cot ((V_t - \xi_t)/2).
\end{equation}

\textbf{Characterization of radial SLE.} Consider a family $(\m P_\t)_{\t \in \m R/2\pi \m Z}$ of probability distributions on curves $\eta : [0,\infty) \to \ad{\m D}$ with $\eta_0 = \ee^{\ii \t}$ and parametrized by capacity that satisfies the following properties:
\begin{enumerate}
    \item (\emph{Conformal invariance})  For all $a \in \m R / 2\pi \m Z$, let $\rho_a (z)  = \ee^{\ii a} z$ be the rotation map $\m D \to \m D$. For all $a, \t \in \m R/2\pi \m Z$, the pullback measure $\rho_a^* \,\m P_\t = \m P_{\t - a}$. From this, we may extend the definition of $\m P_\t$ to any simply connected domain $D$ with an interior marked point $z_0$ by pulling back using a uniformizing conformal map $D \to \m D$ sending $z_0$ to $0$.
    \item (\emph{Domain Markov property}) For any $t >0$, $\t \in \m R / 2\pi \m Z$, let $\eta \sim \m P_\t$. Conditioning on $\eta|_{[0,t]}$, $g_t (\eta_{[t,\infty)}) \sim \m P_{\xi_t}$ where $\xi_\cdot$ is the driving function of $\eta$. See Figure~\ref{fig::radial11}.
    \item (\emph{Reflection symmetry}) Let $\iota : z\mapsto \ad z$ be the complex conjugation, then $\m P_\t \sim \iota^* \m P_{-\t}$.
\end{enumerate}
Then there exists $\k \ge 0$ such that for all $\t$, the driving function of $\eta \sim \m P_\t$ is $t\mapsto \t + \sqrt \k B_t$ modulo $2\pi \m Z$, where $B_\cdot$ is the standard Brownian motion. This follows from the fact that $(\sqrt \k B)_{\k \ge 0}$ are the only continuous L\'evy processes $W$ that have the same law under the transformation $W \mapsto - W$.
In this case,  as $t\to \infty$, $\eta_t \to 0$ almost surely. The distribution $\m P_\t$ is the law of the \emph{radial $\SLE_{\kappa}$} in $\m D$ starting from $\ee^{\ii \t}$. We also call it radial $\SLE_{\kappa}$ in $(\m D; \ee^{\ii\t}; 0)$ for short. 

\begin{figure}[ht!]
    \begin{center}    \includegraphics[width=0.6\textwidth]{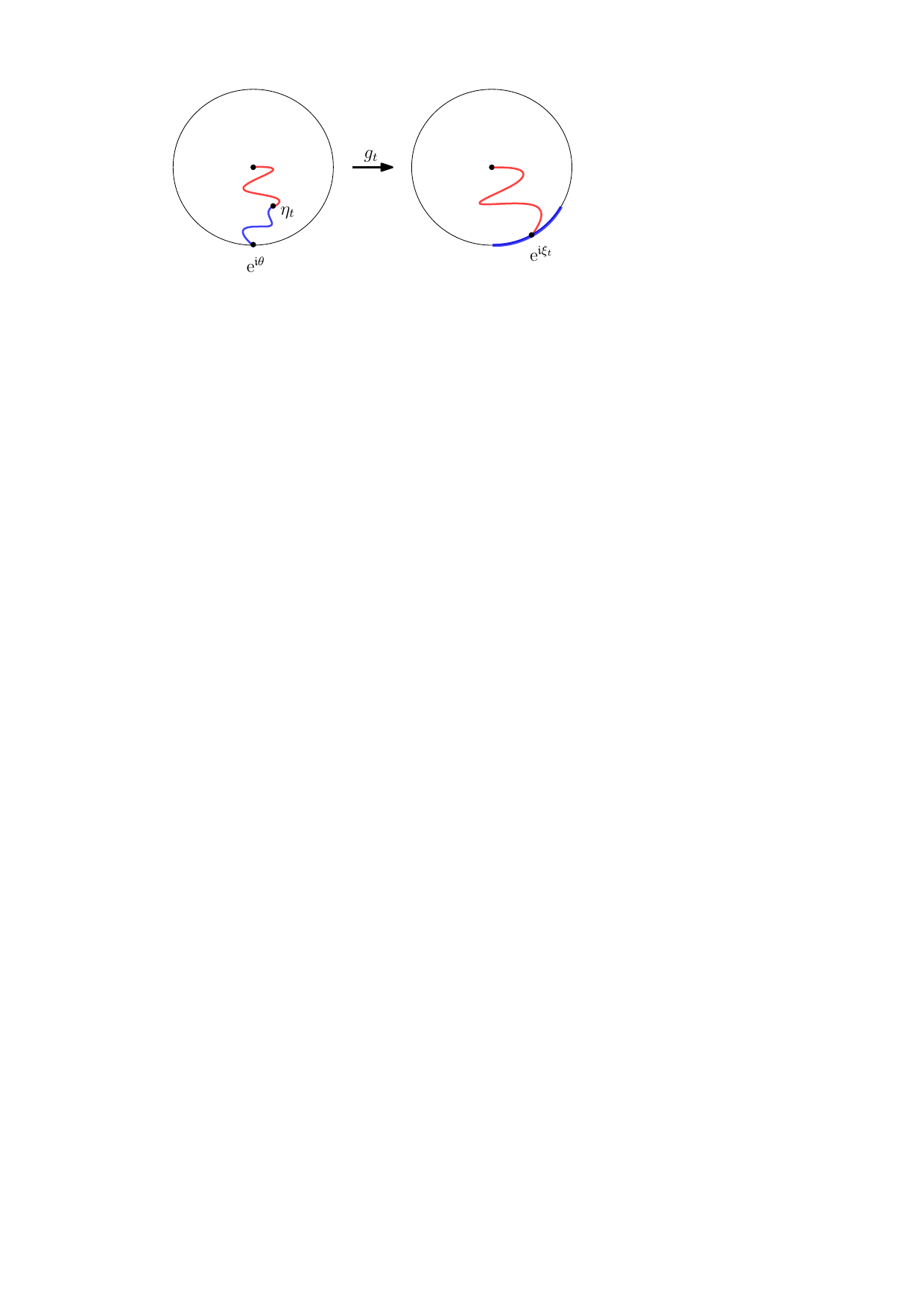}
    \end{center}
    \caption{Domain Markov property of radial SLE.}
    \label{fig::radial11}
\end{figure}

The third assumption on the reflection symmetry is natural for conformally invariant and achiral statistical mechanics models. However, one may also wonder what happens without the reflection symmetry, this means that the law of the driving function is no longer required to be invariant with respect to $W \mapsto - W$. 
From the classification of continuous L\'evy processes, we obtain that there exists  $\k \ge 0$ and $\mu \in \m R$ such that for all $\t$, the driving function of $\eta \sim \m P_\t$ is 
$$t\mapsto \t + \sqrt \k B_t + \mu t \qquad \text{ (mod) } 2\pi \m Z.$$ 
The curve generated is called \emph{radial SLE$_\k$ with spiraling rate $\mu$} starting from $\ee^{\ii \t}$ and we denote it as radial $\SLE_\k^\mu$ in $(\m D;\ee^{\ii \t}; 0)$. It was shown in \cite{IG4} that almost surely, $\eta_t \to 0$ as $t \to \infty$ for all $\k > 0$ and $\mu \in \m R$.

\subsection{Locally commuting 2-radial SLE}
\label{subsec::intro_axioms}
 The random radial curve satisfying conformal invariance and domain Markov property are easily characterized 
 thanks to the fact all simply connected domains (excluding $\m C$) with one marked interior point and one marked boundary point are conformally equivalent.
  If we consider the radial Loewner chain of two curves growing from two distinct boundary points $\ee^{\ii \t_1}, \ee^{\ii \t_2} \in \partial \m D$, then we have a one-dimensional moduli space of the boundary data that we need to take into account.

 The goal of this work is to give an axiomatic characterization of locally commuting 2-radial SLEs. One example of such curves of particular interest, is the two-sided radial SLE analyzed in~\cite{Lawler2sidedSLE,field2016twosided, Lawler_Zhou_natural,HealeyLawlerNSidedSLE} which arises as the chordal SLE in $\m D$ connecting two boundary points conditioned on passing through $0$.
 
 More precisely, \emph{locally commuting 2-radial SLE} is a family of local laws $\m P_{(\t_1, \t_2)}$ of the pair of continuous non-self-crossing curves $(\eta^{(1)}, \eta^{(2)})$ in $\m D$ starting from all possible choices of $\ee^{\ii \t_1}, \ee^{\ii \t_2} \in \partial \m D$ under the following axioms. 

We first parameterize $\eta^{(1)}, \eta^{(2)}$ by their intrinsic radial capacities and consider the associated radial Loewner chains $g_{\cdot}^{(1)}$ and $g_{\cdot}^{(2)}$ respectively. 
    For all $\mbt = (t_1, t_2) \in \m R_{>0}^2$, let $g_\mbt$ be the radial mapping-out function of $\eta_{\mbt} : = \eta^{(1)}_{[0,t_1]} \cup \eta^{(2)}_{[0,t_2]}$.
\begin{itemize}[leftmargin=3em]
\item[\textbf{(CI)}] \textbf{Conformal invariance:} For all $a \in \m R / 2\pi \m Z$ and  $\t_1, \t_2 \in \m R/2\pi \m Z$, the pullback measure $\rho_a^* \,\m P_{(\t_1, \t_2)} = \m P_{(\t_1 - a, \t_2 - a)}$.  From this, we may extend the definition of $\m P_{\t_1, \t_2}$ to any simply connected domain $D$ with an interior marked point $z_0$ by pulling back using a uniformizing conformal map $D \to \m D$ sending $z_0$ to $0$.
    \item[\textbf{(DMP)}]\textbf{Domain Markov property: }
    Conditioning on $\eta_\mbt$, 
    \[\left(g_\mbt (\eta^{(1)} \setminus \eta^{(1)}_{[0,t_1]} ),g_\mbt(\eta^{(2)}\setminus \eta^{(2)}_{[0,t_2]}) \right) \sim \m P_{\left(\t^{(1)}_\mbt,\t^{(2)}_\mbt\right)},\]where $\left(\exp(\ii \t^{(1)}_\mbt), \exp(\ii \t^{(2)}_\mbt)\right) = \left(g_\mbt (\eta^{(1)}_{t_1}),  g_\mbt (\eta^{(2)}_{t_2})\right)$. 
    \item[\textbf{(MARG)}]
    \textbf{Marginal laws:}
    There exists $\k > 0$ such that the marginal local law of $\eta^{(j)}$ is ``absolutely continuous'' with respect to an $\SLE_\k$ for $j = 1,2$.
    \item[\textbf{(INT)}] \textbf{Interchangeability condition:} Let $\tau$ be the map which swaps $\eta^{(1)}$ and $\eta^{(2)}$. We have $\m P_{(\t_1, \t_2)} = \tau^* \m P_{(\t_2, \t_1)}$.
\end{itemize}

We note that, by ``radial'' we mean that the conformal mapping-out functions are normalized at the interior point $0 \in \m D$ and the precise axioms are written in neighborhoods of the starting points of the curves, see Section~\ref{sec:axioms}.  In particular, they do not necessarily imply that the curves end at $0$.

We also note that since $g_\mbt$ is parametrized by two times $t_1, t_2$, the condition \textbf{(DMP)} implies that $g_\mbt$ may be realized by first mapping out $\eta^{(1)}_{[0,t_1]}$ using $g_{t_1}^{(1)}$, then mapping out $g_{t_1}^{(1)} (\eta^{(2)}_{[0,t_2]})$, or vice versa. The image has the same law regardless of the order in which we map out the curves. This observation gives a \textbf{commutation relation} on the infinitesimal generators of the two curves (Proposition~\ref{prop:radial_comm}). Therefore, we call the family of the laws $(\m P_{(\t_1, \t_2)})$ as an \emph{interchangeable and locally commuting 2-radial SLE$_\k$}. 

These conditions and our analysis are very close to the commutation relation of SLE studied by Dub\'edat in~\cite{Dub_comm}, which focuses on classifying all locally commuting chordal SLEs.
Dub\'edat also derived the commutation relation in the radial setting in terms of the generators of radial SLE.  
One of our contributions is to find all solutions to the radial commutation relation and identify all interchangeable and locally commuting 2-radial SLE$_{\kappa}$.
It is also natural to use SLE/GFF couplings, particularly, \cite{IG4}, to find examples of commuting $2$-radial SLEs. See Remark~\ref{rem:coupling_GFF}.  Nevertheless, our approach is to give an axiomatic characterization by studying the BPZ equations satisfied by the partition functions, and it is unclear to the authors that  \emph{all} commuting $2$-radial SLEs can be coupled to GFF a priori.

\subsection{Main result: A classification}

We classify all locally commuting 2-radial SLE with the interchangeability condition \textbf{(INT)}. 
Similar to the analysis in \cite{Dub_comm}, we also show that the conditions \textbf{(CI), (DMP)}, and \textbf{(MARG)} imply that there exists $\mc Z : \{(\t_1, \t_2) \in \m R^2 \,|\, \t_1 < \t_2 < \t_1 +2 \pi\} \to \m R_{>0}$, called \textit{partition function} which encodes the family of distributions $\m P_
{(\t_1, \t_2)}$. See Section~\ref{sec:partition} for more details.

\begin{thm}\label{thm:main}
Fix $\kappa\in (0,8)$.
Suppose $\mc Z$ is the partition function for an interchangeable and locally commuting 2-radial $\SLE_{\kappa}$. Then $\mc Z$ is one of the following two functions: 
    \begin{enumerate}
        \item There exists $\mu\in\m R$ such that, up to a multiplicative constant, $\mc Z$ is the same as 
        \begin{equation}\label{eqn::2SLEspiral_pf}
    \LG_{\mu}(\theta_1, \theta_2)=\left(\sin\left((\theta_{2}-\theta_1)/2\right)\right)^{2/\k} \exp\left(\frac{\mu}{\kappa}(\theta_1+\theta_2)\right).
\end{equation}
In this case, the law of the corresponding locally commuting 2-radial $\SLE_{\kappa}$ is the same as two-sided radial $\SLE_{\kappa}$ with spiraling rate $\mu$, see Section~\ref{subsec::2SLEspiral} and Figure~\ref{fig::simulation}. 
        \item There exists $\alpha<1-\kappa/8$ such that, up to a multiplicative constant, $\mc Z$ is the same as 
\begin{equation}\label{eqn::CR_pf}
    \mc Z_{\alpha}(\theta_1, \theta_2)=\left(\sin\left((\theta_{2}-\theta_1)/2\right)\right)^{(\kappa-6)/\kappa}\phi_{\alpha}\left(\left(\sin\left((\theta_{2}-\theta_1)/4\right)\right)^2\right),
\end{equation}
where $\phi_{\alpha}$ is the unique solution to the following Euler's hypergeometric differential equation
\begin{equation}\label{eqn::Euler_initial}
\begin{cases}
u(1-u)\phi''(u)-\frac{3\kappa-8}{2\kappa}(2u-1)\phi'(u)+\frac{8\alpha}{\kappa}\phi(u)=0, \quad u\in (0,1);\\
\phi(1/2)=1, \quad \phi'(1/2)=0.
\end{cases}
\end{equation}
In this case, the law of the corresponding locally commuting 2-radial $\SLE_{\kappa}$ is the same as $\gamma$ chordal $\SLE_{\kappa}$ in $\m D$ weighted by 
$\CR(\m D\setminus\gamma)^{-\alpha}$, 
where $\CR(\m D\setminus\gamma)$ denotes the conformal radius of the connected component of $\m D\setminus \gamma$ containing the origin,
see Section~\ref{subsec::chordalSLE_CR}. 
    \end{enumerate}
\end{thm}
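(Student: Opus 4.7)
The plan is to follow the program of Dub\'edat~\cite{Dub_comm}, adapted to the radial setting. Under the hypotheses (CI), (DMP), and (MARG), the commutation relation of Proposition~\ref{prop:radial_comm} produces, at each of the two boundary insertions, a null-vector PDE on the partition function $\mc Z(\t_1,\t_2)$: schematically,
\[
\frac{\k}{2} \partial_{\t_i}^2 \mc Z + \cot\!\Big(\frac{\t_j-\t_i}{2}\Big) \partial_{\t_j} \mc Z + (\text{lower-order drift and curvature terms})\cdot \mc Z = 0, \qquad \{i,j\}=\{1,2\},
\]
i.e.\ the exact radial analogs of Dub\'edat's BPZ-type equations, with the cotangent kernel coming from the radial Loewner boundary equation~\eqref{eq::loewner_boundary}.

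Next, I would exploit (CI): lifting to the universal cover $\{\t_1 < \t_2 < \t_1+2\pi\}$, rotation invariance forces $\mc Z(\t_1+a,\t_2+a) = \ee^{\chi a}\,\mc Z(\t_1,\t_2)$ for some real constant $\chi$, which yields the ansatz
\[
\mc Z(\t_1,\t_2) \;=\; \ee^{\chi(\t_1+\t_2)/2}\, F(\t_2-\t_1).
\]
Substituting this ansatz into the pair of PDEs reduces the system to two second-order linear ODEs for $F$ on $(0,2\pi)$ whose coefficients depend on $\chi$. The interchangeability axiom (INT), which, up to the cocycle produced by a $2\pi$-shift, imposes $F(u) \propto F(2\pi-u)$, swaps these two ODEs and forces them to coincide. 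After extracting the appropriate $[\sin(u/2)]$-prefactor and substituting $v=\sin^2(u/4)$, the remaining equation becomes Euler's hypergeometric equation~\eqref{eqn::Euler_initial}, and the symmetry $\phi'(1/2)=0$ together with the normalization $\phi(1/2)=1$ pins down the solution $\phi_\a$.

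The dichotomy in the theorem should then emerge from how $\chi$ couples to the hypergeometric parameter in the reduced system. I expect the analysis to split into two mutually exclusive branches: either $\chi=0$, in which case the sine prefactor carries exponent $(\k-6)/\k$ and $\a$ is free (subject to $\a < 1-\k/8$, required to ensure $\phi_\a>0$ and hence $\mc Z>0$), giving the family $\mc Z_\a$ of Case~2; or the hypergeometric factor degenerates to a constant, the sine exponent is pinned to $2/\k$, and $\chi$ (equivalently $\mu=\k\chi/2$) is free, giving the family $\LG_\mu$ of Case~1. The hardest step, I expect, will be establishing this dichotomy cleanly: one has to show that no ``mixed'' solution combining a nonzero spiral rate with a genuinely nontrivial hypergeometric factor can satisfy both PDEs simultaneously. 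This will require a careful algebraic comparison of the $\chi$-dependent coefficients in the reduced ODE, together with an appeal to positivity of $\mc Z$ on the whole domain to rule out spurious sign-changing hypergeometric combinations.
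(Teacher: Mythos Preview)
Your proposal is correct and follows essentially the same route as the paper. One reassurance: the dichotomy you flag as ``the hardest step'' is in fact immediate---after (INT) forces the two BPZ constants to coincide, $F_1=F_2$ (this is Lemma~\ref{lem:F_12_equal}, and is the precise place where interchangeability enters), subtracting the two BPZ equations under your ansatz yields $\chi\bigl[\cot(\theta/2)-\kappa F'/F\bigr]=0$, which cleanly splits into $\chi=0$ (Case~2) or $F(\theta)\propto(\sin(\theta/2))^{2/\kappa}$ (Case~1), with no mixed solutions to rule out.
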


The first class above --- two-sided radial SLE with spiral --- also appeared in \cite{IG4} in the context of SLE/GFF coupling. We will discuss other related literature in Section~\ref{sec:comments}. The authors are unaware of studies on the second class of curves.

Both $\LG_{\mu}$ in~\eqref{eqn::2SLEspiral_pf} and $\mc Z_{\alpha}$ in~\eqref{eqn::CR_pf} are solutions to radial Belavin--Polyakov--Zamolodchikov (BPZ) equations (or called BPZ--Cardy equations derived in~\cite{KangMakarov2012,KangMakarovGFFCFT}, see also \cite{BBK, Dub_comm} for the BPZ equation for multichordal SLE): 
\begin{align*}
    \frac{\kappa}{2}\frac{\partial_{11}\mc Z}{\mc Z}+\cot\left((\theta_{2}-\theta_1)/2\right)\frac{\partial_2 \mc Z}{\mc Z}-\frac{(6-\kappa)/(4\kappa)}{\left(\sin((\theta_{2}-\theta_1)/2)\right)^2}&= F,\\
    \frac{\kappa}{2}\frac{\partial_{22}\mc Z}{\mc Z}-\cot\left((\theta_{2}-\theta_1)/2\right)\frac{\partial_1 \mc Z}{\mc Z}-\frac{(6-\kappa)/(4\kappa)}{\left(\sin((\theta_{2}-\theta_1)/2)\right)^2}& = F,
\end{align*}
where $\partial_i$ is the partial derivative with respect to $\theta_i$ and $F$ is the constant: 
\begin{align*}
    F & =\frac{\mu^2-3}{2\kappa}\ge -\frac{3}{2\kappa}, \quad &&\text{when }\mc Z=\LG_{\mu},\\
    F & = \frac{(6-\kappa)(\kappa-2)}{8\kappa}-\alpha>-\frac{3}{2\kappa},\quad&&\text{when }\mc Z=\mc Z_{\alpha}. 
\end{align*}


In general, the partition function $\mc Z_{\alpha}$ in~\eqref{eqn::CR_pf} involves hypergeometric function and is not explicit; but in the following two special cases, such function has a simple explicit expression:
\begin{itemize}
    \item When $\kappa=4$, $\mc Z_{\alpha}$ can be written as trigonometric functions and hyperbolic functions in $\theta=\theta_2-\theta_1$, see Remark~\ref{rem::CR_kappa4}.
    \item When $\alpha=\alpha_1(\kappa)$ which is the one-arm exponent for the conformal loop ensemble, $\mc Z_{\alpha_1(\kappa)}$ can be written as trigonometric functions in $\theta=\theta_2-\theta_1$, see Remark~\ref{rem::onearm}. 
\end{itemize}

\begin{figure}[ht]
    \begin{center}    
    \includegraphics[width=0.48\textwidth]{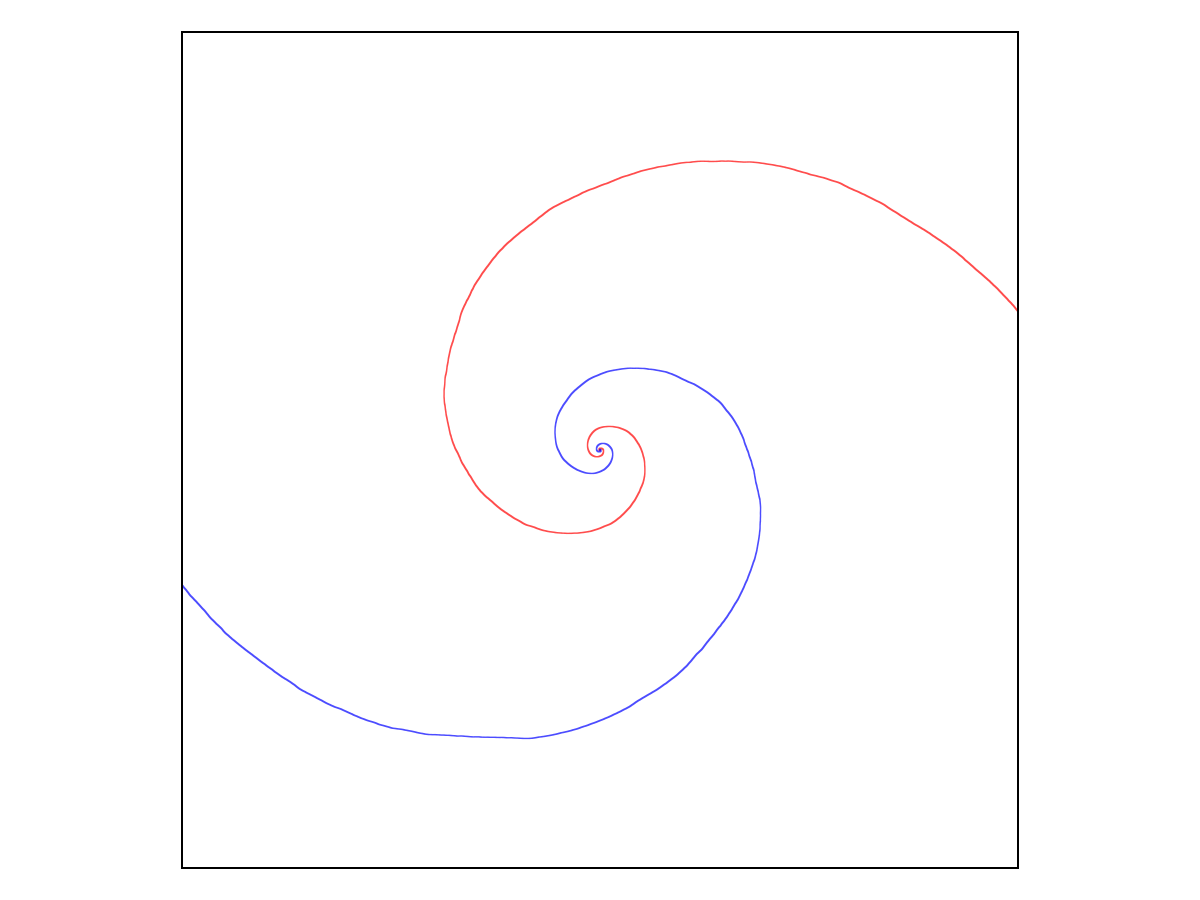}
    \includegraphics[width=0.48\textwidth]{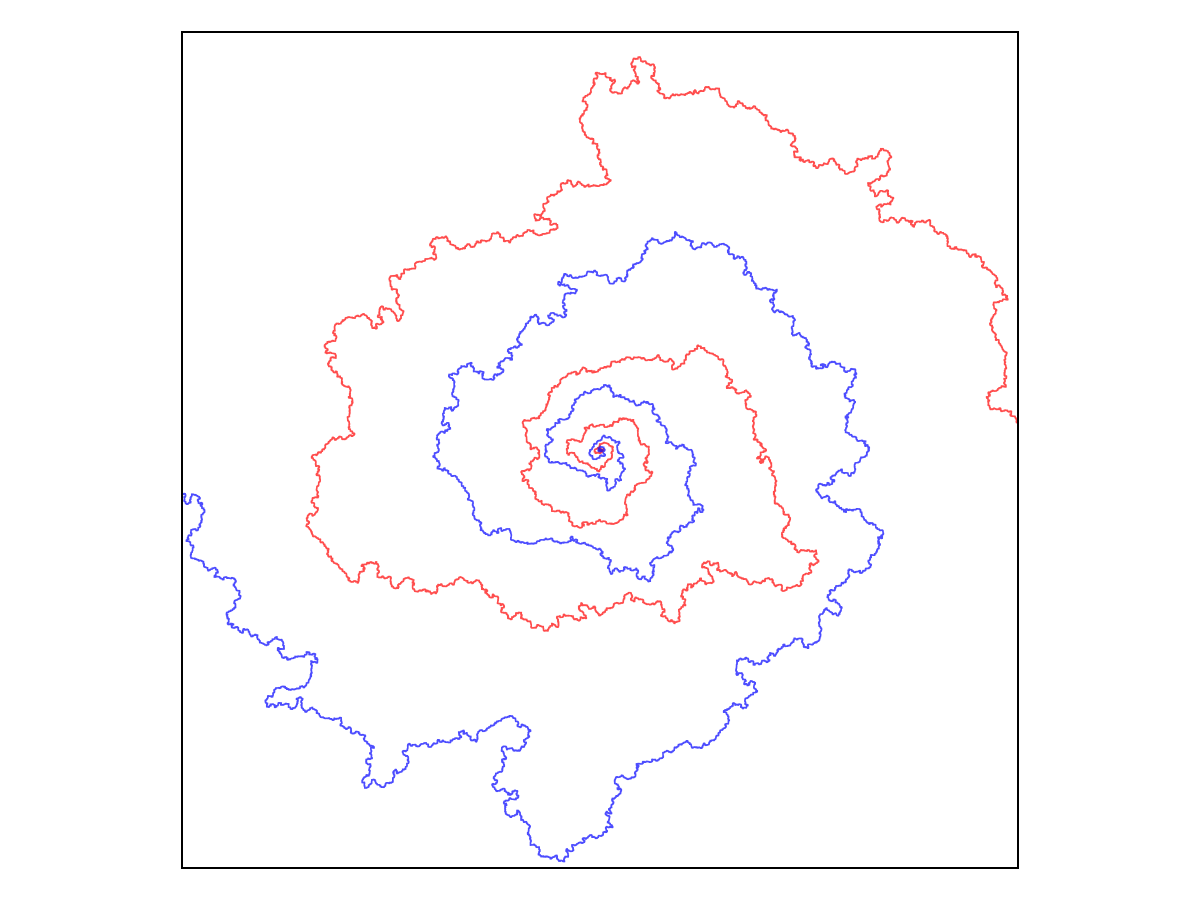}
    \end{center}
    \caption{Simulation by Minjae Park. Two-sided radial SLE around the origin with $\kappa=0.01$ and $\mu=5$ in the left panel. Two-sided radial SLE around the origin with $\kappa=2$ and $\mu=5$ in the right panel.}
    \label{fig::simulation}
\end{figure}

The interchangeability condition \textbf{(INT)} is natural, given the reversibility of chordal SLE. It also simplifies our classification and in particular, the notation, see Section~\ref{subsec::commutation_relation_INT} and, in particular, Lemma~\ref{lem:F_12_equal}. In Remark~\ref{rem::cr_left}, we give an example of locally commuting radial $2$-SLE without \textbf{(INT)}. In Proposition~\ref{prop:remove_int}, we show the classification without the condition \textbf{(INT)}, where the second one-parameter family of solution \eqref{eqn::CR_pf} becomes a two-parameter family.

\subsection{Resampling property}
\label{subsec::intro_resampling}
Let $D \subset \m C$ be a simply connected domain and let $x_1, x_2$ be distinct prime ends of $\partial D$. 
We denote by $\mf X(D; x_1, x_2)$ the set of continuous non-self-crossing curves in $D$ from $x_1$ to $x_2$. For other simply connected domain $(D; x_1, x_2)$, chordal SLE in $(D; x_1, x_2)$ is a random curve in $\mf X(D; x_1, x_2)$ defined by mapping chordal SLE in $(\HH; 0, \infty)$ conformally from $\HH$ onto $D$ sending $0$ to $x_1$ and $\infty$ to $x_2$. 
Let $D \subset \m D$ be a simply connected domain containing $0$  and let $x_1, x_2$ be distinct prime ends of $\partial D$. 
We denote by $\mf X(D; x_1, x_2; 0)$ the set of pairs of continuous simple curves $(\eta^{(1)}, \eta^{(2)})$ in $D$ such that $\eta^{(1)}$ goes from $x_1$ to $0$ and $\eta^{(2)}$ goes from $x_2$ to $0$ and $\eta^{(1)}\cap\eta^{(2)}=\{0\}$. We say that a law on $(\eta^{(1)}, \eta^{(2)})\in\mf X(D; x_1, x_2; 0)$ satisfies \textit{resampling property} if 
\begin{itemize}
    \item the conditional law of $\eta^{(2)}$ given $\eta^{(1)}$ is a chordal $\SLE_{\kappa}$ in $(D\setminus\eta^{(1)}; x_2, 0)$,
    \item and the conditional law of $\eta^{(1)}$ given $\eta^{(2)}$ is a chordal $\SLE_{\kappa}$ in $(D\setminus\eta^{(2)}; x_1, 0)$.
\end{itemize}

In Theorem~\ref{thm::resampling_property} we show that for each $\mu\in\mathbb{R}$, two-sided radial $\SLE_{\kappa}$ with spiraling rate $\mu$ satisfies resampling property for $\kappa\in (0,4]$. This result follows directly from the expression of the partition function $\mc G_\mu$. Another proof can be found in \cite{IG4} using the coupling with GFF. Combining with Theorem~\ref{thm:main}, we find that commutation relation implies resampling property.  

\begin{cor}\label{cor::commutation_resampling}
Fix $\kappa\in (0,4]$ and $\theta_1<\theta_2<\theta_1+2\pi$. 
If an interchangeable and locally commuting 2-radial $\SLE_{\kappa}$ is in $\mf X (\m D;\ee^{\ii\theta_1}, \ee^{\ii\theta_2}; 0)$, then it is a two-sided radial $\SLE_\k$ with spiral, hence, it satisfies the resampling property. The converse is not true: a linear combination of two-sided radial $\SLE_\k$ with different spiraling rates is in $\mf X (\m D;\ee^{\ii\theta_1}, \ee^{\ii\theta_2}; 0)$ and satisfies the resampling property but is not a locally commuting 2-radial $\SLE_\k$.
\end{cor}

\subsection{Semiclassical limits of partition functions} \label{subsec:intro_semicl}

\begin{figure}[ht!]
    \begin{center}    
\includegraphics[width=0.35\textwidth]{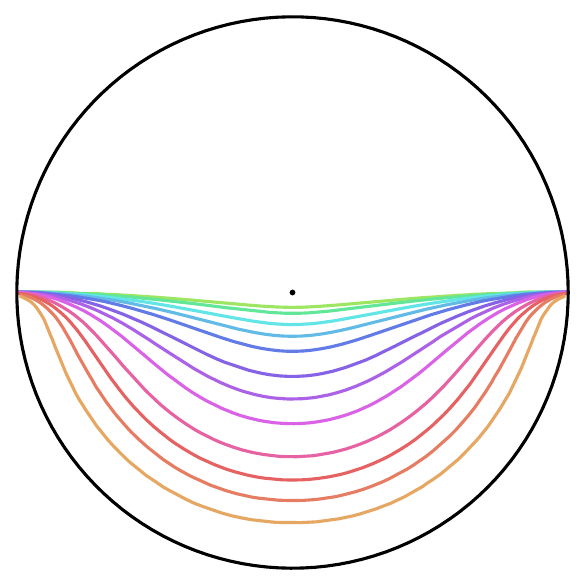}
\includegraphics[width=0.35\textwidth]{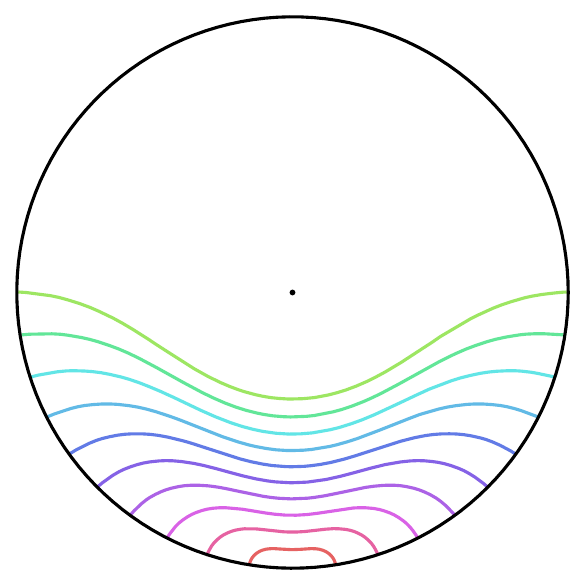}
    \end{center}
    \caption{The curves generated by $\mc U^\l$ from Proposition~\ref{prop::semiclassical_pf}. They correspond to the energy minimizers from Proposition \ref{prop:confRadMinimizer}.
    On the left, $\t_2-\t_1\approx \pi$, and $\l=0.1,\ 0.2,\ 0.5,\ 1,\ 2,\ 5,\ 10,\ 20,\ 50,\ 100,\ 200,\ 500$ ($\l=0.1$ corresponds to the innermost green curve and $\l=500$ corresponds to the outermost orange curve). On the right $\l=10$ while $\t_2-\t_1$ varies.}
    \label{fig::minimizer}
\end{figure}

We also discuss the commutation relation when $\k = 0$,  the corresponding deterministic pair of curves, and the semiclassical limit $\k \to 0\splus$ in Section~\ref{sec:semiclassical}. In particular, the semiclassical limits of partition functions in Theorem~\ref{thm:main} have explicit formulas. 

\begin{prop}\label{prop::semiclassical_pf}
Fix $\theta_1<\theta_2<\theta_1+2\pi$ and denote $\theta=\theta_2-\theta_1$. 
\begin{itemize}
    \item For the partition function $\LG_{\mu}$ in~\eqref{eqn::2SLEspiral_pf}, fix $\mu\in\mathbb{R}$, we have 
    \begin{equation}\label{eqn::classical_mu}
    \lim_{\kappa\to 0}\kappa\log\LG_{\mu}(\theta_1, \theta_2)=2\log\sin\big((\theta_{2}-\theta_1)/2\big)+\mu(\theta_1+\theta_2). 
\end{equation}
\item For the partition function $\LZ_{\alpha}$ in~\eqref{eqn::CR_pf}, if $\alpha\sim -\lambda/\kappa$ for some $\lambda\ge 0$, then the following limit exists  
\begin{align*}
    \mc U^\l (\t)=&\lim_{\kappa\to 0}\kappa\log\LZ_{\alpha}(\theta_1, \theta_2);
\end{align*}
and for $\theta\in (0,\pi]$, 
 \begin{align}\label{eq:U_l_explicit}
   & \mc U^{\lambda}(\theta) 
    =  \mc U^{\lambda}(2\pi-\theta) = -2\log\sin(\theta/2)
    +\int_{\theta}^{\pi}\sqrt{2\lambda+4\cot^2(u/2)}\, \ud u.  
\end{align}
\end{itemize}    
\end{prop}
We prove Proposition~\ref{prop::semiclassical_pf} in Section~\ref{sec:semiclassical}. 
We mention that a similar semiclassical limit of the partition functions for multichordal SLE was considered in \cite{peltola_wang,alberts2020pole} and that multiradial $\SLE_0$ was considered in \cite{AbuzaidHealeyPeltolaLargeDeviationDysonBM, ZhangThesis} (see also Section~\ref{sec:comments}).

\begin{prop}\label{prop::semiclassical_minimizer}
We use the same notations as in Proposition~\ref{prop::semiclassical_pf}. The function $\mc U^\lambda$ can be expressed as
$$\mc U^\lambda (\t) = -6 \log \sin (\t/2) - \inf_\g \Big(I(\g) - \lambda \log \CR(\m D \setminus \g)\Big) + C$$
where the infimum is attained and taken over all chords connecting $\ee^{\ii \t_1}$ and $\ee^{\ii \t_2}$, $I(\cdot)$ is the chordal Loewner energy, and $C$ is a normalizing constant only depending on $\lambda$ such that $\mc U^\lambda (\pi) = 0$. Moreover, the minimizer is unique when $\t \neq \pi$ and there are exactly two minimizers when $\t = \pi$.
\end{prop}
See Lemma~\ref{lem::semiclassical_Zalpha}
for details and Figure~\ref{fig::minimizer} for a simulation of those energy minimizers. A related relation between the semiclassical limit of the partition function for multichordal SLE and the minimizers of the multichordal Loewner energy was proved in \cite{peltola_wang}.
We point out in particular that $\mc U^{\lambda}$ in~\eqref{eq:U_l_explicit} when $\lambda>0$ is not differentiable at $\theta=\pi$ since the minimizer of $I(\cdot) - \lambda \log \CR(\m D \setminus \cdot)$ is non-unique when $\t = \pi$. Such non-differentiable functions do not appear in earlier literature about the semiclassical limits of SLE partition functions~\cite{peltola_wang,alberts2020pole, AbuzaidHealeyPeltolaLargeDeviationDysonBM, ZhangThesis}.

\subsection{Comments and further developments}\label{sec:comments}
Let us first make some comments on the related works to two-sided radial SLE with spiral. Apart from \cite{IG4} mentioned above in the context of imaginary geometry, we believe it to be also related to the mixed multifractal spectrum introduced by Binder \cite{binder1998harmonic}. More precisely, the points on an SLE curve are classified in terms of the asymptotic behavior of the uniformizing conformal maps in the complement of the curve, according to its H\"older exponent and its winding exponent. The Hausdorff dimension (i.e., mixed multifractal spectrum) of the set of points with a given behavior is computed in \cite{duplantier2002harmonic}. 
It was shown in \cite{Lawler_Zhou_natural} that conditioning a chordal SLE to pass through an interior point $0$ gives the two-sided radial SLE (without spiral). Since the points with zero winding exponent have the maximal spectrum, this conditioning is equivalent to conditioning on the event where the $0$ is a point on the SLE curve with zero winding exponent. 
We speculate that ``conditioning'' on the rare event where $0$ is a point with winding rate $\mu$, the curve obtained should satisfy the commutation relation and hence has to be the two-sided radial SLE that with the spiraling rate $\mu$. In the terminology of the conformal field theory, this would correspond to inserting a curve-generating operator at $0$ with complex
charges and conformal weights \cite{Belikov08}.

There are also a few recent works built upon this article.
A follow-up paper~\cite{HuangPeltolaWuMultiradialSLEResamplingBP} 
generalizes the first class in Theorem~\ref{thm:main}---two-sided radial SLE with spiral---to the multi-sided case and derives its resampling and boundary perturbation properties. 
Another follow-up paper~\cite{FengWu2024} generalizes the second class in Theorem~\ref{thm:main} to the multi-chordal case and connects the corresponding partition functions with $\alpha=\alpha_1(\kappa)$ to random-cluster model in polygon conditional on the one-arm event. 
Recent papers \cite{ZhangThesis, ZhangKappaPos} of J. Zhang continue the investigation of the radial commutation by studying locally commuting $n$-radial SLE and the corresponding radial BPZ equations.Zhang constructs solutions to these radial BPZ equations using  Coulomb gas formalism and relates these solutions to quantum Calogero--Sutherland system in conformal field theory.

Regarding related works on semiclassical limits of SLEs, apart from those mentioned in Section~\ref{subsec:intro_semicl}, we also mention that the resampling property in the deterministic $\k = 0$ case is equivalent to the \emph{geodesic property} studied in \cite{MRW1,Janne} (and the two-sided radial SLE$_0$ is a geodesic pair in their terminology). See also~\cite{MRW2,peltola_wang,Bonk_Eremenko} for other occurrences of geodesic property. Zhang also studied the $\k = 0$ dynamics of multiple radial SLE$_0$ \cite{ZhangThesis} in relation to the classical Calogero--Sutherland system (see also \cite{alberts2020pole} in the multichordal case).

\section{Partition functions of locally commuting 2-radial SLE}\label{sec:partition}

In this section, we give the precise definition of locally commuting $2$-radial SLE. We also define and classify the partition functions. 

\subsection{Definition of locally commuting 2-radial SLE}\label{sec:axioms}

 Let $D \subset \m D$  be a simply connected domain containing $0$ and let $x_1, x_2$ be distinct prime ends of $D$.  Let $U_1, U_2$ be, respectively, closed neighborhoods of $x_1$ and $x_2$ in $D$ that do not contain $0$ and such that $U_1 \cap U_2 = \emptyset$. 
We will consider probability measures $\m P_{(D;\,x_1,x_2)}^{(U_1, U_2)}$ on pairs of unparametrized continuous curves in $U_1$ and $U_2$ starting from $x_1$ and $x_2$, and exiting $U_1$ and $U_2$ almost surely.
We call that such a family of measures indexed by different choices of $(U_1, U_2)$ \emph{compatible} if for all $U_1 \subset U_1'$ and $U_2 \subset U_2'$, we have
$\m P_{(D;\,x_1,x_2)}^{(U_1, U_2)}$ is obtained from restricting the curves under $\m P_{(D;\,x_1,x_2)}^{(U_1', U_2')}$ to the part before first exiting the subdomains $U_1$ and $U_2$.

The \emph{locally commuting $2$-radial} $\SLE_\k$ is a compatible family of measures $\m P_{(D;\,x_1,x_2)}^{(U_1, U_2)}$  on pairs of continuous non-self-crossing curves $(\eta^{(1)}, \eta^{(2)})$ for all $D$, $(x_1, x_2)$, and $(U_1, U_2)$ as above that satisfy additionally the \textbf{(CI),(DMP),(MARG)} conditions below. 
 We say that a locally commuting $2$-radial $\SLE_\k$ is \emph{interchangeable} if it satisfies further the condition \textbf{(INT)}.
 
\begin{enumerate}
    \item[\textbf{(CI)}] \textbf{Conformal invariance:} If $\varphi : D \to D'$ is a conformal map fixing $0$, then the pullback measure
    $$\varphi^* \m P_{(D';\,\varphi(x_1),\varphi(x_2))}^{(\varphi(U_1), \varphi(U_2))} = \m P_{(D;\,x_1,x_2)}^{(U_1, U_2)}. $$
    Therefore, it suffices to describe the measure when $(D;\,x_1, x_2) = (\m D;\, \ee^{\ii \t_1}, \ee^{\ii \t_2})$.
    \label{as:CI}
    \item[\textbf{(DMP)}] \textbf{Domain Markov property:} Let $(\eta^{(1)}, \eta^{(2)})\sim \m P_{(\m D;\, \ee^{\ii \t_1}, \ee^{\ii \t_2})}^{(U_1, U_2)}$ and we parametrize $\eta^{(1)}$ and $\eta^{(2)}$ by their own capacity in $\m D$. Let $\mbt = (t_1,t_2)$, such that $t_j$ is a stopping time for $\eta^{(j)}$ and $\eta^{(j)}_{[0,t_j]}$ is contained in the interior of $U_j$. Let 
    $$\tilde {U_j} = U_j \setminus \eta^{(j)}_{[0,t_j]}, \quad \tilde \eta^{(j)} = \eta^{(j)} \setminus \eta^{(j)}_{[0,t_j]},\quad j=1,2; \quad \tilde D = \m D \setminus (\eta^{(1)}_{[0,t_1]} \cup \eta^{(2)}_{[0,t_2]}).$$ 
    Then conditionally on $\eta^{(1)}_{[0,t_1]} \cup \eta^{(2)}_{[0,t_2]}$, we have 
    $$(\tilde \eta^{(1)}, \tilde \eta^{(2)})\sim \m P_{(\tilde  D;\, \eta^{(1)}_{t_1}, \eta^{(2)}_{t_2})}^{(\tilde U_1, \tilde U_2)}.$$ 
    See Figure~\ref{fig::radial21}. 
    \label{as:DMP} 

\begin{figure}[ht!]
    \begin{center}    \includegraphics[width=0.8\textwidth]{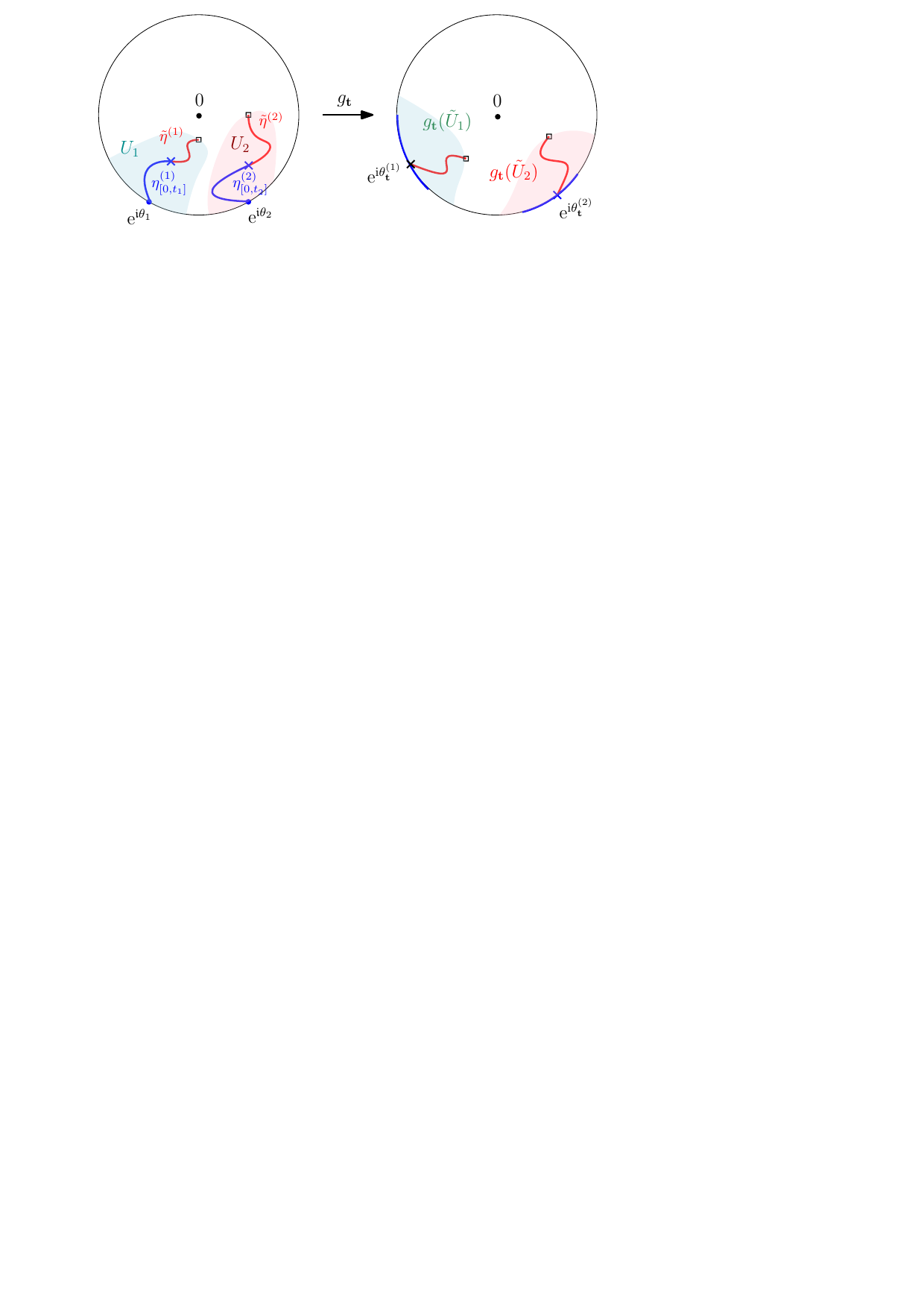}
    \end{center}
    \caption{\textbf{(CI)} and \textbf{(DMP)} imply that a locally commuting 2-radial SLE satisfies $(\tilde \eta^{(1)}, \tilde \eta^{(2)})\sim \m P_{(\tilde  D;\, \eta^{(1)}_{t_1}, \eta^{(2)}_{t_2})}^{(\tilde U_1, \tilde U_2)} \sim g_\mbt^* \, \m P_{(\m D; \ee^{\ii \t^{(1)}_\mbt},\ee^{\ii \t^{(2)}_\mbt} )}^{(g_\mbt (\tilde U_1), g_\mbt (\tilde U_2))}.$}
    \label{fig::radial21}
\end{figure}
    
    \item[\textbf{(MARG)}]     \label{as:MARG}
 \textbf{Marginal laws:} Let $(\eta^{(1)}, \eta^{(2)})\sim \m P_{(\m D;\, \ee^{\ii \t_1}, \ee^{\ii \t_2})}^{(U_1, U_2)}$. We assume that there exist $C^2$ functions $b_j : S^1 \times S^1 \setminus \D \to \m R$ where $S^1 = \m R/2\pi \m Z$, and $\D$ is the diagonal $\{(\t, \t) \, |\, \t \in S^1\}$ such that the capacity parametrized Loewner driving function $t \mapsto \xi^{(1)}_{t}$ of $\eta^{(1)}$ satisfies
 \begin{equation}\label{eq:marg_1}
        \begin{cases}
            \xi^{(1)}_{0}  = \t_1, \quad    V^{(2)}_{0} = \t_2,  \\
            \dd \xi^{(1)}_{t}  = \sqrt \k \,\dd B_t^{(1)} + b_1 \left(\xi^{(1)}_{t}, V^{(2)}_{t}\right) \dd t, \\
            \dd V^{(2)}_{t} = \cot \left((V^{(2)}_{t} - \xi^{(1)}_{t})/2\right) \dd t,
        \end{cases}
    \end{equation}
    where $B^{(1)}$ is one-dimensional standard Brownian motion. 
    In other words, the radial Loewner chain $g^{(1)}_t$ 
    associated with $\eta^{(1)}$ maps the tip $\eta^{(1)}_t$ to $\exp (\ii \xi^{(1)}_t)$ and $\ee^{\ii\theta_2}$ to $\exp (\ii V^{(2)}_t)$ by \eqref{eq::loewner_boundary}.

    Similarly, the capacity parametrized Loewner driving function $t \mapsto \t^{(2)}_{t}$ of $\eta^{(2)}$ satisfies
    \begin{equation}\label{eq:marg_2}
        \begin{cases}
            V^{(1)}_0 = \t_1, \quad    \xi^{(2)}_0 = \t_2,  \\
            \dd \xi^{(2)}_t  =  \sqrt \k \,\dd B_t^{(2)} + b_2 (V^{(1)}_t, \xi^{(2)}_t) \dd t, \\
            \dd V^{(1)}_t  = \cot \left((V^{(1)}_t - \xi^{(2)}_t)/2\right) \dd t.
        \end{cases}
    \end{equation}
     In other words, the radial Loewner chain $g^{(2)}_t$ 
    associated with $\eta^{(2)}$ maps the tip $\eta^{(2)}_t$ to $\exp (\ii \xi^{(2)}_t)$ and $\ee^{\ii\theta_1}$ to $\exp (\ii V^{(1)}_t)$.
    \item[\textbf{(INT)}] \textbf{Interchangeability condition:}
The two curves are unordered. In other words, let $\tau : (\eta^{(1)}, \eta^{(2)}) \mapsto (\eta^{(2)}, \eta^{(1)})$,
    $$\m P_{(D;\,x_1,x_2)}^{(U_1, U_2)} \sim \tau^* \m P_{(D;\,x_2,x_1)}^{(U_2, U_1)}.$$
    \end{enumerate}

\begin{remark}
    Despite the heavy notation, the only purpose of introducing the neighborhoods $U_1, U_2$ is to give a precise meaning of ``local law'' of the $2$-radial SLE near their starting points. 
In particular, if we have a random pair of curves in $\mf X (\m D; x_1, x_2; 0)$, or a random simple curve in $\mf X (\m D; x_1, x_2)$, we obtain a compatible family of probability measures $\m P_{(\m D; x_1, x_2)}^{(U_1, U_2)}$ by restricting the random curve (or the pair of random curve) to all possible pairs of neighborhoods $(U_1, U_2)$.
Note that a priori, a compatible family of local laws does not necessarily imply they can be coupled as the restriction of a random curve or pair of curves in all $(U_1, U_2)$, but this will be a consequence of our classification Theorem~\ref{thm:main}.

To simplify notations, we also denote $\m P_{(\m D; \,\ee^{\ii\theta_1}, \ee^{\ii\theta_2})}^{(U_1, U_2)}$ by $\m P_{(\theta_1, \theta_2)}$ as in Section~\ref{subsec::intro_axioms}. 
\end{remark}

These axioms allow us to characterize the local law of the $2$-radial SLE by considering the infinitesimal generators of the two-time driving function 
which is the subject of the next sections.


\subsection{Commutation relations}
\label{subsec::commutation_relations}

In this section, we assume that $\k \in (0,\infty)$ and do not assume \textbf{(INT)}. We consider a \emph{locally commuting $2$-radial} $\SLE_\k$, which is a compatible family of measures $\m P_{(\t_1, \t_2)}$
satisfying the conditions \textbf{(CI), (DMP)}, and \textbf{(MARG)}.
Let $b_1, b_2 : S^1 \times S^1 \setminus \D \to \m R$ be $C^2$ functions as in the condition  \textbf{(MARG)}.

We now derive the infinitesimal form of the radial commutation relation. 
Let
\begin{align}\label{eq:gen}
\begin{split}
    \mc L_1 & = \frac{\k}{2} \partial_{11} + b_1 (\t_1, \t_2) \, \partial_1 + \cot \left( \frac{\t_2 - \t_1 }{2}\right) \partial_2 \\
     \mc L_2 & = \frac{\k}{2} \partial_{22} + b_2 (\t_1, \t_2) \, \partial_2 + \cot \left( \frac{\t_1 - \t_2 }{2}\right) \partial_1
\end{split}
\end{align}
be the diffusion generators associated with \eqref{eq:marg_1} and \eqref{eq:marg_2}. 

\begin{prop}\label{prop:radial_comm} 
The diffusion generators~\eqref{eq:gen} of a locally commuting 2-radial $\SLE_\k$ satisfies the \emph{infinitesimal commutation relation}
\begin{equation}\label{eq:comm_cond}
    \comm{\mc L_1, \mc L_2} := \mc L_1 \mc L_2 - \mc L_2 \mc L_1= \frac{\mc L_2 - \mc L_1 }{\left(\sin \left((\t_2 - \t_1)/2\right)\right)^2}. 
\end{equation}
\end{prop}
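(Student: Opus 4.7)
The plan is to derive the commutation relation by expanding, up to the cross-order $t_1 t_2$, the function
\begin{equation*}
u(t_1, t_2) := \mathbb{E}^{(\theta_1,\theta_2)}\bigl[f(\theta^{(1)}_\mbt, \theta^{(2)}_\mbt)\bigr]
\end{equation*}
for a smooth test function $f$ on $S^1 \times S^1 \setminus \D$, computing the expansion in two different growth orders via \textbf{(DMP)} and equating. The linear and purely diagonal quadratic terms agree trivially (they equal $t_j \mc L_j f$ and $\tfrac12 t_j^2 \mc L_j^2 f$), so all the nontrivial information lies in the cross-term.

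Grow $\eta^{(1)}$ first by its own capacity $t_1$ in $\m D$, then $\eta^{(2)}$. By \textbf{(DMP)}, conditionally on $\eta^{(1)}_{[0,t_1]}$, the pair $(g^{(1)}_{t_1}(\eta^{(1)}_{\ge t_1}), g^{(1)}_{t_1}(\eta^{(2)}))$ is a fresh locally commuting 2-radial SLE$_\k$ starting from $(\xi^{(1)}_{t_1}, V^{(2)}_{t_1})$. The image $g^{(1)}_{t_1}(\eta^{(2)}_{[0, t_2]})$ is a small boundary hull at $e^{\ii V^{(2)}_{t_1}}$ whose capacity in the new disc equals $R_1(t_1)\, t_2 + O(t_2^{3/2})$, with $R_1(t_1) := \lvert g^{(1)}_{t_1}{}'(e^{\ii \theta_2})\rvert^2$ the conformal distortion factor at the boundary. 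Differentiating the boundary Loewner equation~\eqref{eq::loewner_boundary} yields
\begin{equation*}
\partial_t \log R_1(t) = -\frac{1}{\sin^2\!\bigl((V^{(2)}_t - \xi^{(1)}_t)/2\bigr)},
\end{equation*}
hence $R_1(t_1) = 1 - t_1/\sin^2((\theta_2 - \theta_1)/2) + O(t_1^2)$. Applying the semigroup $e^{R_1(t_1) t_2 \mc L_2}$ governing $\eta^{(2)}$-only growth in the new disc at the state $(\xi^{(1)}_{t_1}, V^{(2)}_{t_1})$, and then taking the expectation along the $\eta^{(1)}$-only diffusion (generator $\mc L_1$), gives
\begin{equation*}
u(t_1, t_2) = f + t_1 \mc L_1 f + t_2 \mc L_2 f + \tfrac{t_1^2}{2}\mc L_1^2 f + \tfrac{t_2^2}{2}\mc L_2^2 f + t_1 t_2\, \mc L_1 \mc L_2 f - \frac{t_1 t_2}{\sin^2((\theta_2-\theta_1)/2)}\,\mc L_2 f + o(|\mbt|^2),
\end{equation*}
where the distortion correction comes from the $(R_1(t_1) - 1)$-piece in the semigroup expansion.

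The symmetric order (grow $\eta^{(2)}$ first, then $\eta^{(1)}$) produces the same expansion but with $\mc L_1 \mc L_2 f$ replaced by $\mc L_2 \mc L_1 f$ and with the distortion term replaced by $-\tfrac{t_1 t_2}{\sin^2((\theta_2-\theta_1)/2)}\mc L_1 f$ (the analogous factor $R_2(t_2) = \lvert g^{(2)}_{t_2}{}'(e^{\ii\theta_1})\rvert^2$ satisfies the same leading asymptotics since $\sin^2$ is symmetric in its arguments). Equating the $t_1 t_2$ coefficients for arbitrary smooth $f$ immediately yields the operator identity $[\mc L_1, \mc L_2] = (\mc L_2 - \mc L_1)/\sin^2((\theta_2 - \theta_1)/2)$. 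The main technical obstacle is to rigorously bound the Taylor remainders: one must verify that both the stochastic fluctuation of $R_1(t_1)$ around its leading-order deterministic expansion, and the $O(t_2^{3/2})$ error in the boundary capacity-scaling formula, contribute only $o(t_1 t_2)$ to $u$. This is standard once one invokes the $C^2$-regularity of $b_1, b_2$ (so that $\mc L_i \mc L_j f$ is continuous) and the classical boundary-derivative estimates for the radial Loewner flow started from a smooth driving function.
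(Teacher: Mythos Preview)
Your proof is correct and follows essentially the same approach as the paper's. The paper sets $t_1 = t_2 = \varepsilon$ and compares the $\varepsilon^2$ coefficients of the two expansions of $\m E_{(\theta_1,\theta_2)}[F(\theta^{(1)}_{(\varepsilon,\varepsilon)},\theta^{(2)}_{(\varepsilon,\varepsilon)})]$, whereas you keep $t_1,t_2$ separate and isolate the cross-term $t_1 t_2$ directly; the key capacity-distortion computation $|(g^{(1)}_{t_1})'(\ee^{\ii\theta_2})|^2 = 1 - t_1/\sin^2((\theta_2-\theta_1)/2) + O(t_1^2)$ and the use of \textbf{(DMP)} to compose the semigroups are identical in both arguments.
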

The proof follows exactly the same steps as in \cite{Dub_comm} for chordal SLEs.  The radial commutation relation was stated briefly in \cite{Dub_comm} but with an opposite sign. For the reader's convenience, we derive it here. We introduce the following notations. They will also be used in Section~\ref{subsec::2SLEspiral}. 

Fix $\theta_1<\theta_2<\theta_1+2\pi$. We will describe the growth of a pair of continuous non-self-crossing curves $(\eta^{(1)}, \eta^{(2)})$ in $\m D$ such that $\eta^{(1)}_0=\ee^{\ii\theta_1}$ and $\eta^{(2)}_0=\ee^{\ii\theta_2}$. For $\mbt=(t_1, t_2)\in \mathbb{R}_{+}^2$, suppose $\eta^{(1)}_{[0,t_1]}$ and $\eta^{(2)}_{[0,t_2]}$ are disjoint. We consider the following mapping-out functions: 
\begin{itemize}
    \item $g^{(j)}_{t_j}: \m D\setminus\eta^{(j)}_{[0,t_j]}\to \m D$ is conformal with $g^{(j)}_{t_j}(0)=0$ and $\left(g^{(j)}_{t_j}\right)'(0)=\exp(t_j)>0$, for $j=1,2$. 
    \item $g_{\mbt}: \m D\setminus \left(\eta^{(1)}_{[0,t_1]}\cup\eta^{(2)}_{[0,t_2]}\right)\to \m D$ is conformal with $g_{\mbt}(0)=0$ and $g_{\mbt}'(0)
    >0$. 
    \item $g_{\mbt, 1}: \m D\setminus g^{(1)}_{t_1}\left(\eta^{(2)}_{[0,t_2]}\right)\to \m D$ is conformal with $g_{\mbt, 1}(0)=0$ and $g_{\mbt, 1}'(0)>0$.
    \item $g_{\mbt, 2}: \m D\setminus g^{(2)}_{t_2}\left(\eta^{(1)}_{[0,t_1]}\right)\to \m D$ is conformal with $g_{\mbt, 2}(0)=0$ and $g_{\mbt, 2}'(0)>0$.
\end{itemize}
Using such notations, we have $g_{\mbt}=g_{\mbt, j}\circ g^{(j)}_{t_j}$ for $j=1,2$. Let $\phi^{(j)}_{t_j}, \phi_{\mbt}, \phi_{\mbt, j}$ be the covering maps of $g^{(j)}_{t_j}, g_{\mbt}, g_{\mbt, j}$ respectively, i.e., the continuous function  such that
$g_\cdot (\ee^{\ii \t}) = \ee^{\ii\phi_\cdot (\t)}$ and $\phi_0 (\t) = \t$.
Denote by $(\xi^{(j)}_{t_j}, t_j\ge 0)$ the driving function of $\eta^{(j)}$ as a radial Loewner chain for $j=1,2$. Let 
\[\theta^{(1)}_{\mbt}=\phi_{\mbt, 1}\left(\xi^{(1)}_{t_1}\right), \qquad\theta^{(2)}_{\mbt}=\phi_{\mbt, 2}\left(\xi^{(2)}_{t_2}\right).\]
The pair $\mbt \mapsto (\theta^{(1)}_{\mbt},\theta^{(2)}_{\mbt})$ may be viewed as the two-time driving function of the pair $(\eta^{(1)}, \eta^{(2)})$. See Figure~\ref{fig::gtcommutation}.

\begin{figure}[ht!]
    \begin{center}    \includegraphics[width=0.8\textwidth]{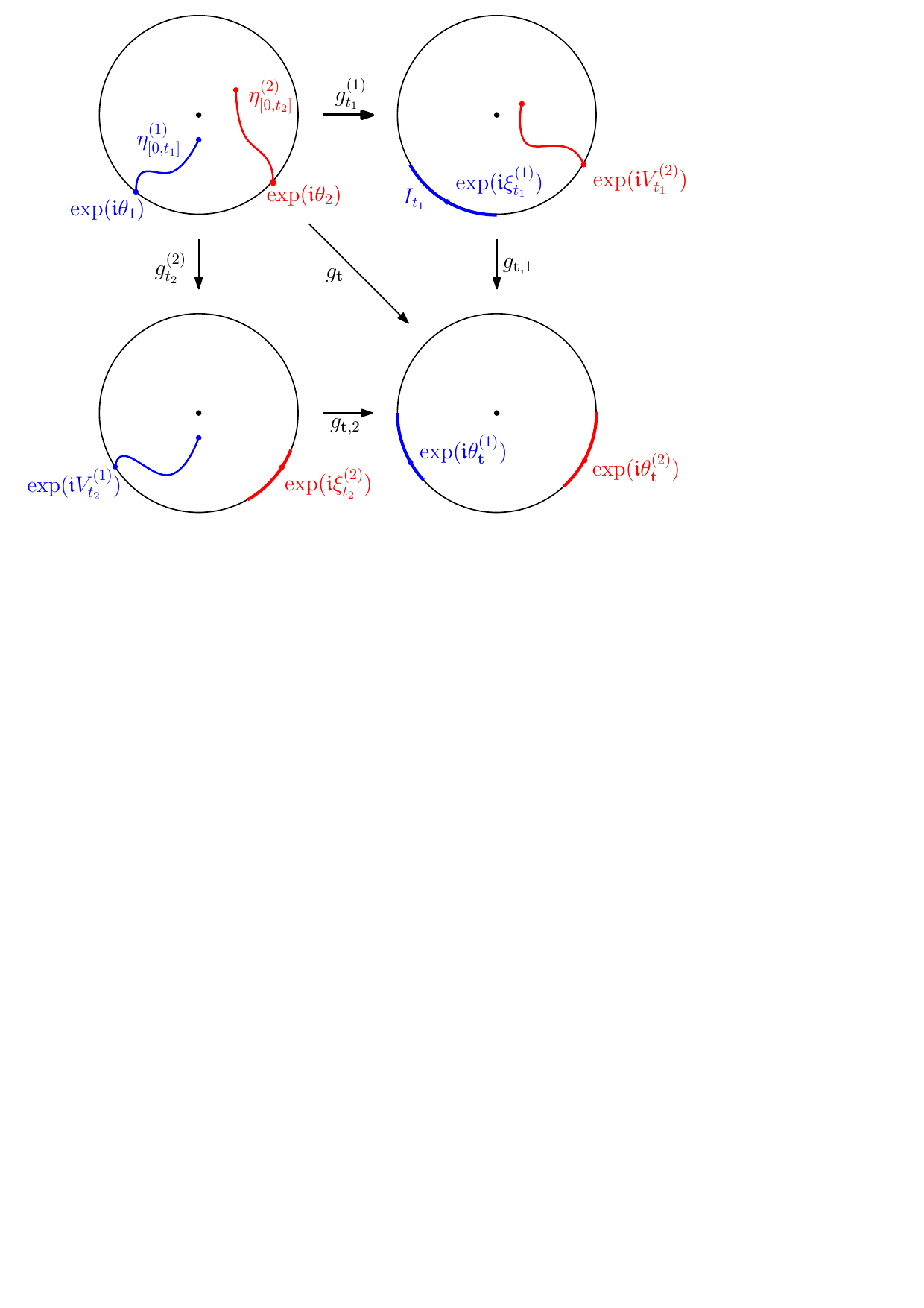}
    \end{center}
    \caption{We have $g_{\mbt}=g_{\mbt, 1}\circ g_{t_1}^{(1)}=g_{\mbt, 2}\circ g_{t_2}^{(2)}$.}
    \label{fig::gtcommutation}
\end{figure}

\begin{proof}[Proof of Proposition~\ref{prop:radial_comm}] 
The strategy of the proof consists of mapping out $\left(\eta^{(1)}_{[0,\vare]}, \eta^{(2)}_{[0,\vare]}\right)$ in two ways, where both curves are parametrized by their intrinsic capacity seen in $\m D$.
We can either 
\begin{itemize}
    \item first map out $\eta^{(1)}_{[0,\vare]}$ by the conformal map $g_\vare^{(1)}$, then by $g_{(\vare, \vare),1}$,
    \item or first map out $\eta^{(2)}_{[0,\vare]}$ by the conformal map $g_\vare^{(2)}$, then by $g_{(\vare, \vare),2}$.
\end{itemize}
We then compare the expansions of $(\t_{(\vare,\vare)}^{(1)}, \t_{(\vare,\vare)}^{(2)})$ in $\vare$ which are expressed in terms of $\mc L_1$ and $\mc L_2$.
 
 More precisely, we first follow the Loewner flow $t \mapsto g_{t}^{(1)}$ until  $t = \vare$. 
    Under this flow,  the radial driving function is $t \mapsto \xi_{t}^{(1)}$. And  $t\mapsto V_t^{(2)}$ satisfies
    $$\partial_t V_t^{(2)} = \cot \left( (V_t^{(2)}  - \xi_{t}^{(1)})/2\right).$$
Hence, for any smooth test function $F \colon (\t_1, \t_2) \mapsto \m R$, we have
$$\m E_{(\t_1, \t_2)} \left[F (\xi_\vare^{(1)}, V_\vare^{(2)})\right] = \left(1 + \vare \mc L_1  + \frac{\vare^2\mc L_1^2}{2} \right) F (\t_1, \t_2) + o (\vare^2).$$

    Now we use $g_{(\vare,\vare),1}$ to map out $\tilde \eta^{(2)}_{[0,\vare]} : =  g_{\vare}^{(1)} (\eta_{[0,\vare]}^{(2)})$. We note that the capacity of $\tilde \eta^{(2)}_{[0,\vare]}$ is not $\vare$.
  We compute its capacity, we note that
    $$\partial_t \left(g_t^{(1)}\right)'(z)|_{t = 0} =  - \frac{z+\ee^{\ii \t_1}}{z - \ee^{\ii \t_1}} + z \frac{2\ee^{\ii \t_1}}{(z - \ee^{\ii \t_1})^2} = - \frac{z^2 - \ee^{2\ii \t_1} - 2 z \ee^{\ii \t_1}}{(z - \ee^{\ii \t_1})^2}.$$
    Therefore, for small $\vare$,
    \begin{align*}
    \left(g_\vare^{(1)}\right)' (\ee^{\ii \t_2}) 
    & = 1 - \frac{\vare}{2 \left(\sin ((\t_2-\t_1)/2)\right)^2}  +  \ii \vare \cot  ((\t_2-\t_1)/2) + O (\vare^2).
    \end{align*}
    So we have
    $$\abs{\left(g_\vare^{(1)}\right)'(\ee^{\ii\t_2})} = 1 - \frac{\vare}{2 \left(\sin ((\t_2-\t_1)/2)\right)^2} + o (\vare).$$
    It follows from \eqref{eq:cap_change} that the image under $g_\vare^{(1)}$ of a set of capacity $\vare$ near $\ee^{\ii \t_2}$ has capacity  
    $$\vare' = \vare \left(\abs{\left(g_\vare^{(1)}\right)'(\ee^{\ii\t_2})}^2 + o (\vare) \right) = \vare \left(1 - \frac{\vare}{\left(\sin ((\t_2-\t_1)/2)\right)^2} \right) + o (\vare^2).$$
    In particular, $\tilde \eta^{(2)}_{[0,\vare]}$  has capacity $\vare'$.
Therefore, using the conditions \textbf{(CI)} and \textbf{(DMP)}, we have
\begin{align}
   & \m E_{(\t_1,\t_2)} \left[F (\t_{(\vare,\vare)}^{(1)},\t_{(\vare,\vare)}^{(2)}) \right] \label{eq:exp_double_vare}\\
   = & \left(1 + \vare \mc L_1  + \frac{\vare^2\mc L_1^2}{2} \right) \left(1 + \vare' \mc L_2  + \frac{(\vare')^2\mc L_2^2}{2} \right) F (\t_1, \t_2) + o (\vare^2) \nonumber\\
   = & \left(1 + \vare  (\mc L_1 + \mc L_2) + \left(- \vare \d \mc L_2   + \frac{\vare^2\mc L_1^2}{2}  + \frac{\vare^2\mc L_2^2}{2} + \vare^2 \mc L_1 \mc L_2  \right)\right) F (\t_1, \t_2) + o (\vare^2) \nonumber
\end{align}
where $\d =  \vare \left(\sin ((\t_2 - \t_1)/2)\right)^{-2}$ so that $\vare' = \vare (1 -\d)+o(\vare^2)$.

If we first map out the second curve, then the first curve, and notice that the value of $\d$ is unchanged, we obtain that the above expectation also equals
$$\left(1 + \vare  (\mc L_1 + \mc L_2) + \left(- \vare \d \mc L_1   + \frac{\vare^2\mc L_1^2}{2}  + \frac{\vare^2\mc L_2^2}{2} + \vare^2 \mc L_2 \mc L_1  \right)\right) F (\t_1, \t_2) + o (\vare^2). $$
Comparing these two expansions, the coefficient of the $\vare^2$-order terms have to coincide, we obtain the condition
$$
    \comm{\mc L_1, \mc L_2} := \mc L_1 \mc L_2 - \mc L_2 \mc L_1= \frac{\d}{\vare} (\mc L_2 - \mc L_1 )= \frac{\mc L_2 - \mc L_1 }{\left(\sin ((\t_2- \t_1)/2)\right)^2} 
$$
as claimed.
\end{proof}


We note that the condition \textbf{(CI)} implies
     \begin{equation}\label{eq:rot}
    b_j (\t_1 + a, \t_2 + a) = b_j(\t_1, \t_2), \qquad \forall j = 1,2 \text{ and } a\in \m R.
    \end{equation}

\begin{prop}[Radial BPZ equations]\label{prop:part_wo_int}
Let $\k \in (0,\infty)$.
Let $b_1, b_2 : S^1 \times S^1 \setminus \D \to \m R$ be $C^2$ functions as in the condition  \textbf{(MARG)}.  Then~\eqref{eq:comm_cond} and~\eqref{eq:rot} imply that there exists $\mc Z : \{(\t_1, \t_2) \in \m R^2 \,|\, \t_1 < \t_2 < \t_1 +2 \pi\} \to \m R_{>0}$, called \emph{partition function}, and a constant $F\in\mathbb{R}$ such that 
\[b_j = \k \partial_j \log \mc Z, \quad j = 1,2, \]
and
\begin{align}
\label{eqn::BPZ1_unified}
    \frac{\kappa}{2}\frac{\partial_{11}\mc Z}{\mc Z}+\cot(\theta_{21}/2)\frac{\partial_2 \mc Z}{\mc Z}-\frac{h}{2\left(\sin(\theta_{21}/2)\right)^2}=&F,\\
    \frac{\kappa}{2}\frac{\partial_{22}\mc Z}{\mc Z}-\cot(\theta_{21}/2)\frac{\partial_1 \mc Z}{\mc Z}-\frac{h}{2\left(\sin(\theta_{21}/2)\right)^2}=&F,
\label{eqn::BPZ2_unified}
\end{align}
where $h=(6-\kappa)/(2\kappa)$ and $\theta_{21}=\theta_2-\theta_1=-\theta_{12}$. 
\end{prop}

We note that  $\mc Z$ does not always descend to a function on $(\m R / 2\pi \m Z)^2$. The radial BPZ equations (or BPZ--Cardy equations) were also derived in the context of conformal field theory in~\cite{KangMakarov2012,KangMakarovGFFCFT}.

\begin{proof}
    We use the expression~\eqref{eq:gen} and obtain
    \begin{align*}
   \comm{\mc L_1, \mc L_2} & = \Bigg[ - \frac{\k}{2 \left(\sin (\t_{12}/2)\right)^2} \Bigg] \partial_{11} +  \Bigg[ \k \partial_1 b_2 - \k \partial_2 b_1 \Bigg] \partial_{12} + \Bigg[ \frac{\k}{2 \left(\sin (\t_{12}/2)\right)^2} \Bigg] \partial_{22} \\
    + \Bigg[ \frac{\k - 2}{4} &\frac{\cot (\t_{12}/2)}{\left(\sin(\t_{12}/2)\right)^2} - \frac{b_1}{2 \left(\sin (\t_{12}/2)\right)^2}  - \left(\frac{\k}{2} \partial_{22}b_1 +b_2 \partial_2 b_1  + \cot (\t_{12}/2) \partial_1 b_1 \right)\Bigg] \partial_1 \\
     +\Bigg[ \frac{\k - 2}{4} &\frac{ \cot (\t_{12}/2)}{\left(\sin(\t_{12}/2)\right)^2} + \frac{b_2}{2 \left(\sin (\t_{12}/2)\right)^2}  + \left(\frac{\k}{2} \partial_{11}b_2 +b_1 \partial_1 b_2  - \cot (\t_{12}/2) \partial_2 b_2 \right)\Bigg] \partial_2,
\end{align*}
and
\begin{align*}
    \frac{\mc L_2 - \mc L_1}{\left(\sin (\t_{12}/2)\right)^2} = \frac{1}{\left(\sin (\t_{12}/2)\right)^2}\Bigg[\frac{\k}{2} (\partial_{22} - \partial_{11}) + ( \cot (\t_{12}/2) -b_1 ) \partial_1 + ( b_2 +  \cot (\t_{12}/2) ) \partial_2 \Bigg].
\end{align*}
Comparing the coefficients, then~\eqref{eq:comm_cond} shows
\begin{align}
     & \k\partial_1 b_2  = \k\partial_2 b_1, \label{eq:b_comm}\\
      &\frac{\k}{2} \partial_{22}b_1 +b_2 \partial_2 b_1  + \cot (\t_{12}/2) \partial_1 b_1  - \frac{b_1}{2 \left(\sin(\t_{12}/2)\right)^2}   + \left(\frac{6 - \k}{4}\right) \frac{\cot (\t_{12}/2)}{\left(\sin(\t_{12}/2)\right)^2}  = 0,\label{eqn::comm_b1}\\
 &\frac{\k}{2} \partial_{11}b_2 +b_1 \partial_1 b_2  + \cot (\t_{21}/2) \partial_2 b_2  - \frac{b_2}{2 \left(\sin(\t_{21}/2)\right)^2}  + \left(\frac{6- \k}{4}\right) \frac{\cot (\t_{21}/2)}{\left(\sin(\t_{21}/2)\right)^2}  = 0.\label{eqn::comm_b2}
 \end{align}

Eq.~\eqref{eq:b_comm} shows that there exists a function $\mc Z : \{(\t_1, \t_2) \in \m R^2 \,|\, \t_1 < \t_2 < \t_1 +2 \pi\} \to \m R_{>0}$ such that
$$ b_1 = \k \, \partial_1 \log \mc Z = \k \frac{\partial_1 \mc Z}{\mc Z}, \quad b_2 = \k\, \partial_2 \log \mc Z= 
\k \frac{\partial_2 \mc Z}{\mc Z}.$$
Plugging it into~\eqref{eqn::comm_b1} and~\eqref{eqn::comm_b2}, we have
\begin{align}
\kappa\partial_1\left(\frac{\kappa}{2}\frac{\partial_{22}\mc Z}{\mc Z}-\cot(\theta_{21}/2)\frac{\partial_1 \mc Z}{\mc Z}-\frac{h}{2\left(\sin(\theta_{21}/2)\right)^2}\right) & = 0,
\label{eqn::partial1BPZ2}\\
\label{eqn::partial2BPZ1}
\kappa\partial_2\left(\frac{\kappa}{2}\frac{\partial_{11}\mc Z}{\mc Z}+\cot(\theta_{21}/2)\frac{\partial_2 \mc Z}{\mc Z}-\frac{h}{2\left(\sin(\theta_{21}/2)\right)^2}\right)& = 0,
\end{align}
where $h=(6-\kappa)/(2\kappa)$. 

Eq.~\eqref{eqn::partial1BPZ2} and~\eqref{eqn::partial2BPZ1} imply that, there exist functions $F_1$ and $F_2$:
\begin{align*}
\frac{\kappa}{2}\frac{\partial_{22}\mc Z}{\mc Z}-\cot(\theta_{21}/2)\frac{\partial_1 \mc Z}{\mc Z}-\frac{h}{2\left(\sin(\theta_{21}/2)\right)^2}=&F_2(\theta_2),
\\
    \frac{\kappa}{2}\frac{\partial_{11}\mc Z}{\mc Z}+\cot(\theta_{21}/2)\frac{\partial_2 \mc Z}{\mc Z}-\frac{h}{2\left(\sin(\theta_{21}/2)\right)^2}=&F_1(\theta_1).
\end{align*}
Using the identity $$\frac{\partial_{22} \mc Z}{\mc Z}  = \partial_2 \left(\frac{\partial_2 \mc Z}{\mc Z}\right) + \left(\frac{\partial_2 \mc Z}{\mc Z}\right)^2, $$
Eq.~\eqref{eq:rot} implies that $F_1$ and $F_2$ are constants and
\begin{align}
\label{eqn::BPZ1}
    \frac{\kappa}{2}\frac{\partial_{11}\mc Z}{\mc Z}+\cot(\theta_{21}/2)\frac{\partial_2 \mc Z}{\mc Z}-\frac{h}{2\left(\sin(\theta_{21}/2)\right)^2}=&F_1,\\
    \frac{\kappa}{2}\frac{\partial_{22}\mc Z}{\mc Z}-\cot(\theta_{21}/2)\frac{\partial_1 \mc Z}{\mc Z}-\frac{h}{2\left(\sin(\theta_{21}/2)\right)^2}=&F_2.
\label{eqn::BPZ2}
\end{align}

It remains to show $F_1=F_2$. Taking the derivative of \[a \mapsto b_j (\t_1 + a, \t_2 +a) =\kappa (\partial_j \mc Z/\mc Z) (\t_1 + a, \t_2 +a)\] and evaluate at $a = 0$, we get from \textbf{(CI)} and \eqref{eq:rot} that
\begin{align}\label{eq:rot_invariant}
    0 = \partial_1 \left(\frac{\partial_j \mc Z}{\mc Z}\right) + \partial_2 \left(\frac{\partial_j \mc Z}{\mc Z}\right) 
    , \qquad \forall j = 1,2. 
\end{align}
From this, we have 
\begin{align*}
    \frac{\partial_{11} \mc Z}{\mc Z}  - \frac{\partial_{22} \mc Z}{\mc Z} &  = \left(\frac{\partial_1 \mc Z}{\mc Z}\right)^2 -\left(\frac{\partial_2 \mc Z}{\mc Z}\right)^2\\
    \partial_1 \left(\frac{\partial_2 \mc Z}{\mc Z}\right) & = \partial_2 \left(\frac{\partial_1 \mc Z}{\mc Z}\right)\\
      \partial_1 \left(\frac{\partial_1 \mc Z}{\mc Z}\right) & = \partial_2 \left(\frac{\partial_2 \mc Z}{\mc Z}\right).
\end{align*} 
If we take the difference \eqref{eqn::BPZ1} - \eqref{eqn::BPZ2}, we get
\begin{equation}\label{eq:F_12}
    \frac{\k}{2} \left(\left(\frac{\partial_1 \mc Z}{\mc Z}\right)^2 - \left(\frac{\partial_2 \mc Z}{\mc Z}\right)^2 \right) +\cot (
    \t_{21}/2)  \left(\frac{\partial_2 \mc Z}{\mc Z} + \frac{\partial_1 \mc Z}{\mc Z}\right) = F_1 - F_2.
\end{equation}
Note that \eqref{eq:rot_invariant} also implies
$$ 0  = \partial_1 \left(\frac{\partial_1 \mc Z}{\mc Z}\right) + \partial_2 \left(\frac{\partial_1 \mc Z}{\mc Z}\right) =  \partial_1 \left(\frac{\partial_1 \mc Z}{\mc Z}\right) + \partial_1 \left(\frac{\partial_2 \mc Z}{\mc Z}\right) = \partial_1 \left(\frac{\partial_1 \mc Z}{\mc Z} + \frac{\partial_2 \mc Z}{\mc Z}\right) $$
and
$$ 0  = \partial_2 \left(\frac{\partial_1 \mc Z}{\mc Z} + \frac{\partial_2 \mc Z}{\mc Z}\right). $$
Hence there is $\mu \in \m R$ such that 
\begin{equation}\label{def:mu}
    \frac{2\mu}{\k} \equiv \frac{\partial_1 \mc Z}{\mc Z} + \frac{\partial_2 \mc Z}{\mc Z}.
\end{equation}
Plugging into \eqref{eq:F_12} we get
\begin{equation}\label{eq:two_Cases}
\left(\frac{\partial_1 \mc Z}{\mc Z} - \frac{\partial_2 \mc Z}{\mc Z}\right) \mu + \cot (\t_{21}/2) \frac{2\mu}{\k} = F_1-F_2.
\end{equation}
Combining~\eqref{def:mu} and~\eqref{eq:two_Cases}, there are two cases. 

\noindent \textbf{Case 1:} We have either $\mu=0$, then $F_1=F_2$ as desired.

\noindent \textbf{Case 2:} If $\mu \neq 0$, then
\begin{equation}\label{eq:case_1}
     \begin{cases}
  \dfrac{\partial_1 \mc Z}{\mc Z} =    -\frac{1}{\kappa} \cot(\t_{21}/2) +\frac{\mu}{\k}+\frac{F_1-F_2}{2\mu}, \\
   \dfrac{\partial_2 \mc Z}{\mc Z} =   \frac{1}{\kappa} \cot(\t_{21}/2) +\frac{\mu}{\k}-\frac{F_1-F_2}{2\mu}.
\end{cases}
\end{equation}
We solve \eqref{eq:case_1} and obtain that, for some constant $C \in \m R$, 
\begin{equation}\label{eqn::sol_case_1}
    \mc Z (\t_1, \t_2) = C \left(\sin (\t_{21}/2)\right)^{2/\k} \exp \left(\frac{\mu_1}{\k}\theta_1+\frac{\mu_2}{\kappa}\theta_2\right),
\end{equation}
where
\[\mu_1=\mu+\frac{\kappa(F_1-F_2)}{2\mu},\quad \mu_2=\mu-\frac{\kappa(F_1-F_2)}{2\mu}.\]
Plugging into~\eqref{eqn::BPZ1}-\eqref{eqn::BPZ2}, we obtain $\mu_1=\mu_2$ which implies $F_1=F_2$ as desired. Moreover, in this case we have 
\begin{equation*}
F_1 = F_2 = \frac{-3 + \mu^2}{2 \kappa}, \qquad \mc Z \propto \LG_\mu, \qquad \text{ when } \mu \neq 0.
\end{equation*}
This completes the proof.
\end{proof}

\subsection{Solutions to commutation relations with interchangeability}
\label{subsec::commutation_relation_INT}

We now classify all possible partition functions $\mc Z$ with the additional condition \textbf{(INT)}, which is equivalent to 
    \begin{equation}\label{eq:symm}
    b_1 (\t_1, \t_2) = b_2(\t_2, \t_1).
    \end{equation}

\begin{thm}\label{thm:classify_int}
Let $\kappa\in (0,8)$ and $\mc Z$ be a partition function from Proposition~\ref{prop:part_wo_int} and assume that \eqref{eq:symm} holds. Then, up to a multiplicative constant, $\mc Z$ is one of the following functions:
\begin{enumerate}
    \item $\mc Z =  \LG_\mu$ for some $\mu \in \m R$, where $\LG_\mu$ is defined in \eqref{eqn::2SLEspiral_pf}.
    \item $\mc Z = \mc Z_\a$ for some $\a < 1 -\k/8$, where $\mc Z_\a$ is defined in \eqref{eqn::CR_pf}.
\end{enumerate}
\end{thm}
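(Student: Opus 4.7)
The plan is to first use the rotation invariance \eqref{eq:rot} to reduce $\mc Z$ to a function of essentially one variable, and then exploit the interchangeability \eqref{eq:symm} to reduce the PDE system \eqref{eqn::BPZ1}--\eqref{eqn::BPZ2} to ODEs that can be classified by elementary means. Since $b_j = \kappa \partial_j \log \mc Z$ depends only on $r := \theta_2 - \theta_1$ by \eqref{eq:rot}, both $\partial_s A$ and $\partial_r A$ are functions of $r$ alone, where $A := \log \mc Z$ and $s := \theta_1 + \theta_2$. Commutativity of mixed partials then forces $\partial_s A$ to be a constant $\mu/\kappa$, so that after integration
\begin{equation*}
\mc Z(\theta_1, \theta_2) = \exp\!\left(\frac{\mu(\theta_1+\theta_2)}{\kappa}\right) \psi(r), \qquad \ell := \log \psi.
\end{equation*}
A direct computation gives $\partial_j \log \mc Z = \mu/\kappa \mp \ell'(r)$ and $\partial_{jj}\mc Z/\mc Z = \ell''(r) + (\mu/\kappa \mp \ell'(r))^2$; substituting into \eqref{eqn::BPZ1}--\eqref{eqn::BPZ2} and subtracting the two equations yields the single identity
\begin{equation*}
-2\mu\, \ell'(r) + \frac{2\mu \cot(r/2)}{\kappa} = F_1 - F_2.
\end{equation*}

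When $\mu \neq 0$, this pins down $\ell'(r) = \cot(r/2)/\kappa - (F_1-F_2)/(2\mu)$. The interchangeability \eqref{eq:symm} translates to $\ell'(r) + \ell'(2\pi - r) = 0$, and since $\cot((2\pi - r)/2) = -\cot(r/2)$ the constant term must vanish, giving $F_1 = F_2 =: F$ and $\ell'(r) = \cot(r/2)/\kappa$. Integrating yields $\psi(r) = C (\sin(r/2))^{2/\kappa}$. Plugging this $\ell$ back into the sum of \eqref{eqn::BPZ1} and \eqref{eqn::BPZ2} and using $\cot^2(r/2) = \sin^{-2}(r/2) - 1$ together with $h = (6-\kappa)/(2\kappa)$, the $\sin^{-2}(r/2)$ terms cancel exactly, leaving $F = (\mu^2 - 3)/(2\kappa)$. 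This identifies $\mc Z$ with $\LG_\mu$ and establishes family (i).

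In the remaining case $\mu = 0$, the subtraction equation is automatic and gives $F_1 = F_2 =: F$. The sum of \eqref{eqn::BPZ1}--\eqref{eqn::BPZ2} reduces to the linear second-order ODE
\begin{equation*}
\kappa\, \psi''(r) + 2\cot(r/2)\, \psi'(r) = \left(\frac{h}{\sin^2(r/2)} + 2F\right)\psi(r), \qquad r \in (0, 2\pi).
\end{equation*}
I would then make the change of variables $u = \sin^2(r/4)$, mapping $(0, 2\pi)$ onto $(0, 1)$, together with the Frobenius ansatz $\psi(r) = (\sin(r/2))^{(\kappa-6)/\kappa}\phi(u)$: the prefactor is designed so that it absorbs the singular $h/\sin^2(r/2)$ contribution, and the transformed equation for $\phi(u)$ is exactly Euler's hypergeometric equation \eqref{eqn::Euler_initial}, under the identification $F = (6-\kappa)(\kappa-2)/(8\kappa) - \alpha$. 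The symmetry $\psi(r) = \psi(2\pi - r)$ inherited from \eqref{eq:symm} transfers to $\phi(u) = \phi(1-u)$, so in particular $\phi'(1/2) = 0$; after fixing the overall multiplicative constant by $\phi(1/2) = 1$, this uniquely identifies $\phi = \phi_\alpha$, and thus $\mc Z = \mc Z_\alpha$.

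The main technical obstacle will be the Frobenius/hypergeometric reduction in the $\mu = 0$ case: verifying that the ansatz $(\sin(r/2))^{(\kappa-6)/\kappa}\phi(u)$ precisely absorbs the $h$-term, and unwinding the chain rule to match the coefficients of \eqref{eqn::Euler_initial}, is a careful but routine computation. Establishing the sharp range $\alpha < 1 - \kappa/8$ is more delicate: the critical value $\alpha = 1 - \kappa/8$ is exactly where $\phi_\alpha$ degenerates (up to a constant) to the pure Frobenius power $(u(1-u))^{(8-\kappa)/(2\kappa)}$, which corresponds to the $\mu = 0$ specialization $\LG_0$ of family (i). The strict inequality is needed to ensure $\phi_\alpha$ remains strictly positive throughout $(0,1)$ so that $\mc Z$ is a legitimate partition function.
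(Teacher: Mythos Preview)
Your proposal is correct and follows essentially the same route as the paper. Both proofs use rotation invariance to extract the constant $\mu$ (your $(r,s)$-coordinate factorization $\mc Z = \ee^{\mu s/\k}\psi(r)$ is just a repackaging of the paper's identity $\partial_1\mc Z/\mc Z + \partial_2\mc Z/\mc Z \equiv 2\mu/\k$), split into the cases $\mu\neq 0$ and $\mu=0$, and in the latter case perform the same substitution $u=\sin^2(r/4)$ with prefactor $(\sin(r/2))^{-2h}$ to land on the hypergeometric ODE~\eqref{eqn::Euler_initial}. The only organizational difference is that the paper establishes $F_1=F_2$ upfront in Lemma~\ref{lem:F_12_equal} via the relation $\mc Z(\t_2,\t_1+2\pi)=\l\,\mc Z(\t_1,\t_2)$, whereas you obtain it inside each case (from $\ell'(r)+\ell'(2\pi-r)=0$ when $\mu\neq 0$, and trivially from the subtraction when $\mu=0$); both arguments are valid. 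The positivity constraint $\a<1-\k/8$ that you flag as ``more delicate'' is exactly what the paper proves in Lemma~\ref{lem::Euler_initial} via a comparison/Gr\"onwall argument, so your outline is on target there as well.
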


Throughout this section, we assume that $\mc Z$ is a partition function from Proposition~\ref{prop:part_wo_int} and assume that \eqref{eq:symm} holds.

\begin{lem}\label{lem:F_12_equal}
There exists $\l > 0$ such that for all $\t_1 < \t_2 < \t_1 + 2\pi$,
\begin{equation}\label{eqn::cond_Z_1}
  \l \mc Z(\t_1, \t_2) = \mc Z(\t_2, \t_1 + 2\pi).
\end{equation}
\end{lem}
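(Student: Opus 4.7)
The plan is to translate the interchangeability condition \textbf{(INT)} into a functional equation relating $\mc Z(\theta_1, \theta_2)$ to $\mc Z(\theta_2, \theta_1 + 2\pi)$, and then to compare BPZ$_1$ evaluated at the swapped point with BPZ$_2$ to force $F_1 = F_2$.

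First, I would read \textbf{(INT)} at the infinitesimal level. If $(\eta^{(1)}, \eta^{(2)})$ starts at $(e^{\ii\theta_1}, e^{\ii\theta_2})$ with $\theta_1 < \theta_2 < \theta_1 + 2\pi$, then swapping the two curves produces a process starting from $(e^{\ii\theta_2}, e^{\ii\theta_1})$; to respect the ordering convention we lift the second angle by $2\pi$, giving the canonical starting pair $(\theta_2, \theta_1 + 2\pi)$. Matching the time-zero drifts in \eqref{eq:marg_1}--\eqref{eq:marg_2} on the two sides of \textbf{(INT)} then yields
\begin{equation*}
b_1(\theta_2, \theta_1 + 2\pi) = b_2(\theta_1, \theta_2), \qquad b_2(\theta_2, \theta_1 + 2\pi) = b_1(\theta_1, \theta_2).
\end{equation*}

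Second, inserting $b_j = \kappa \,\partial_j \log \mc Z$ from Proposition~\ref{prop:part_wo_int} and defining $H(\theta_1,\theta_2) = \log \mc Z(\theta_2, \theta_1+2\pi) - \log \mc Z(\theta_1, \theta_2)$, a direct chain-rule computation shows that both $\partial_1 H$ and $\partial_2 H$ vanish identically. Since the domain $\{\theta_1 < \theta_2 < \theta_1 + 2\pi\}$ is connected, $H$ is constant, and exponentiating produces \eqref{eqn::cond_Z_1} with a positive constant $\lambda$.

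Third, to deduce $F_1 = F_2$, I would evaluate the equation \eqref{eqn::BPZ1} at the swapped point $(\theta_2, \theta_1 + 2\pi)$. Writing $a = \theta_2 - \theta_1$, one has $\cot((2\pi - a)/2) = -\cot(a/2)$ and $\sin^2((2\pi - a)/2) = \sin^2(a/2)$. Differentiating the identity $\mc Z(\theta_2, \theta_1 + 2\pi) = \lambda\, \mc Z(\theta_1, \theta_2)$ gives $(\partial_1 \mc Z)(\theta_2, \theta_1+2\pi) = \lambda\,\partial_2 \mc Z(\theta_1,\theta_2)$, $(\partial_2 \mc Z)(\theta_2, \theta_1+2\pi) = \lambda\,\partial_1 \mc Z(\theta_1,\theta_2)$, and $(\partial_{11} \mc Z)(\theta_2, \theta_1+2\pi) = \lambda\,\partial_{22} \mc Z(\theta_1,\theta_2)$. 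Substituting into \eqref{eqn::BPZ1}, the factors of $\lambda$ cancel, and the resulting identity is precisely \eqref{eqn::BPZ2} with its right-hand side replaced by $F_1$. Thus $F_1 = F_2$.

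The only subtle point—which I expect to be the main obstacle for a reader—is the careful bookkeeping around the branch choice in the swap: as emphasized just after Proposition~\ref{prop:part_wo_int}, $\mc Z$ is not a function on the torus, so the $+2\pi$ in the second argument must be tracked consistently through the chain rule and through the trigonometric simplifications. Everything else reduces to routine differentiation.
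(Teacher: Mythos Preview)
Your proposal is correct and follows essentially the same route as the paper: both arguments use \textbf{(INT)} in the form $b_1(\theta_1,\theta_2)=b_2(\theta_2,\theta_1+2\pi)$ (together with its companion) to show $\log\big(\mc Z(\theta_2,\theta_1+2\pi)/\mc Z(\theta_1,\theta_2)\big)$ has vanishing gradient, and then compare one BPZ equation at the swapped point with the other via the trig identities $\cot(\pi-x)=-\cot x$, $\sin^2(\pi-x)=\sin^2 x$. The only cosmetic difference is direction---the paper substitutes \eqref{eqn::cond_Z_1} into \eqref{eqn::BPZ2} and recognises the result as \eqref{eqn::BPZ1} at $(\theta_2,\theta_1+2\pi)$, whereas you start from \eqref{eqn::BPZ1} at the swapped point and reduce to \eqref{eqn::BPZ2}; the computations are identical.
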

\begin{proof}
    Assumption \eqref{eq:symm} implies that 
$$\k \partial_1 \log\left(\frac{\mc Z (\t_2, \t_1 + 2\pi)}{\mc Z(\t_1, \t_2)}\right) = b_2 (\t_2, \t_1 + 2\pi) - b_1 (\t_1, \t_2) = b_2 (\t_2, \t_1) - b_1 (\t_1, \t_2) =0$$
and similarly,
$$\partial_2 \log\left(\frac{\mc Z (\t_2, \t_1 + 2\pi)}{\mc Z(\t_1, \t_2)}\right) =0.$$
Therefore, $\mc Z (\t_2, \t_1 + 2\pi)/\mc Z(\t_1, \t_2)$ is a constant $\l$ which also equals $\mc Z(\t_1+2\pi, \t_2+2\pi) / Z (\t_2, \t_1 + 2\pi)$. 
\end{proof}

\begin{proof}[Proof of Theorem~\ref{thm:classify_int}]
From the proof of Proposition~\ref{prop:part_wo_int}, combining~\eqref{def:mu} and~\eqref{eq:two_Cases}, we have two cases: we have either $\mu\neq 0$, then 
$$\mc Z \propto \LG_\mu$$
which satisfies the interchangeability condition \eqref{eq:symm}.

Or we have $\mu=0$, then 
\begin{equation}\label{eq:case_2}
    \partial_1 \mc Z= -\partial_2 \mc Z.
\end{equation}

Eq.~\eqref{eq:case_2} implies that $\partial_a \mc Z(\t_1 + a, \t_2 +a) = 0$, in other words, $\mc Z(\theta_1, \theta_2)$ only depends on the difference $\theta=\theta_2-\theta_1$. Thus we write with a slight abuse of notation
\[\mc Z(\theta_1, \theta_2)= \mc Z(\theta),\quad\theta\in (0,2\pi).\]
We want to solve the equations~\eqref{eqn::BPZ1_unified} and \eqref{eqn::BPZ2_unified}, they are simplified to the same equation:
\begin{equation}\label{eqn::rot_BPZ}
    \frac{\kappa}{2}\frac{\mc Z''}{\mc Z}+\cot(\theta/2)\frac{\mc Z'}{\mc Z}-\frac{h}{2\left(\sin(\theta/2)\right)^2}=F. 
\end{equation} 
Noticing that 
$$\l ^2 = \frac{\mc Z (\t_1 + 2\pi, \t_2 + 2\pi)}{\mc Z (\t_1, \t_2)} = 1, $$
we obtain $\l = 1$ and
\begin{equation}\label{eqn::periodic}
    \mc Z(\theta)= \mc Z(2\pi-\theta).
\end{equation}

To solve \eqref{eqn::rot_BPZ} under the assumption \eqref{eqn::periodic}, we may write 
\begin{equation}\label{eqn::thetau}
   \theta\in(0,2\pi),\quad u=(\sin(\theta/4))^2\in (0,1),\quad \mc Z(\theta)=C (\sin(\theta/2))^{-2h}\phi(u),
\end{equation}
satisfying for $u \in (0,1)$,
\begin{equation}\label{eqn::BPZ_Euler_initial}
\begin{cases}
   u(1-u)\phi''+\frac{3\kappa-8}{2\kappa}(1-2u)\phi'+\frac{8}{\kappa}\left(\frac{(6-\kappa)(\kappa-2)}{8\kappa}-F\right)\phi=0, \\
\phi(1/2)=1, \quad \phi'(1/2)=0,
\end{cases}
\end{equation}
where $C$ is a positive constant.
Eq.~\eqref{eqn::BPZ_Euler_initial} has a unique solution $\phi$ in $C^2(0,1)$ due to Lemma~\ref{lem::Euler_initial}.
More precisely:
\begin{itemize}
    \item When 
$$\a :  = \frac{(6-\kappa)(\kappa-2)}{8\kappa}-F  <  1 -  \frac{\k}{8}, \quad \text{ i.e., } \quad F > - \frac{3}{2 \k},$$
the unique solution 
$\phi (u)$ is positive for all $u\in (0,1)$.
\item When $\a = 1 -  \k/8$, 
$$\phi (u) = (4 u (1-u))^{4/\k - 1/2} = \left(\sin(\t/2)\right)^{8/\k - 1}$$ by \eqref{eqn::Euler_sol_critical}. Hence,
$$\mc Z (\t) = C \left(\sin(\t/2)\right)^{2/\k} \propto \LG_0.$$
\item  When $\a > 1 -  \k/8$, Lemma~\ref{lem::Euler_initial} shows that the unique solution is not always positive. Therefore it does not give any partition function in Proposition~\ref{prop:part_wo_int}. See Figure~\ref{fig::kappa4} for the case when $\kappa=4$. 
\end{itemize}
This completes the proof.
\end{proof}

\begin{remark}\label{rem::CR_kappa4}
    When $\kappa=4$, the partition function $\LZ_{\alpha}$ in~\eqref{eqn::CR_pf} has an explicit expression. We denote $\theta=\theta_2-\theta_1$. 
    \begin{itemize}
        \item If $\alpha\in [0,1/2)$, we have
        \begin{align*}
            \LZ_{\alpha}(\theta_1, \theta_2)=\left(\sin(\theta/2)\right)^{-1/2}\cos\left(\sqrt{\frac{\alpha}{2}}(\theta-\pi)\right). 
        \end{align*}
        \item If $\alpha<0$, we have 
        \begin{align*}
            \LZ_{\alpha}(\theta_1, \theta_2)=\left(\sin(\theta/2)\right)^{-1/2}\cosh\left(\sqrt{\frac{|\alpha|}{2}}(\theta-\pi)\right). 
        \end{align*}
    \end{itemize}
    See Figure~\ref{fig::kappa4}. 
\end{remark}

\begin{figure}[ht!]
    \begin{center}
\includegraphics[width=0.7\textwidth]{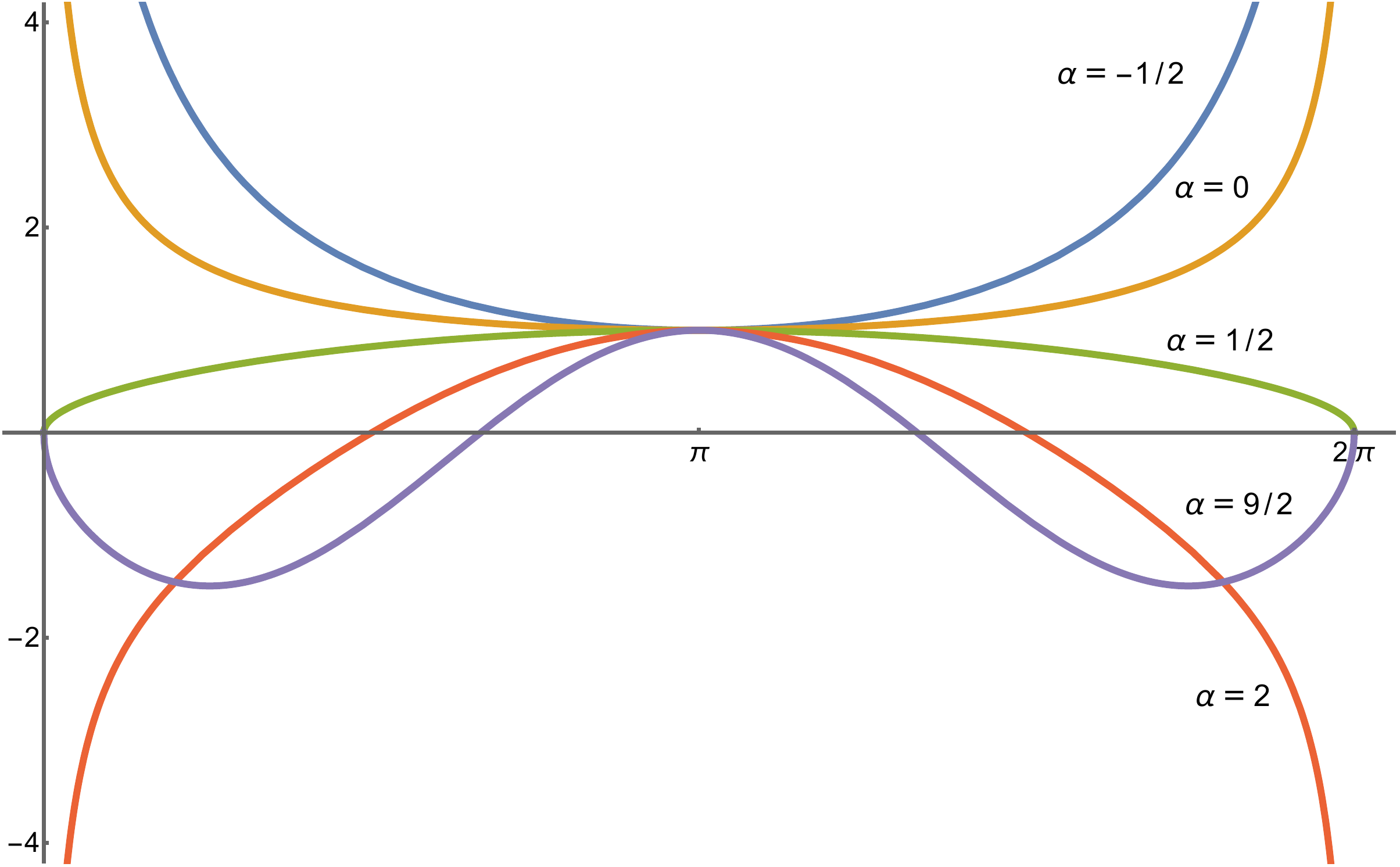}
\end{center}
    \caption{Plot of $\theta\mapsto \mc Z_{\alpha}(\theta)$ with $\kappa=4$ and $\theta=\theta_2-\theta_1$ for different $\alpha$'s. When $\alpha>1/2$, it is not always positive.}
    \label{fig::kappa4}
\end{figure}

\begin{remark}\label{rem::onearm}
When $\kappa\in (0,8)$ and $\alpha=\alpha_1(\kappa)$ which is the one-arm exponent for conformal loop ensemble~\cite{SSW2009}:
\[\alpha_1(\kappa)=\frac{(3\kappa-8)(8-\kappa)}{32\kappa},\]
the partition function $\LZ_{\alpha_1(\kappa)}$ has an explicit expression: we denote $\theta=\theta_2-\theta_1$,
\begin{equation}\label{eqn::onearm}
    \LZ_{\alpha_1(\kappa)}(\theta_1, \theta_2)=2^{4/\kappa - 3/2}(\sin(\theta/2))^{1-6/\kappa}\left((\sin(\theta/4))^{8/\kappa-1}+(\cos(\theta/4))^{8/\kappa-1}\right).
\end{equation}
For $\kappa\in (4,8)$, this is the conjectured partition function for the scaling limit of interfaces in critical random-cluster models with Dobrushin boundary condition conditional on the one-arm event, see~\cite{FengWu2024}. This conjecture holds for percolation (with $\kappa=6$) and FK-Ising model (with $\kappa=16/3$). 
\end{remark}

\begin{remark}\label{rem::hypoelliptic}
    Let us give a comment on the $C^2$ requirement on $b_1, b_2$ in the assumption \textbf{(MARG)}. We assume such $C^2$ requirement as part of the axioms at the beginning. But in fact, this is not necessary. Suppose we relax the requirement and only assume $b_1, b_2$ in~\textbf{(MARG)} are continuous, we are still able to derive~\eqref{eqn::BPZ1} and~\eqref{eqn::BPZ2} as weak solutions. The operators in the left-hand side of radial BPZ equations~\eqref{eqn::BPZ1} and~\eqref{eqn::BPZ2} are hypoelliptic, due to a general characterization by H\"{o}rmander~\cite{Hormander1967}, see also~\cite[Lemma~5]{Dub_SLEVir1} and~\cite[Sect.~2.3.3]{PW19}. Therefore, the weak solutions are strong solutions which are in fact $C^\infty$, and consequently $b_1, b_2$ are $C^\infty$. Note that such analysis does not work for $\kappa=0$ since the corresponding BPZ equation~\eqref{eqn::int_0} is nonlinear. 
\end{remark}

\section{Identification of locally commuting 2-radial SLEs}
\label{sec::radialSLE}

The goal of this section is to identify all locally commuting 2-radial SLEs, namely, those whose partition functions are classified in Theorem~\ref{thm:classify_int}:
We discuss the 2-sided radial SLE with spiral in Section~\ref{subsec::2SLEspiral} and chordal SLE weighted by conformal radius in Section~\ref{subsec::chordalSLE_CR}. 

\subsection{Radial SLE}
\label{subsec::radialLoewner}

For $\theta\in [0,2\pi)$, suppose $\eta: [0,T]\to \overline{\m D}$ is a continuous non-self-crossing curve such that $\eta_0=\ee^{\ii \theta}$ and $\eta_{(0,T)}\subset\m D\setminus\{0\}$. We parameterize the curve by the capacity and denote by $g_t$ the corresponding radial Loewner chain as in~\eqref{eqn::radialLoewner}. 
Denote by $\phi_t$ the covering conformal map of $g_t$, i.e. $g_t(\exp(\ii w))=\exp(\ii \phi_t(w))$ with $\phi_0(w)=w$ for $w\in\mathbb{H}$. Then the radial Loewner equation~\eqref{eqn::radialLoewner} is equivalent to 
\[\partial_t\phi_t(w)=\cot\left((\phi_t(w)-\xi_t)/2\right), \quad \phi_0(w)=w.\]

Radial $\SLE_{\kappa}$ is the radial Loewner chain with $\xi_t=\sqrt{\kappa}B_t$ where $B$ is one-dimensional Brownian motion. We also call it radial $\SLE_{\kappa}$ in $(\m D; \ee^{\ii\theta}; 0)$. 
In the following lemma, we will describe the boundary perturbation property of radial SLE. We fix the parameters:
\begin{equation}\label{eqn::parameters}
    \kappa>0, \quad h=\frac{6-\kappa}{2\kappa},\quad \tilde{h}=\frac{(6-\kappa)(\kappa-2)}{8\kappa},\quad c=\frac{(6-\kappa)(3\kappa-8)}{2\kappa}.
\end{equation} 
\begin{lem}[{See \cite[Prop.~5]{JL18} or \cite[Prop.~2.2]{HealeyLawlerNSidedSLE}}]
\label{lem::radialSLE_boundary_perturb}
Fix $\kappa\in (0,8)$ and $\theta\in [0,2\pi)$. 
Suppose $K$ is a compact subset of $\overline{\m D}$ such that $\m D\setminus K$ is simply connected and contains the origin and that $K$ has a positive distance from $\ee^{\ii\theta}$. Suppose $\eta$ is radial $\SLE_{\kappa}$ in $(\m D; \ee^{\ii\theta}; 0)$ and define $\tau=\inf\{t: \eta_t\in K\}$.
For $t<\tau$, denote by $g_{t, K}$ the unique conformal map $\m D\setminus g_t(K)\to \m D$ such that $g_{t, K}(0)=0$ and $g_{t, K}'(0)>0$. We denote by $\phi_{t, K}$ the covering map of $g_{t, K}$. Then the following process is a local martingale: 
\begin{align*}
M_t=1{\{t<\tau\}}\phi_{t, K}'(\xi_t)^hg_{t, K}'(0)^{\tilde{h}}\exp\left(\frac{c}{2}m_t\right), 
\end{align*}
where $m_t$ is defined through
\[\ud m_t=-\frac{1}{3}\mc S \phi_{t, K}(\xi_t)\ud t+\frac{1}{6}\left(1-\phi_{t, K}'(\xi_t)^2\right)\ud t,\]
and $\mc S\phi=\frac{\phi'''}{\phi'}-\frac{3}{2}\left(\frac{\phi''}{\phi'}\right)^2$ denotes the Schwarzian derivative of $\phi$.

Moreover, when $\kappa\le 4$, the process $M_t$ is a uniformly integrable martingale. The law of radial $\SLE_{\kappa}$ in $(\m D\setminus K; \ee^{\ii\theta}; 0)$ is the same as radial $\SLE_{\kappa}$ in $(\m D; \ee^{\ii\theta}; 0)$ weighted by $M_t$.
\end{lem}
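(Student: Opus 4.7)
The plan is to apply Itô's formula to $\log M_t$ to show the drift vanishes thanks to the specific values of $(h,\tilde h, c)$ in~\eqref{eqn::parameters}, then invoke Girsanov to identify the tilted law. The key preliminary is the deterministic time-evolution of $g_{t,K}$. Writing $\tilde g_{\tilde t} := g_{t,K} \circ g_t \circ g_K^{-1}$, one checks that $\tilde g_{\tilde t}$ is a radial Loewner chain in $\m D$ started from $g_K(\ee^{\ii\theta})$ with driving function $\tilde\xi_t := \phi_{t,K}(\xi_t)$, parametrized by its own capacity $\tilde t = t + \log g'_{t,K}(0) - \log g'_K(0)$. Differentiating the defining identity in $t$ yields
$$\partial_t \phi_{t,K}(w) = \phi'_{t,K}(w)\cot\!\bigl((\phi_{t,K}(w) - \tilde\xi_t)/2\bigr) - \cot\!\bigl((w - \xi_t)/2\bigr), \quad \partial_t \log g'_{t,K}(0) = \phi'_{t,K}(\xi_t)^2 - 1.$$
Iterated $w$-differentiation and the limit $w \to \xi_t$, in which the pole of $\cot$ cancels against the expansion of $\phi_{t,K}$ near $\xi_t$, give finite formulas for $\partial_t \phi'_{t,K}(\xi_t)$, $\partial_t \phi''_{t,K}(\xi_t)$, $\partial_t \phi'''_{t,K}(\xi_t)$ in terms of the first three $w$-derivatives of $\phi_{t,K}$ at $\xi_t$.

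Next, applying Itô to $\log M_t = h \log \phi'_{t,K}(\xi_t) + \tilde h \log g'_{t,K}(0) + (c/2)\, m_\m D(\eta_{[0,t]},K)$ with $\xi_t = \theta + \sqrt\kappa B_t$, the martingale part is $\sqrt\kappa\, h\,\phi''_{t,K}(\xi_t)/\phi'_{t,K}(\xi_t)\,\ud B_t$. The drift collects three contributions: the deterministic $\partial_t$ of $h \log \phi'_{t,K}(\xi_t)$ plus the Itô correction $(\kappa/2)\partial_w^2 \log \phi'_{t,K}(w)|_{w=\xi_t}$; the term $\tilde h(\phi'_{t,K}(\xi_t)^2 - 1)$ from $\tilde h \log g'_{t,K}(0)$; and $-\tfrac{c}{6}\mc S\phi_{t,K}(\xi_t) + \tfrac{c}{12}(1 - \phi'_{t,K}(\xi_t)^2)$ from $m_\m D$ by its defining integral. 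A direct algebraic simplification, the radial counterpart of the chordal Lawler-Schramm-Werner/Dub\'edat calculation, shows the total drift vanishes identically precisely for $(h,\tilde h, c)$ as in~\eqref{eqn::parameters}; hence $M_t$ is a nonnegative local martingale on $[0,\tau)$. For uniform integrability I would use $g'_{t,K}(0) \leq 1$ (Schwarz's lemma), $\phi'_{t,K}(\xi_t) \leq 1$ (since $\xi_t$ stays on a boundary arc at positive distance from $g_t(K)$), and the fact that $m_\m D(\eta_{[0,t]}, K)$ is bounded by a quantity depending only on $\m D$ and $K$ (the Brownian loop measure interpretation), which for $\kappa \in (0,4]$ remains valid up to $\tau$ since $\eta$ is simple and keeps positive distance from $K$. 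Combined with the cutoff $\mathbf 1\{t<\tau\}$, $M_t$ is uniformly bounded, hence uniformly integrable.

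Finally, by Girsanov's theorem, under the $M_t$-tilted measure $\xi_t$ acquires drift $\kappa h\,\phi''_{t,K}(\xi_t)/\phi'_{t,K}(\xi_t)\,\ud t$. A short Itô computation using the ODE for $\phi_{t,K}$ shows that this is precisely the drift making $\tilde\xi_t = \phi_{t,K}(\xi_t)$ a Brownian motion of variance $\kappa$ in the capacity parametrization $\tilde t$. Hence the chain $\tilde g_{\tilde t}$ is a radial $\SLE_\kappa$ in $\m D$ from $g_K(\ee^{\ii\theta})$; pulling back by $g_K^{-1}$, the law of $\eta$ under $M_t$-weighting is radial $\SLE_\kappa$ in $(\m D\setminus K; \ee^{\ii\theta}; 0)$, as claimed. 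The main obstacle is the drift-cancellation in the Itô step: it encodes the tip conformal weight $h$, the interior weight $\tilde h$ at $0$, and the central charge $c$ of the SLE$_\kappa$/CFT correspondence, and the specific form of $m_\m D$ is tailored exactly so that its Schwarzian term and its $1 - \phi'_{t,K}(\xi_t)^2$ term absorb the otherwise non-cancelling drift contributions; the detailed algebra is identical to that of \cite[Prop.\,5]{JL18} and \cite[Prop.\,2.2]{HealeyLawlerNSidedSLE}, to which I would refer for the explicit computation.
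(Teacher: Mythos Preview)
The paper does not give its own proof of this lemma; it simply cites \cite[Prop.~5]{JL18} and \cite[Prop.~2.2]{HealeyLawlerNSidedSLE}. Your sketch is exactly the computation carried out in those references, so in that sense your approach and the paper's are the same: It\^o on $\log M_t$, drift cancellation from the specific values of $(h,\tilde h,c)$, then Girsanov to identify the tilted driving function as $\sqrt\kappa$ times Brownian motion in the new capacity parametrization.

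There is, however, a genuine slip in your uniform integrability argument. You write $g_{t,K}'(0)\le 1$ ``by Schwarz's lemma,'' but the inequality goes the other way: $g_{t,K}^{-1}\colon \m D\to \m D\setminus g_t(K)\subset\m D$ fixes $0$, so Schwarz gives $|(g_{t,K}^{-1})'(0)|\le 1$, i.e.\ $g_{t,K}'(0)\ge 1$. Since $\tilde h$ changes sign at $\kappa=2$, you cannot bound $g_{t,K}'(0)^{\tilde h}$ uniformly this way across $\kappa\in(0,4]$. Likewise, the claim that $m_{\m D}(\eta_{[0,t]},K)$ is bounded by a constant depending only on $(\m D,K)$ is not correct on $\{\tau<\infty\}$: as $t\uparrow\tau$ the curve approaches $K$ and the loop measure of loops hitting both sets diverges. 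The actual UI argument in the cited references does not proceed by a crude uniform bound on each factor; one shows that $M_t$ converges a.s.\ as $t\to\infty$ (to $0$ on $\{\tau<\infty\}$, the blow-up of the loop term being compensated by $\phi_{t,K}'(\xi_t)^h\to 0$) and identifies the limit with the Radon--Nikodym derivative via the restriction property, so that $\m E[M_\infty]=1$. You should either reproduce that argument or, as the paper does, defer to the references for it.
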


\begin{remark}\label{rem:blm}
 It is explained in the proof of \cite[Prop.~5]{JL18} that the term $m_t=m_{\m D}(\eta_{[0,t]}, K)$ is the same as the Brownian loop measure of loops that intersect both $\eta_{[0,t]}$ and $K$ when $\eta_{[0,t]}\cap K=\emptyset$. 
\end{remark}

Fix $\theta_1, \theta_2$ such that $\theta_1<\theta_2<\theta_1+2\pi$. Let $\k \in [0,\infty)$, $\rho\in \mathbb{R}$ and $\mu\in\mathbb{R}$. A radial $\SLE_{\kappa}^{\mu}(\rho)$ in $\m D$ starting from $\ee^{\ii\theta_1}$ with force point $\ee^{\ii\theta_2}$ and spiraling rate $\mu$ is the radial Loewner chain with driving function $\xi_t$ that solves the following SDE: 
\begin{equation}\label{eq:def_kappa_mu_rho}
    \begin{cases}
    \xi_0=\theta_1, V_0=\theta_2,\\
    \ud \xi_t=\sqrt{\kappa}\ud B_t+\frac{\rho}{2}\cot\left((\xi_t-V_t)/2\right)\ud t+\mu\ud t, \\
    \ud V_t=\cot\left((V_t-\xi_t)/2\right)\ud t.
    \end{cases}
\end{equation}
The solution to SDE~\eqref{eq:def_kappa_mu_rho} exists for all time when $\kappa\in (0,8)$ and $\rho>-2$ and it is generated by a continuous curve from $\ee^{\ii\theta_1}$ to the origin. 
\begin{lem}\label{lem::radialSLE_continuity}
For $\kappa\in (0,8)$, $\rho>-2$, $\mu\in\mathbb{R}$, radial $\SLE_{\kappa}^{\mu}(\rho)$ in $\m D$ is almost surely generated by a continuous curve $\eta$ and $\lim_{t\to\infty}\eta_t=0$. 
\end{lem}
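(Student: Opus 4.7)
The plan is to prove the lemma in three stages: well-posedness of the SDE~\eqref{eq:def_kappa_mu_rho}, continuity of the resulting curve, and convergence of the tip to the origin. For well-posedness, I would set $W_t := V_t - \xi_t$, which takes values in $[0, 2\pi]$. Using $\cot(-x/2) = -\cot(x/2)$ one obtains
\[\ud W_t = \tfrac{\rho+2}{2}\cot(W_t/2)\,\ud t - \mu\,\ud t - \sqrt{\kappa}\,\ud B_t.\]
Near $W=0$ the drift behaves as $(\rho+2)/W$, and near $W=2\pi$ as $-(\rho+2)/(2\pi - W)$; hence $W$ locally resembles a Bessel process of dimension $d = 1 + 2(\rho+2)/\kappa$, which exceeds $1$ precisely because $\rho > -2$. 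Standard SDE theory for Bessel-type drifts (as used in the chordal setting, see e.g.~\cite{IG4}) then gives a unique strong solution defined for all $t \geq 0$, with $W$ instantaneously reflected at $\{0, 2\pi\}$ when $d \in (1,2)$ and strictly avoiding these points when $d \geq 2$, which yields $\xi_t$ for all time.

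For continuity, I would rely on local absolute continuity with respect to the standard radial $\SLE_\kappa^\mu$ (no force point), whose continuity is proved in~\cite{IG4}. Fix $T > 0$ and $\vare \in (0, \pi)$ and set $\tau_\vare := \inf\{t \geq 0 : W_t \leq \vare \text{ or } W_t \geq 2\pi - \vare\}$. On $[0, \tau_\vare \wedge T]$ the extra drift $\tfrac{\rho}{2}\cot(W_t/2)$ is bounded, so by Girsanov's theorem the law of $(\xi_t)$ is mutually absolutely continuous with that of $\theta_1 + \sqrt{\kappa}B_t + \mu t$, which is the driving function of a standard radial $\SLE_\kappa^\mu$. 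Continuity of the curve therefore transfers to $[0, \tau_\vare \wedge T]$; letting $\vare \to 0$ and $T \to \infty$ handles all times at which the tip $\eta_t$ stays bounded away from the force point $\ee^{\ii V_t}$. To extend continuity across the times when $W$ touches $\{0, 2\pi\}$, I would apply a M\"obius map sending $(\m D; 0)$ to an appropriate half-plane configuration, identifying the local dynamics with a chordal $\SLE_\kappa(\rho)$ with an additional interior marked point, whose continuity is established within the imaginary geometry framework of~\cite{IG4}.

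Finally, for the convergence $\eta_t \to 0$, since the curve is parametrized by capacity we have $g_t'(0) = \ee^t$, so $\CR(D_t) = \ee^{-t} \to 0$ where $D_t$ is the component of $\m D \setminus \eta_{[0,t]}$ containing the origin. Combined with continuity of $\eta$ and the Bessel-type recurrence of $W$, this forces $\eta_t \to 0$ almost surely; a clean way to see this is via the imaginary geometry coupling of~\cite{IG4}, in which $\eta$ appears as a flow line targeted at $0$, and flow lines almost surely reach their target points. The main obstacle of the argument is Stage~2, specifically extending continuity across the times when the tip encounters the force point: there Girsanov degenerates because the drift blows up, and one must fall back on either the chordal coordinate change or the imaginary-geometry input.
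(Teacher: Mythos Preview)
The paper's own proof of this lemma is simply a citation: ``This is proved in~\cite[Prop.~3.30]{IG4} and~\cite[Sect.~4]{IG4}. See also~\cite{Lawler2sidedSLE}.'' It does not supply any independent argument. Your proposal is therefore not merely consistent with the paper's approach but strictly more detailed, since you sketch how the cited results are actually structured (SDE well-posedness via the Bessel comparison, local absolute continuity via Girsanov away from the force point, and the imaginary-geometry input for the remaining cases).

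One small caution on your Stage~3: the implication ``$\CR(D_t)\to 0$ plus continuity forces $\eta_t\to 0$'' is not valid on its own---a continuous curve could oscillate, repeatedly approaching $0$ without converging, while still having $\CR(D_t)\to 0$. You correctly note that the clean argument goes through the flow-line interpretation of~\cite{IG4}, so this is not a gap in your plan, but the sentence as written overstates what the capacity argument alone delivers. Since the paper itself defers entirely to~\cite{IG4}, your proposal is adequate and indeed more informative than what the paper provides.
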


\begin{proof}
This is proved in~\cite[Prop.~3.30]{IG4} and~\cite[Sect.~4]{IG4}. See also~\cite{Lawler2sidedSLE}. 
\end{proof}

\begin{remark} \label{rem::2SLE_spiral_marg}
The expression of $\LG_\mu$ in~\eqref{eqn::2SLEspiral_pf} is such that the Loewner driving function of radial $\SLE_\k^\mu(2)$  can be rewritten as
\begin{equation*}
    \begin{cases}
    \xi_0=\theta_1, V_0=\theta_2,\\
    \ud \xi_t=\sqrt{\kappa}\ud B_t+ \k\,\partial_1 \log \LG_\mu (\xi_t, V_t)\ud t, \\
    \ud V_t=\cot\left((V_t-\xi_t)/2\right)\ud t.
    \end{cases}
\end{equation*}
\end{remark}

We define radial SLE$_{\kappa}^{\mu}(\rho)$ process in a general domain $\Omega$ with $x_1, x_2\in\partial\Omega$ and $z\in\Omega$ as the pushforward measure of SLE$_{\kappa}^{\mu}(\rho)$ from $(\m D; \ee^{\ii\theta_1}, \ee^{\ii\theta_2}; 0)$ by the map $\varphi^{-1}$, where $\varphi: \Omega\to \m D$ is any conformal map such that $\varphi(z)=0$ and $\varphi(x_j)=\ee^{\ii\theta_j}$ for $j\in\{1,2\}$.

\subsection{Two-sided radial SLE with spiral}
\label{subsec::2SLEspiral}

In this section,
we define two-sided radial $\SLE_\k$ with spiral when $\k \in (0,8)$
by reweighting two independent radial SLE$_\k$.  The two-sided radial SLE analyzed in~\cite{Lawler2sidedSLE,field2016twosided,Lawler_Zhou_natural,HealeyLawlerNSidedSLE} is a special case where the spiraling rate $\mu = 0$. 

We use the same notations as in Figure~\ref{fig::gtcommutation}. 
\begin{lem}\label{lem::2SLE_spiral_mart}
Fix $\kappa\in (0,8)$ and $\theta_1<\theta_2<\theta_1+2\pi$.
We fix the parameters $h, \tilde{h}, c$ as in~\eqref{eqn::parameters}.
For $\mu\in\mathbb{R}$, we define $\LG_{\mu}$ as in~\eqref{eqn::2SLEspiral_pf}. 
Let $\PP$ denote the probability measure under which $(\eta^{(1)}, \eta^{(2)})$ are two independent radial $\SLE_{\kappa}$ in $\m D$ starting from $\ee^{\ii\theta_1}$ and $\ee^{\ii\theta_2}$ respectively. 
We define 
\begin{align}\label{eqn::2SLE_mart_new}
    M_{\mbt}(\LG_{\mu})=&1\left\{\eta^{(1)}_{[0,t_1]}\cap \eta^{(2)}_{[0,t_2]}=\emptyset\right\}g_{\mbt}'(0)^{\frac{3-\mu^2}{2\kappa}-\tilde{h}}\times\prod_{j=1}^2 \phi_{\mbt, j}'\left(\xi_{t_j}^{(j)}\right)^hg_{\mbt, j}'(0)^{\tilde{h}}\notag\\
    &\times \LG_{\mu}\left(\theta_{\mbt}^{(1)}, \theta_{\mbt}^{(2)}\right) \exp\left(\frac{c}{2}m_{\mbt}\right),
\end{align}
where  $m_{\mbt}$ is defined through
\begin{equation}\label{eq:blm_2_curves}
    \ud m_{\mbt}= \sum_{j=1}^2\left(-\frac{1}{3}\mc S\phi_{\mbt, j}\left(\xi^{(j)}_{t_j}\right)+\frac{1}{6}\left(1-\phi_{\mbt, j}'\left(\xi_{t_j}^{(j)}\right)^2\right)\right)\ud t_j. 
\end{equation}
Then $M_{\mbt}(\LG_{\mu})$ is a two-time-parameter local martingale with respect to $\PP$. 
\end{lem}

Lemma~\ref{lem::radialSLE_boundary_perturb} and Remark~\ref{rem:blm} show that  $m_{\mbt}= m_{\m D}\left(\eta_{[0,t_1]}^{(1)},\eta_{[0,t_2]}^{(2)}\right)$ is the Brownian loop measure of loops in $\m D$ intersecting both $\eta_{[0,t_1]}^{(1)}$ and $\eta_{[0,t_2]}^{(2)}$ when $\eta_{[0,t_1]}^{(1)}\cap \eta_{[0,t_2]}^{(2)}=\emptyset$. 

We note that $\LG_{\mu}$ satisfies the ``radial BPZ equations''
\begin{align}
\label{eqn::BPZ1_Gmu}
    \frac{\kappa}{2}\frac{\partial_{11}\LG_{\mu}}{\LG_{\mu}}+\cot(\theta_{21}/2)\frac{\partial_2\LG_{\mu}}{\LG_{\mu}}-\frac{h}{2\left(\sin(\theta_{21}/2)\right)^2}=&\frac{\mu^2-3}{2\kappa};\\
\frac{\kappa}{2}\frac{\partial_{22}\LG_{\mu}}{\LG_{\mu}}-\cot(\theta_{21}/2)\frac{\partial_1\LG_{\mu}}{\LG_{\mu}}-\frac{h}{2\left(\sin(\theta_{21}/2)\right)^2}=&\frac{\mu^2-3}{2\kappa}.
\label{eqn::BPZ2_Gmu}
\end{align}
 
Relations~\eqref{eq:blm_2_curves}, \eqref{eqn::BPZ1_Gmu} and~\eqref{eqn::BPZ2_Gmu} play an essential role in the proof of Lemma~\ref{lem::2SLE_spiral_mart}. 

\begin{proof}[Proof of Lemma~\ref{lem::2SLE_spiral_mart}]

Let us first compute the variations of terms appearing in \eqref{eqn::2SLE_mart_new}.
From standard calculations (see e.g.~\cite[Lem.~3.2]{HealeyLawlerNSidedSLE}), we have the following variational formula of the capacity parameterizations:
\begin{align}\label{eq:cap_change}
\begin{split}
\frac{\ud g'_{\mbt, 1}(0)}{g'_{\mbt, 1}(0)}=&\left(\phi'_{\mbt, 1}\left(\xi^{(1)}_{t_1}\right)^2-1\right)\ud t_1+\phi'_{\mbt, 2}\left(\xi^{(2)}_{t_2}\right)^2\ud t_2;\\
\frac{\ud g'_{\mbt, 2}(0)}{g'_{\mbt, 2}(0)}=&\,\phi'_{\mbt, 1}\left(\xi^{(1)}_{t_1}\right)^2\ud t_1+\left(\phi'_{\mbt, 2}\left(\xi^{(2)}_{t_2}\right)^2-1\right)\ud t_2;\\
    \frac{\ud g_{\mbt}'(0)}{g_{\mbt}'(0)}=&\sum_{j=1}^2\phi_{\mbt, j}'\left(\xi_{t_j}^{(j)}\right)^2\ud t_j. 
\end{split}
    \end{align}

    From the assumption that $\eta^{(1)}$ and $\eta^{(2)}$ are two independent radial $\SLE_\k$ under $\PP$, we have that $\xi^{(1)}=\sqrt{\kappa}B^{(1)} + \t_1$ and $\xi^{(2)}=\sqrt{\kappa}B^{(2)} + \t_2$ where $B^{(1)}$ and $B^{(2)}$ are two independent Brownian motions. 
From this, It\^o's calculus gives:
        \begin{align}
    \ud \theta_{\mbt}^{(1)}=\ud \phi_{\mbt, 1}\left(\xi^{(1)}_{t_1}\right)=&\,\phi'_{\mbt, 1}\left(\xi^{(1)}_{t_1}\right)\ud \xi_{t_1}^{(1)}-\kappa h\phi''_{\mbt, 1}\left(\xi^{(1)}_{t_1}\right)\ud t_1\notag\\
    &+\cot\left((\theta_{\mbt}^{(1)}-\theta_{\mbt}^{(2)})/2\right)\phi'_{\mbt, 2}\left(\xi^{(2)}_{t_2}\right)^2\ud t_2,\label{eqn::2SLE_ind_driving1}\end{align}
    where we used the expansions
    \begin{align}
    \partial_{t_1} \phi_{\mbt,1} (w) & = \left(\phi_{\mbt,1}^{(1)}\right)' \left(\xi_{t_1}^{(1)}\right)^2 \cot \left((\phi_{\mbt,1}^{(1)} (w) - \t_{\mbt}^{(1)})/2\right)  -  \left(\phi_{\mbt,1}^{(1)}\right)' (w) \cot \left((w - \xi_{t_1}^{(1)})/2\right)\notag\\
     & =  -3 \left(\phi_{\mbt,1}^{(1)}\right)'' \left(\xi_{t_1}^{(1)}\right) + O(w - \xi_{t_1}^{(1)}) \label{eq:part_t_1_phi_t_1}, \quad\text{as }w \to \xi_{t_1}^{(1)},\\
      \partial_{t_2} \phi_{\mbt,1} (w) & =  \cot \left((\phi_{\mbt,1}^{(1)} (w) - \t_{\mbt}^{(2)})/2\right) \phi_{\mbt,2}' \left(\xi_{t_2}^{(2)}\right)^2.\label{eq:part_t_2_phi_t_1}
    \end{align}
    Similarly, we have
    \begin{align}
    \ud \theta_{\mbt}^{(2)}=\ud \phi_{\mbt, 2}\left(\xi^{(2)}_{t_2}\right)=&\phi'_{\mbt, 2}\left(\xi^{(2)}_{t_2}\right)\ud \xi_{t_2}^{(2)}-\kappa h\phi''_{\mbt, 2}\left(\xi^{(2)}_{t_2}\right)\ud t_2\notag\\
    &+\cot\left((\theta_{\mbt}^{(2)}-\theta_{\mbt}^{(1)})/2\right)\phi'_{\mbt, 1}\left(\xi^{(1)}_{t_1}\right)^2\ud t_1.\label{eqn::2SLE_ind_driving2}
\end{align}
Taking derivatives with respect to $w$ in \eqref{eq:part_t_1_phi_t_1} and \eqref{eq:part_t_2_phi_t_1} and let $w \to \xi^{(1)}_{t_1}$ we obtain 
\begin{align*}
    \frac{(\partial_{t_1}\phi'_{\mbt, 1})\left(\xi^{(1)}_{t_1}\right)}{\phi'_{\mbt, 1}\left(\xi^{(1)}_{t_1}\right)} =& \frac{1}{2}\frac{\phi''_{\mbt, 1}\left(\xi^{(1)}_{t_1}\right)^2}{\phi'_{\mbt, 1}\left(\xi^{(1)}_{t_1}\right)^2}-\frac{4}{3}\frac{\phi'''_{\mbt, 1}\left(\xi^{(1)}_{t_1}\right)}{\phi'_{\mbt, 1}\left(\xi^{(1)}_{t_1}\right)}  - \frac {1}{6} \left(\phi'_{\mbt, 1}\left(\xi^{(1)}_{t_1}\right)^2 - 1\right),\\
    \frac{(\partial_{t_2}\phi'_{\mbt, 1})\left(\xi^{(1)}_{t_1}\right)}{\phi'_{\mbt, 1}\left(\xi^{(1)}_{t_1}\right)} =& - \frac{1}{2}\csc^2\left((\theta_{\mbt}^{(1)}-\theta_{\mbt}^{(2)})/2\right)\phi'_{\mbt, 2}\left(\xi^{(2)}_{t_2}\right)^2.
\end{align*}
It\^o's formula gives
\begin{align*}
\frac{\ud \phi'_{\mbt, 1}\left(\xi^{(1)}_{t_1}\right)}{\phi'_{\mbt, 1}\left(\xi^{(1)}_{t_1}\right)}
=&\frac{\phi''_{\mbt, 1}\left(\xi^{(1)}_{t_1}\right)}{\phi'_{\mbt, 1}\left(\xi^{(1)}_{t_1}\right)}\ud \xi_{t_1}^{(1)}-\frac{1}{2}\csc^2\left((\theta_{\mbt}^{(1)}-\theta_{\mbt}^{(2)})/2\right)\phi'_{\mbt, 2}\left(\xi^{(2)}_{t_2}\right)^2\ud t_2\\
&+ \left(\frac{\kappa}{2}\frac{\phi'''_{\mbt, 1}\left(\xi^{(1)}_{t_1}\right)}{\phi'_{\mbt, 1}\left(\xi^{(1)}_{t_1}\right)}+\frac{1}{2}\frac{\phi''_{\mbt, 1}\left(\xi^{(1)}_{t_1}\right)^2}{\phi'_{\mbt, 1}\left(\xi^{(1)}_{t_1}\right)^2}-\frac{4}{3}\frac{\phi'''_{\mbt, 1}\left(\xi^{(1)}_{t_1}\right)}{\phi'_{\mbt, 1}\left(\xi^{(1)}_{t_1}\right)}-\frac{1}{6}\left(\phi'_{\mbt, 1}\left(\xi^{(1)}_{t_1}\right)^2-1\right)
\right)\ud t_1,\\
\frac{\ud \phi'_{\mbt, 2}\left(\xi^{(2)}_{t_2}\right)}{\phi'_{\mbt, 2}\left(\xi^{(2)}_{t_2}\right)}
=&\frac{\phi''_{\mbt, 2}\left(\xi^{(2)}_{t_2}\right)}{\phi'_{\mbt, 2}\left(\xi^{(2)}_{t_2}\right)}\ud \xi_{t_2}^{(2)}-\frac{1}{2}\csc^2\left((\theta_{\mbt}^{(2)}-\theta_{\mbt}^{(1)})/2\right)\phi'_{\mbt, 1}\left(\xi^{(1)}_{t_1}\right)^2\ud t_1\notag\\
&+  \left(\frac{\kappa}{2}\frac{\phi'''_{\mbt, 2}\left(\xi^{(2)}_{t_2}\right)}{\phi'_{\mbt, 2}\left(\xi^{(2)}_{t_2}\right)}+\frac{1}{2}\frac{\phi''_{\mbt, 2}\left(\xi^{(2)}_{t_2}\right)^2}{\phi'_{\mbt, 2}\left(\xi^{(2)}_{t_2}\right)^2}-\frac{4}{3}\frac{\phi'''_{\mbt, 2}\left(\xi^{(2)}_{t_2}\right)}{\phi'_{\mbt, 2}\left(\xi^{(2)}_{t_2}\right)}-\frac{1}{6}\left(\phi'_{\mbt, 2}\left(\xi^{(2)}_{t_2}\right)^2-1\right)
\right)\ud t_2.
\end{align*}
 
Now we are ready to prove that $M_\mbt(\LG_{\mu})$ is a two-time-parameter local martingale. Combining with~\eqref{eq:blm_2_curves},~\eqref{eqn::BPZ1_Gmu} and~\eqref{eqn::BPZ2_Gmu}, we have
\begin{align}
    \frac{\ud M_{\mbt}(\LG_{\mu})}{M_{\mbt}(\LG_{\mu})}=&\left(\frac{3-\mu^2}{2\kappa}-\tilde{h}\right)\frac{\ud g_{\mbt}'(0)}{g_{\mbt}'(0)}+\tilde{h}\sum_{j=1}^2\frac{\ud g'_{\mbt, j}(0)}{g'_{\mbt, j}(0)}+\frac{c}{2} \ud m_{\mbt}\notag\\
    &+\sum_{j=1}^2\left(\frac{\partial_j\LG_{\mu}}{\LG_{\mu}}\ud \theta_{\mbt}^{(j)}+\frac{\kappa}{2}\frac{\partial_{jj}\LG_{\mu}}{\LG_{\mu}}\phi'_{\mbt, j}\left(\xi^{(j)}_{t_j}\right)^2\ud t_j\right)\notag\\
    &+\sum_{j=1}^2 \left(h\frac{\ud \phi'_{\mbt, j}\left(\xi^{(j)}_{t_j}\right)}{\phi'_{\mbt, j}\left(\xi^{(j)}_{t_j}\right)}+\frac{\kappa h(h-1)}{2}\frac{\phi''_{\mbt, j}\left(\xi^{(j)}_{t_j}\right)^2}{\phi'_{\mbt, j}\left(\xi^{(j)}_{t_j}\right)^2}\ud t_j\right)\notag\\
    &+\kappa h \sum_{j=1}^2 \frac{\partial_j\LG_{\mu}}{\LG_{\mu}}\phi''_{\mbt, j}\left(\xi^{(j)}_{t_j}\right)\ud t_j\notag\\
    =&    \sum_{j=1}^2\left(\frac{\partial_j\LG_{\mu}}{\LG_{\mu}}\phi'_{\mbt, j}\left(\xi^{(j)}_{t_j}\right)+h\frac{\phi''_{\mbt, j}\left(\xi^{(j)}_{t_j}\right)}{\phi'_{\mbt, j}\left(\xi^{(j)}_{t_j}\right)}\right)\ud \xi_{t_j}^{(j)}.\label{eqn::2SLEspiral_mart_aux}
\end{align}
In particular, 
$M_{\mbt}(\LG_{\mu})$ is a two-time-parameter local martingale under $\PP$. 
\end{proof}

\begin{definition}\label{def::2SLEspiral}
Fix $\kappa\in (0,8)$ and $\theta_1<\theta_2<\theta_1+2\pi$.
For $\mu\in\mathbb{R}$, we define $\LG_{\mu}$ as in~\eqref{eqn::2SLEspiral_pf}. 
Let $\PP$ denote the probability measure under which $(\eta^{(1)}, \eta^{(2)})$ are two independent radial $\SLE_{\kappa}$ in $\m D$ starting from $\ee^{\ii\theta_1}$ and $\ee^{\ii\theta_2}$ respectively. We define $M_{\mbt}(\LG_{\mu})$ as~\eqref{eqn::2SLE_mart_new} in Lemma~\ref{lem::2SLE_spiral_mart} and denote by $\PP(\LG_{\mu})$ the probability measure obtained by tilting $\PP$ by $M_{\mbt}(\LG_{\mu})$ and call it \emph{two-sided radial $\SLE_{\kappa}$ with spiraling rate $\mu$} in $(\m D; \ee^{\ii\theta_1}, \ee^{\ii\theta_2}; 0)$.
\end{definition}

 From Corollary~\ref{cor::2SLEspiral_marginal} below, it follows that when $\kappa\le 4$, two-sided radial $\SLE_{\kappa}$ with spiral is well-defined for all time and the two curves do not touch each other before they reach the origin. When $\kappa\in (4,8)$, the above definition for two-sided radial $\SLE_{\kappa}$ with spiral is only defined up to the times the two curves touch each other. When the spiraling rate is $0$, we obtain the standard two-sided radial $\SLE$ analyzed in~\cite{Lawler2sidedSLE,field2016twosided,Lawler_Zhou_natural,HealeyLawlerNSidedSLE}. 

\begin{cor}\label{cor::2SLEspiral_marginal}
Under $\PP(\LG_\mu)$, for every $\mbt = (t_1, t_2)$, 
    \begin{itemize}
        \item $g_{\mbt}(\eta^{(1)})$ is a radial $\SLE_\k^{\mu} (2)$ in $\m D$ starting from $\exp(\ii \t_{\mbt}^{(1)})$ with force point at $\exp(\ii \t_{\mbt}^{(2)})$,
        \item $g_{\mbt}(\eta^{(2)})$ is a radial $\SLE_\k^{\mu} (2)$ in $\m D$ starting from $\exp(\ii \t_{\mbt}^{(2)})$ with force point at $\exp(\ii \t_{\mbt}^{(1)})$.
    \end{itemize}
    In particular, $\PP(\LG_{\mu})$ on pairs $(\eta^{(1)}, \eta^{(2)})$ satisfies~\textbf{(CI)}, \textbf{(DMP)}, \textbf{(MARG)} and~\textbf{(INT)} with 
    \begin{equation}\label{eqn::2SLE_marg}
  b_j=\kappa\partial_j\log\LG_{\mu},\quad j=1,2.      
    \end{equation}
\end{cor}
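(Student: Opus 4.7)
The plan is to apply Girsanov's theorem to the two-time-parameter local martingale $M_\mbt(\LG_\mu)$ of Lemma~\ref{lem::2SLE_spiral_mart} and combine it with Remark~\ref{rem::2SLE_spiral_marg}, which rewrites the radial $\SLE_\kappa^\mu(2)$ driving function as one with drift $\kappa\,\partial_1\log\LG_\mu$.

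I would first specialize to the boundary case $\mbt=(t_1,0)$ to identify the marginal law of $\eta^{(1)}$. When $t_2=0$, the conformal maps degenerate as $g_\mbt=g^{(1)}_{t_1}$, $\phi_{\mbt,1}=\mathrm{Id}$, $\phi_{\mbt,2}=\phi^{(1)}_{t_1}$, $\theta^{(1)}_\mbt=\xi^{(1)}_{t_1}$, $\theta^{(2)}_\mbt=V^{(2)}_{t_1}$ and $m_\mbt=0$. Substituting into~\eqref{eqn::2SLE_mart_new}, the instantaneous dependence of $M_{(t_1,0)}(\LG_\mu)$ on $\xi^{(1)}_{t_1}$ enters only through the factor $\LG_\mu(\xi^{(1)}_{t_1},V^{(2)}_{t_1})$, the other factors depending on $\xi^{(1)}$ only through its past. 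Girsanov's theorem then yields
\[
\ud\xi^{(1)}_{t_1}=\sqrt{\kappa}\,\ud\tilde B^{(1)}_{t_1}+\kappa\,\partial_1\log\LG_\mu(\xi^{(1)}_{t_1},V^{(2)}_{t_1})\,\ud t_1 =\sqrt{\kappa}\,\ud\tilde B^{(1)}_{t_1}+\cot\!\bigl((\xi^{(1)}_{t_1}-V^{(2)}_{t_1})/2\bigr)\ud t_1+\mu\,\ud t_1,
\]
which by~\eqref{eq:def_kappa_mu_rho} is precisely the driving SDE of a radial $\SLE_\kappa^\mu(2)$ with force point $\ee^{\ii\theta_2}$; global existence of the curve is furnished by Lemma~\ref{lem::radialSLE_continuity}. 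This establishes~\textbf{(MARG)} with $b_1=\kappa\,\partial_1\log\LG_\mu$, and by the manifest symmetry of $M_\mbt(\LG_\mu)$ under swapping the two curves, also $b_2=\kappa\,\partial_2\log\LG_\mu$ and~\textbf{(INT)}.

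For~\textbf{(CI)}, I would use $\LG_\mu(\theta_1-a,\theta_2-a)=e^{-2\mu a/\kappa}\LG_\mu(\theta_1,\theta_2)$: the rotation $\rho_a$ multiplies $M_\mbt(\LG_\mu)$ by the constant $e^{-2\mu a/\kappa}$, which cancels in the normalized density $M_\mbt(\LG_\mu)/M_0(\LG_\mu)$ defining $\PP(\LG_\mu)$. For~\textbf{(DMP)}, the two-time-parameter martingale property of $M_\mbt(\LG_\mu)$, together with the cocycle decomposition $g_{\mbt'}=g_\mbt\circ\tilde g$ and analogous decompositions for $\phi_{\mbt',j}$, $g'_{\mbt',j}(0)$, and the restriction identity for the loop measure already encoded in~\eqref{eq:blm_2_curves}, shows that conditionally on $\eta_\mbt$ the density governing the continuations is precisely the one defining $\PP_{(\theta^{(1)}_\mbt,\theta^{(2)}_\mbt)}(\LG_\mu)$ applied to $(g_\mbt(\eta^{(1)}_{[t_1,\cdot]}),g_\mbt(\eta^{(2)}_{[t_2,\cdot]}))$. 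Combining~\textbf{(DMP)} with Step~1 applied to this shifted initial configuration (resetting the second time to zero and growing only the image of $\eta^{(1)}$) then yields that $g_\mbt(\eta^{(1)})$ is radial $\SLE_\kappa^\mu(2)$ from $\ee^{\ii\theta^{(1)}_\mbt}$ with force point $\ee^{\ii\theta^{(2)}_\mbt}$, and symmetrically for $\eta^{(2)}$ by~\textbf{(INT)}.

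The main obstacle will be establishing~\textbf{(DMP)}: one must verify that each of the building blocks of $M_\mbt(\LG_\mu)$ in~\eqref{eqn::2SLE_mart_new}---the derivatives $\phi'_{\mbt,j}$, the capacity factors $g'_{\mbt,j}(0)$ and $g'_\mbt(0)$, the partition function $\LG_\mu$, and the loop measure $m_\mbt$---composes correctly under $g_{\mbt'}=g_\mbt\circ\tilde g$ so as to reproduce on the slit domain the analogous tilt against two independent radial $\SLE_\kappa$. This is a cocycle-type computation in the spirit of Lemma~\ref{lem::radialSLE_boundary_perturb}, relying on the conformal covariance of the Poisson kernel derivatives and the restriction property of the Brownian loop measure.
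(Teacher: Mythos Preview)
Your approach is correct in outline but takes a more indirect route than the paper. The paper applies Girsanov directly to the \emph{full} two-time martingale $M_\mbt(\LG_\mu)$: substituting the explicit form of $\LG_\mu$ into~\eqref{eqn::2SLEspiral_mart_aux} and combining with~\eqref{eqn::2SLE_ind_driving1}--\eqref{eqn::2SLE_ind_driving2}, one obtains closed SDEs for $\theta^{(1)}_\mbt,\theta^{(2)}_\mbt$ under $\PP(\LG_\mu)$ valid for all $\mbt$; after the capacity reparametrization~\eqref{eq:cap_change} these are recognized as the $\SLE_\kappa^\mu(2)$ driving equations started from $(\theta^{(1)}_\mbt,\theta^{(2)}_\mbt)$, which yields the two bullet points and hence \textbf{(CI)}, \textbf{(DMP)}, \textbf{(MARG)}, \textbf{(INT)} simultaneously. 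You instead first treat the slice $t_2=0$ (which is fine) and then propose to prove \textbf{(DMP)} separately via a cocycle decomposition of $M_\mbt(\LG_\mu)$---the step you correctly flag as the main obstacle. That route does work, but it requires unpacking the boundary-perturbation martingale of Lemma~\ref{lem::radialSLE_boundary_perturb} for each curve together with the restriction property of the loop measure to show that the conditional Radon--Nikodym derivative, composed with $g_\mbt$, reproduces the tilt against two \emph{independent} radial $\SLE_\kappa$ in $\m D$ (not merely the continuation in the slit domain). The paper's direct two-time Girsanov computation bypasses this decomposition entirely, so what you anticipate as the hardest part simply does not arise.
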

\begin{proof}
Combining~\eqref{eqn::2SLEspiral_mart_aux} with~\eqref{eqn::2SLEspiral_pf}, we have
\begin{align*}
\frac{\ud M_{\mbt}(\LG_{\mu})}{M_{\mbt}(\LG_{\mu})}=&\left(\frac{1}{\kappa}\cot\left((\theta_{\mbt}^{(1)}-\theta_{\mbt}^{(2)})/2\right)\phi'_{\mbt, 1}\left(\xi^{(1)}_{t_1}\right)+\frac{\mu}{\kappa}\phi'_{\mbt, 1}\left(\xi^{(1)}_{t_1}\right)+h\frac{\phi''_{\mbt, 1}\left(\xi^{(1)}_{t_1}\right)}{\phi'_{\mbt, 1}\left(\xi^{(1)}_{t_1}\right)}\right)\ud \xi_{t_1}^{(1)}\\
&+\left(\frac{1}{\kappa}\cot\left((\theta_{\mbt}^{(2)}-\theta_{\mbt}^{(1)})/2\right)\phi'_{\mbt, 2}\left(\xi^{(2)}_{t_2}\right)+\frac{\mu}{\kappa}\phi'_{\mbt, 2}\left(\xi^{(2)}_{t_2}\right)+h\frac{\phi''_{\mbt, 2}\left(\xi^{(2)}_{t_2}\right)}{\phi'_{\mbt, 2}\left(\xi^{(2)}_{t_2}\right)}\right)\ud \xi_{t_2}^{(2)}.
\end{align*}
From Girsanov's theorem and~\eqref{eqn::2SLE_ind_driving1} and~\eqref{eqn::2SLE_ind_driving2}, under $\PP(\LG_{\mu})$, we have 
   \begin{align}
       \ud \theta_{\mbt}^{(1)}=&\sqrt{\kappa}\phi_{\mbt, 1}'\left(\xi_{t_1}^{(1)}\right)\ud \tilde{B}_{t_1}^{(1)}
       +\mu\phi_{\mbt, 1}'\left(\xi_{t_1}^{(1)}\right)^2\ud t_1\notag\\
       &+\cot\left((\theta_{\mbt}^{(1)}-\theta_{\mbt}^{(2)})/2\right)
       \left(\phi_{\mbt, 1}'\left(\xi_{t_1}^{(1)}\right)^2\ud t_1+\phi_{\mbt, 2}'\left(\xi_{t_2}^{(2)}\right)^2\ud t_2\right),\label{eqn::2SLE_driving1}\\
        \ud \theta_{\mbt}^{(2)}=&\sqrt{\kappa}\phi_{\mbt, 2}'\left(\xi_{t_2}^{(2)}\right)\ud \tilde{B}_{t_2}^{(2)}
       +\mu\phi_{\mbt, 2}'\left(\xi_{t_2}^{(2)}\right)^2\ud t_2\notag\\
       &+\cot\left((\theta_{\mbt}^{(2)}-\theta_{\mbt}^{(1)})/2\right)
       \left(\phi_{\mbt, 1}'\left(\xi_{t_1}^{(1)}\right)^2\ud t_1+\phi_{\mbt, 2}'\left(\xi_{t_2}^{(2)}\right)^2\ud t_2\right),\label{eqn::2SLE_driving2}
   \end{align}
   where $\tilde{B}^{(1)}$ and $\tilde{B}^{(2)}$ are two independent Brownian motions under $\PP(\LG_{\mu})$.  

    Therefore, taking into account the variation of the capacity parametrization \eqref{eq:cap_change},  \eqref{eqn::2SLE_driving1} and \eqref{eqn::2SLE_driving2} show that under $\PP(\LG_\mu)$, for every $\mbt = (t_1, t_2)$, $g_{\mbt}(\eta^{(1)})$ is a radial $\SLE_\k^{\mu} (2)$ in $\m D$ starting from $\exp(\ii \t_{\mbt}^{(1)})$ with force point at $\exp(\ii \t_{\mbt}^{(2)})$. Similarly, $g_{\mbt}(\eta^{(2)})$ is a radial $\SLE_\k^{\mu} (2)$ in $\m D$ starting from $\exp(\ii \t_{\mbt}^{(2)})$ with force point at $\exp(\ii \t_{\mbt}^{(1)})$. These imply~\textbf{(CI)}, \textbf{(DMP)}, \textbf{(MARG)} and~\textbf{(INT)}. Eq.~\eqref{eqn::2SLE_marg} follows from Remark~\ref{rem::2SLE_spiral_marg}. 
\end{proof}

\subsection{Resampling property of two-sided radial SLE with spiral}
In this section, we will prove the resampling property of two-sided radial SLE with spiral as we described in Section~\ref{subsec::intro_resampling} and in Corollary~\ref{cor::commutation_resampling}. We fix $\kappa\in (0,4]$ in the following Theorem~\ref{thm::resampling_property}, because we will use the boundary perturbation property in Lemma~\ref{lem::radialSLE_boundary_perturb} with $\kappa\in (0,4]$ in the proof.

\begin{thm}[Resampling property]\label{thm::resampling_property}
Fix $\kappa\in (0,4]$, $\mu\in\mathbb{R}$ and $\theta_1<\theta_2<\theta_1+2\pi$. Suppose $(\eta^{(1)}, \eta^{(2)})\sim \PP(\LG_{\mu})$ is two-sided radial $\SLE_{\kappa}$ with spiraling rate $\mu$ as in Definition~\ref{def::2SLEspiral}, we have the followings.
\begin{itemize}
    \item The marginal law of $\eta^{(1)}$ is radial $\SLE_{\kappa}^{\mu}(2)$ in $\m D$ starting from $\ee^{\ii\theta_1}$ with force point $\ee^{\ii\theta_2}$ and spiraling rate $\mu$. 
    \item Given $\eta^{(1)}$, the conditional law of $\eta^{(2)}$ is chordal $\SLE_{\kappa}$ in 
    $\m D\setminus\eta^{(1)}$ from $\ee^{\ii\theta_2}$ to $0$. 
\end{itemize}
The same is true when we interchange $\eta^{(1)}$ and $\eta^{(2)}$.
\end{thm}

\begin{remark}\label{rem:coupling_GFF}
    Radial SLE with spiral appears as a flow line in the setup  
of imaginary geometry~\cite{IG4}. 
An alternative way to construct a two-sided radial $\SLE_{\kappa}$ with spiraling rate $\mu$ is to take pair of flow lines of 
\[\Gamma+\frac{(8-\kappa)}{2\sqrt{\kappa}}\arg(\cdot)+\frac{\mu}{\sqrt \k}\log|\cdot|,\]
where $\Gamma$ is a GFF in $\m D$ with properly chosen boundary data, and the angles of the two flow lines are also chosen properly.
See Figure~\ref{fig::simulation}.
Using such coupling, one is able to derive the resampling property in Theorem~\ref{thm::resampling_property}, see~\cite[Prop.\,3.28]{IG4}. 
However, our proof of the resampling property in 
Section~\ref{subsec::2SLEspiral} does not use the coupling with imaginary geometry. We derive it directly using a refined analysis of the Radon--Nikodym derivative $M_{\mbt}(\LG_{\mu})$ in Definition~\ref{def::2SLEspiral}.
\end{remark}

\begin{proof}[Proof of Theorem~\ref{thm::resampling_property}]
Since $\LG_\mu (\t_1, \t_2) = \l \,\LG_\mu (\t_2, \t_1+2\pi)$ for some constant $\l$ which does not depend on $\t_1$ and $\t_2$, $\eta^{(1)}$ and $\eta^{(2)}$ are interchangeable.  Therefore, it suffices to show the bullet points in the statement.
The marginal law of $\eta^{(1)}$ is a consequence of Corollary~\ref{cor::2SLEspiral_marginal}, and it remains to show the conditional law of $\eta^{(2)}$ given $\eta^{(1)}$. 

The law of $\eta^{(1)}_{[0,t_1]}$ is the same as radial $\SLE_{\kappa}$ weighted by the following local martingale:
    \begin{align*}
    M_{(t_1, 0)}(\LG_{\mu})=&\left(g_{t_1}^{(1)}\right)'(0)^{\frac{3-\mu^2}{2\kappa}-\tilde{h}}\times \LG_{\mu}\left(\xi_{t_1}^{(1)}, \phi_{t_1}^{(1)}(\theta_2)\right)\times\left(\phi_{t_1}^{(1)}\right)'(\theta_2)^h\left(g_{t_1}^{(1)}\right)'(0)^{\tilde{h}}.
    \end{align*}
    The law of $(\eta^{(1)}, \eta^{(2)})$ is the same as two independent radial $\SLE_{\kappa}$ weighted by the local martingale $M_{\mbt}(\LG_{\mu})$. Therefore, the conditional law of $\eta^{(2)}_{[0,t_2]}$ given $\eta^{(1)}_{[0,t_1]}$ is the same as a radial $\SLE_{\kappa}$ weighted by 
    \begin{align*}
    \frac{M_{(t_1, t_2)}(\LG_{\mu})}{M_{(t_1, 0)}(\LG_{\mu})}=&\underbrace{\frac{\phi_{\mbt, 2}'\left(\xi_{t_2}^{(2)}\right)^hg_{\mbt, 2}'(0)^{\tilde{h}}\exp\left(\frac{c}{2}m_{\mbt}\right)}{\left(\phi_{t_1}^{(1)}\right)'(\theta_2)^h\left(g_{t_1}^{(1)}\right)'(0)^{\tilde{h}}}}_{=:P_{\mbt}}\times g_{\mbt, 1}'(0)^{\frac{3-\mu^2}{2\kappa}}\frac{\LG_{\mu}\left(\theta_{\mbt}^{(1)}, \theta_{\mbt}^{(2)}\right)}{\LG_{\mu}\left(\xi_{t_1}^{(1)}, \phi_{t_1}^{(1)}(\theta_2)\right)}\phi_{\mbt, 1}'\left(\xi_{t_1}^{(1)}\right)^h. 
    \end{align*}
From the boundary perturbation property Lemma~\ref{lem::radialSLE_boundary_perturb}, a radial $\SLE_{\kappa}$ in $(\m D; \ee^{\ii\theta_2}; 0)$ weighted by $P_{\mbt}$ has the same law as radial $\SLE_{\kappa}$ in $(\m D\setminus\eta^{(1)}_{[0,t_1]}; \ee^{\ii\theta_2}; 0)$. Thus, the conditional law of $\eta^{(2)}_{[0,t_2]}$ given $\eta^{(1)}_{[0,t_1]}$ is the same as a radial $\SLE_{\kappa}$ in $(\m D\setminus\eta^{(1)}_{[0,t_1]}; \ee^{\ii\theta_2}; 0)$ weighted by 
\begin{align}\label{eqn::2SLE_conditional_RN1}
g_{\mbt, 1}'(0)^{\frac{3-\mu^2}{2\kappa}}\frac{\LG_{\mu}\left(\theta_{\mbt}^{(1)}, \theta_{\mbt}^{(2)}\right)}{\LG_{\mu}\left(\xi_{t_1}^{(1)}, \phi_{t_1}^{(1)}(\theta_2)\right)}\phi_{\mbt, 1}'\left(\xi_{t_1}^{(1)}\right)^h. 
\end{align}
Note that the law of radial $\SLE_{\kappa}(2)$ in $\m D\setminus\eta^{(1)}_{[0,t_1]}$ from $\ee^{\ii\theta_2}$ to $0$ with force point $\eta^{(1)}_{t_1}$ is the same as
radial $\SLE_{\kappa}$ in $(\m D\setminus\eta^{(1)}_{[0,t_1]}; \ee^{\ii\theta_2}; 0)$ weighted by~\cite{SW05}: 
\begin{align}\label{eqn::2SLE_conditional_RN2}
g_{\mbt, 1}'(0)^{\frac{3}{2\kappa}}\frac{\LG_{0}\left(\theta_{\mbt}^{(1)}, \theta_{\mbt}^{(2)}\right)}{\LG_{0}\left(\xi_{t_1}^{(1)}, \phi_{t_1}^{(1)}(\theta_2)\right)}\phi_{\mbt, 1}'\left(\xi_{t_1}^{(1)}\right)^h. 
\end{align}
Combining~\eqref{eqn::2SLE_conditional_RN1} and~\eqref{eqn::2SLE_conditional_RN2}, we see that the conditional law of $\eta^{(2)}_{[0,t_2]}$ given $\eta^{(1)}_{[0,t_1]}$ is the same as radial $\SLE_{\kappa}(2)$ in $\m D\setminus\eta^{(1)}_{[0,t_1]}$ from $\ee^{\ii\theta_2}$ to $0$ with force point $\eta^{(1)}_{t_1}$ weighted by 
\begin{align}\label{eqn::2SLE_conditional_RN3}
R_{\mbt}=&g_{\mbt, 1}'(0)^{\frac{-\mu^2}{2\kappa}}\frac{\LG_{\mu}\left(\theta_{\mbt}^{(1)}, \theta_{\mbt}^{(2)}\right)}{\LG_{\mu}\left(\xi_{t_1}^{(1)}, \phi_{t_1}^{(1)}(\theta_2)\right)}\frac{\LG_{0}\left(\xi_{t_1}^{(1)}, \phi_{t_1}^{(1)}(\theta_2)\right)}{\LG_{0}\left(\theta_{\mbt}^{(1)}, \theta_{\mbt}^{(2)}\right)}\notag\\
=& g_{\mbt, 1}'(0)^{\frac{-\mu^2}{2\kappa}} \exp\left(\frac{\mu}{\kappa}\left(\theta_{\mbt}^{(1)}+\theta_{\mbt}^{(2)}\right)-\frac{\mu}{\kappa}\left(\xi_{t_1}^{(1)}+\phi_{t_1}^{(1)}(\theta_2)\right)\right).
\end{align}
We will prove in Lemma~\ref{lem::SLEradalTochordal} that the law of radial $\SLE_{\kappa}(2)$ in $\m D\setminus\eta^{(1)}_{[0,t_1]}$ from $\ee^{\ii\theta_2}$ to $0$ with force point $\eta^{(1)}_{t_1}$ converges to chordal $\SLE_{\kappa}$ in $(\m D\setminus\eta^{(1)}; \ee^{\ii\theta_2}, 0)$ as $t_1\to\infty$; and we will show in Lemma~\ref{lem::Rt_limit} that $R_{\mbt}\to 1$ almost surely as $t_1\to\infty$. Combining these two parts, the conditional law of $\eta^{(2)}_{[0,t_2]}$ given $\eta^{(1)}$ is the same as chordal $\SLE_{\kappa}$ as desired.
\end{proof}

\begin{lem}\label{lem::SLEradalTochordal}
    Fix $\kappa\in (0,4]$ and $\theta_1<\theta_2<\theta_1+2\pi$. We assume $\eta^{(1)}\in \mf X(\m D; \ee^{\ii\theta_1}; 0)$, namely a simple curve  in $\m D$ from $\ee^{\ii \t_1}$ to $0$. 
    \begin{itemize}
        \item For $t_1 \in (0,\infty)$, we denote by $Q_{t_1}$ the law of a radial $\SLE_{\kappa}(2)$ in $\m D\setminus\eta^{(1)}_{[0,t_1]}$ from $\ee^{\ii\theta_2}$ to $0$ with force point $\eta^{(1)}_{t_1}$. 
        \item We denote by $Q_{\infty}$ the conditional law of a chordal $\SLE_{\kappa}$ in $(\m D\setminus\eta^{(1)}; \ee^{\ii\theta_2}, 0)$. 
    \end{itemize}
    Suppose $U$ is any neighborhood of $\ee^{\ii\theta_2}$ in $\m D\setminus\eta^{(1)}$ such that there is a positive distance between $U$ and $\eta^{(1)}$ and let $\tau$ be the first time $\eta^{(2)}$ exits $U$. Then  the law of the curve $\eta^{(2)}$ restricted to $[0,\tau]$ under $Q_{t_1}$ and $Q_\infty$ are absolutely continuous.  We have
    \begin{align*}
        \lim_{t_1\to \infty}\frac{\ud Q_{t_1}}{\ud Q_{\infty}}\left(\eta^{(2)}_{[0,\tau]}\right)=1,\quad Q_{\infty}-a.s.
    \end{align*}
\end{lem}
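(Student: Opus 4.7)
The plan is to realize both $Q_{t_1}$ and $Q_\infty$, on any finite intrinsic capacity interval $[0, t_2]$, as absolutely continuous changes of measure from a common reference law $Q_0$, taken to be the law of radial $\SLE_\kappa$ in $(\m D; \ee^{\ii \theta_2}; 0)$, and then show that the ratio of the resulting densities tends almost surely to $1$ as $t_1 \to \infty$.

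For $Q_{t_1}$, Lemma~\ref{lem::radialSLE_boundary_perturb} applied with $K_{t_1} := \eta^{(1)}_{[0,t_1]}$ yields the explicit martingale density
\[
\frac{\ud Q_{t_1}}{\ud Q_0}\bigg|_{\eta^{(2)}_{[0,t_2]}} = \phi_{t_2, K_{t_1}}'\!\left(\xi^{(2)}_{t_2}\right)^{h} g_{t_2, K_{t_1}}'(0)^{\tilde{h}} \exp\!\left(\tfrac{c}{2}\, m_{\m D}\!\left(\eta^{(2)}_{[0, t_2]},\, K_{t_1}\right)\right),
\]
where $\xi^{(2)}$ is the driving function of $\eta^{(2)}$ parametrized by radial capacity in $\m D$. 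For $Q_\infty$, I would combine the radial-to-chordal comparison recalled in Appendix~\ref{app:radialtochordal} (valid on the full $Q_\infty$-measure event that $\eta^{(2)}_{[0, t_2]}$ stays in $\m D\setminus\eta^{(1)}$) with the standard chordal boundary perturbation relative to the full curve $\eta^{(1)}$. This gives an explicit density for $\ud Q_\infty/\ud Q_0$ of the same structural form, with $K_{t_1}$ replaced by $\eta^{(1)}$ and the interior-target normalization $g_{t_2, K_{t_1}}'(0)^{\tilde{h}}$ replaced by the corresponding chordal boundary derivative at the prime end $0\in\partial(\m D\setminus\eta^{(1)})$.

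Taking the ratio, $\ud Q_{t_1}/\ud Q_\infty$ then splits into three factors, each to be analyzed as $t_1\to\infty$. First, by Koebe's $1/4$ theorem $\eta^{(2)}_{[0,t_2]}$ stays at distance at least $\ee^{-t_2}/4$ from $0$, so the tail $\eta^{(1)}\setminus K_{t_1}$ is eventually disjoint from any fixed neighborhood of $\eta^{(2)}_{[0,t_2]}$; Carathéodory kernel convergence then yields uniform convergence of $\phi_{t_2, K_{t_1}}$ and its derivatives to $\phi_{t_2, \eta^{(1)}}$ on $\eta^{(2)}_{[0,t_2]}$. Second, the loop-measure difference $m_{\m D}(\eta^{(2)}_{[0,t_2]},\, \eta^{(1)}\setminus K_{t_1})\to 0$ because $\eta^{(1)}\setminus K_{t_1}$ shrinks to $\{0\}$ while $\eta^{(2)}_{[0,t_2]}$ is bounded away from $0$, and the Brownian loop measure of loops intersecting two sets at positive distance is finite and continuous in the sets. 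Third, the factor $g_{t_2, K_{t_1}}'(0)^{\tilde{h}}$ itself diverges but must be cancelled precisely by the boundary-derivative normalization coming from the chordal density $\ud Q_\infty/\ud Q_0$.

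The main obstacle is this last cancellation: showing that in the Carathéodory limit where the target $0$ passes from an interior point of $\m D\setminus K_{t_1}$ to a boundary prime end of $\m D\setminus\eta^{(1)}$, the interior normalization $g_{t_2, K_{t_1}}'(0)^{\tilde{h}}$ combines with the radial-to-chordal change-of-measure martingale of Appendix~\ref{app:radialtochordal} to exactly match the chordal boundary normalization for $Q_\infty$. I expect to establish this by tracking the explicit conformal radii together with the derivative of the uniformizing map of $\m D\setminus\eta^{(1)}_{[0,t_1]}$ at $0$ along the direction of $\eta^{(1)}$, and then applying a $Q_\infty$-a.s.\ dominated convergence argument using continuity of the conformal maps off the limiting curve.
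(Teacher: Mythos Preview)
Your approach differs substantially from the paper's, and the difference matters. The paper does not use radial $\SLE_\kappa$ in $(\m D; \ee^{\ii\theta_2}; 0)$ as a common reference. Instead, it uniformizes both domains to $\HH$: it fixes conformal maps $f_{t_1}:\m D\setminus\eta^{(1)}_{[0,t_1]}\to\HH$ and $f_\infty:\m D\setminus\eta^{(1)}\to\HH$ sending $\ee^{\ii\theta_2}\mapsto 0$, $\eta^{(1)}_{t_1}\mapsto -1$ (resp.\ $0\mapsto -1$), and chosen so that $f_{t_1}\to f_\infty$ locally uniformly. Then $(f_{t_1})_*Q_{t_1}$ is radial $\SLE_\kappa$ in $(\HH;0;w)$ with $w=f_{t_1}(0)$, while $(f_\infty)_*Q_\infty$ is chordal $\SLE_\kappa$ in $(\HH;0,-1)$. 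Both are tilted versions of chordal $\SLE_\kappa$ in $(\HH;0,\infty)$ via the explicit Schramm--Wilson martingales \eqref{eqn::app_mart1}--\eqref{eqn::app_mart2}, and their ratio $M_t$ is an explicit function of $W_t, g_t(-1), g_t(w), g_t'(-1), g_t'(w)$ that visibly tends to $1$ as $w\to -1$. No degeneration occurs because the interior target $w$ and the boundary target $-1$ sit at comparable finite positions throughout.

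Your route, by contrast, keeps the reference as radial $\SLE_\kappa$ in the full disc with interior target $0$, and this forces you into exactly the degeneration you flag as the ``main obstacle'': $g_{t_2,K_{t_1}}'(0)^{\tilde h}$ blows up as $K_{t_1}\uparrow\eta^{(1)}$, and must be cancelled against a matching divergence in $\ud Q_\infty/\ud Q_0$. But your description of that second density is not concrete. You invoke ``the radial-to-chordal comparison recalled in Appendix~\ref{app:radialtochordal}'' together with ``chordal boundary perturbation relative to $\eta^{(1)}$'', yet the natural intermediate measures here are problematic (chordal $\SLE_\kappa$ in $\m D$ from $\ee^{\ii\theta_2}$ to the \emph{interior} point $0$ is not defined; radial $\SLE_\kappa$ in $\m D\setminus\eta^{(1)}$ targeting the \emph{boundary} point $0$ is not defined either), and Appendix~\ref{app:radialtochordal} is the proof of this very lemma, so citing it is circular. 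Your plan to ``track explicit conformal radii together with the derivative of the uniformizing map along the direction of $\eta^{(1)}$'' amounts to rederiving by hand the cancellation that the paper's $\HH$-picture makes automatic. That is the entire content of the lemma, and you have not yet specified a well-defined formula for $\ud Q_\infty/\ud Q_0$ to cancel against. (A smaller point: your displayed formula for $\ud Q_{t_1}/\ud Q_0$ omits the normalization by $M_0$ from Lemma~\ref{lem::radialSLE_boundary_perturb}.) The paper's uniformization trick is what you are missing: it converts the singular interior-to-boundary limit into the regular limit $w\to -1$ with an explicit, bounded Radon--Nikodym derivative.
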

\begin{proof}
The convergence of radial $\SLE_{\kappa}$ in $(\m D\setminus\eta^{(1)}_{[0,t_1]}; \ee^{\ii\theta_2}; 0)$ to chordal $\SLE_{\kappa}$ in $(\m D\setminus\eta^{(1)}; \ee^{\ii\theta_2}, 0)$ is well known, see e.g. \cite[Lem.~3.2]{Viklund_LERWE} and \cite{Lawler_Lind}. In our setup, we need the convergence of radial $\SLE_{\kappa}(2)$ process. 

We fix a sequence of conformal maps $f_{t_1}: \m D\setminus\eta^{(1)}_{[0,t_1]}\to \m D$ and a conformal map $f_{\infty}: \m D\setminus\eta^{(1)}\to \m D$ such that 
\[f_{t_1}(\eta^{(1)}_{t_1})=\ee^{\ii\theta_1}, \quad f_{t_1}(\ee^{\ii\theta_2})=\ee^{\ii\theta_2}, \qquad f_{\infty}(0)=\ee^{\ii\theta_1}, \quad f_{\infty}(\ee^{\ii\theta_2})=\ee^{\ii\theta_2};\]
 and that $f_{t_1}$ converges to $f_{\infty}$ locally uniformly. Note that $(f_{t_1})_*(Q_{t_1})$ is the same as radial $\SLE_{\kappa}(2)$ in $\m D$ from $\ee^{\ii\theta_2}$ to $w=f_{t_1}(0)$ with force point $\ee^{\ii\theta_1}$, and $(f_{\infty})_*(Q_{\infty})$ is the same as chordal $\SLE_{\kappa}$ in $(\m D; \ee^{\ii\theta_2}, \ee^{\ii\theta_1})$.  

The law of radial $\SLE_{\kappa}(2)$ in $\m D$ from $\ee^{\ii\theta_2}$ to $w$ with force point $\ee^{\ii\theta_1}$ is the same as the law of radial 
$\SLE_{\kappa}$ in $(\m D; \ee^{\ii\theta_2}; 0)$ weighted by the local martingale~\cite{SW05}:
\begin{align}\label{eqn::radialtochordal_aux1}
&g_t'(0)^{-\tilde{h}}\phi_t'(\theta_1)^h\left(\sin((\xi_t-\phi_t(\theta_1))/2)\right)^{\frac{2}{\kappa}}\notag\\
&\times|g_t'(w)|^{\tilde{h}+\frac{3}{2\kappa}}(1-|g_t(w)|^2)^{\frac{(\kappa-8)^2}{8\kappa}}|g_t(w)-\ee^{\ii\xi_t}|^{1-\frac{8}{\kappa}}|g_t(w)-\ee^{\ii\phi_t(\theta_1)}|^{1-\frac{8}{\kappa}}.
\end{align}
The law of chordal $\SLE_{\kappa}$ in $(\m D; \ee^{\ii\theta_2}, \ee^{\ii\theta_1})$ is the same as the law of radial $\SLE_{\kappa}$ in $(\m D; \ee^{\ii\theta_2}; 0)$ weighted by the local martingale~\cite{SW05}:
\begin{equation}\label{eqn::radialtochordal_aux2}
g_t'(0)^{-\tilde{h}}\phi_t'(\theta_1)^h\left(\sin((\xi_t-\phi_t(\theta_1))/2)\right)^{-2h}. 
\end{equation}
Combining~\eqref{eqn::radialtochordal_aux1} and~\eqref{eqn::radialtochordal_aux2}, the law $(f_{t_1})_*(Q_{t_1})$ is the same as $(f_{\infty})_*(Q_{\infty})$ weighted by the local martingale
\begin{align}\label{eqn::radialtochordal_aux3}
M_t(w)=&\left(\frac{\sin((\xi_t-\phi_t(\theta_1))/2)}{\sin((\theta_2-\theta_1)/2)}\right)^{\frac{8}{\kappa}-1}|g_t'(w)|^{\tilde{h}+\frac{3}{2\kappa}}\left(\frac{1-|g_t(w)|^2}{1-|w|^2}\right)^{\frac{(\kappa-8)^2}{8\kappa}}\notag\\
&\times\left|\frac{g_t(w)-\ee^{\ii\xi_t}}{w-\ee^{\ii\theta_2}}\right|^{1-\frac{8}{\kappa}}\left|\frac{g_t(w)-\ee^{\ii\phi_t(\theta_1)}}{w-\ee^{\ii\theta_1}}\right|^{1-\frac{8}{\kappa}}. 
\end{align}
As $t_1\to\infty$, we have $w=f_{t_1}(0)\to \ee^{\ii\theta_1}$. Thus,
\begin{align*}
   & |g_t'(w)|\to |g_t'(\ee^{\ii\theta_1})|, \quad \frac{1-|g_t(w)|^2}{1-|w|^2}\to |g_t'(\ee^{\ii\theta_1})|, \\
   & \left|\frac{g_t(w)-\ee^{\ii\xi_t}}{w-\ee^{\ii\theta_2}}\right|\to \frac{\sin((\xi_t-\phi_t(\theta_1))/2)}{\sin((\theta_2-\theta_1)/2)}, \quad \left|\frac{g_t(w)-\ee^{\ii\phi_t(\theta_1)}}{w-\ee^{\ii\theta_1}}\right|\to |g_t'(\ee^{\ii\theta_1})|.
\end{align*}
Consequently, $M_t(w)\to 1$ almost surely. These give the desired conclusion. 
\end{proof}

\begin{lem}\label{lem::Rt_limit}
Assume the same notations as in the proof of Theorem~\ref{thm::resampling_property} and recall that $R_{\mbt}$ is defined in~\eqref{eqn::2SLE_conditional_RN3}. We have 
\begin{align*}
    \lim_{t_1\to\infty}R_{\mbt}=1,\quad a.s.
\end{align*}
\end{lem}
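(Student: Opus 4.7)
The goal is to prove that each of the three factors in
\[ R_{\mbt}=g_{\mbt, 1}'(0)^{\frac{3-\mu^2}{2\kappa}}\;\frac{\LG_{\mu}(\theta_{\mbt}^{(1)}, \theta_{\mbt}^{(2)})}{\LG_{\mu}(\xi_{t_1}^{(1)}, \phi_{t_1}^{(1)}(\theta_2))}\;\phi_{\mbt, 1}'(\xi_{t_1}^{(1)})^h \]
converges almost surely to $1$ as $t_1\to\infty$. The unifying mechanism is that the image hull $K_{t_1}:=g_{t_1}^{(1)}(\eta^{(2)}_{[0,t_2]})$ has radial capacity tending to zero, which forces the normalized conformal map $g_{\mbt,1}\colon\m D\setminus K_{t_1}\to\m D$ to approach the identity of $\m D$.

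\textbf{Shrinking of the hull.} Under $\PP(\LG_\mu)$, Corollary~\ref{cor::2SLEspiral_marginal} identifies the marginal of $\eta^{(1)}$ with radial $\SLE_\kappa^\mu(2)$, which by Lemma~\ref{lem::radialSLE_continuity} is a.s.\ a continuous curve from $\ee^{\ii\theta_1}$ to $0$; moreover, the indicator in~\eqref{eqn::2SLE_mart_new} forces $\eta^{(1)}\cap\eta^{(2)}_{[0,t_2]}=\emptyset$ a.s. Since $g_{t_1}^{(1)}$ is a hyperbolic isometry between $\m D\setminus\eta^{(1)}_{[0,t_1]}$ and $\m D$ fixing $0$,
\[ d^{\mathrm{hyp}}_{\m D}(0, K_{t_1}) = d^{\mathrm{hyp}}_{\m D\setminus\eta^{(1)}_{[0,t_1]}}(0, \eta^{(2)}_{[0,t_2]})\xrightarrow[t_1\to\infty]{}\infty, \]
because $0$ becomes a boundary prime end of $\m D\setminus\eta^{(1)}$ while $\eta^{(2)}_{[0,t_2]}$ stays at positive Euclidean distance from $\eta^{(1)}$. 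A Beurling-type comparison then gives $\capa(K_{t_1})\to 0$ a.s., which yields $g_{\mbt,1}'(0)=\exp(\capa(K_{t_1}))\to 1$ and, by Carath\'eodory/kernel convergence, $g_{\mbt,1}\to\mathrm{id}_{\m D}$ locally uniformly away from the attachment point $\ee^{\ii V_{t_1}^{(2)}}=g_{t_1}^{(1)}(\ee^{\ii\theta_2})$.

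\textbf{Term-by-term analysis.} Standard estimates for small-capacity Loewner hulls give, uniformly in $w\in\partial\m D$ with $|w-V_{t_1}^{(2)}|\ge\delta$,
\[ |\phi_{\mbt,1}(w) - w| + |\phi_{\mbt,1}'(w) - 1| \;\le\; C\,\capa(K_{t_1})/\delta^2. \]
Since $\xi_{t_1}^{(1)}$ and $V_{t_1}^{(2)} = \phi_{t_1}^{(1)}(\theta_2)$ both remain in $(0,2\pi)$ a.s.\ (the force point is never swallowed), applying the estimate at $w_1=\xi_{t_1}^{(1)}$ and $w_2=V_{t_1}^{(2)}$ gives $\theta_{\mbt}^{(j)} - w_j=\phi_{\mbt,1}(w_j)-w_j\to 0$ for $j=1,2$, so by continuity of $\sin$ and $\exp$ the $\LG_\mu$-ratio converges to $1$, and $\phi_{\mbt,1}'(\xi_{t_1}^{(1)})\to 1$. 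Combined with $g_{\mbt,1}'(0)\to 1$, this gives $R_{\mbt}\to 1$.

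\textbf{Main obstacle.} The delicate point is that the Loewner estimate of the previous paragraph degenerates as the arc-length distance $\delta$ between $\xi_{t_1}^{(1)}$ and the attachment point $V_{t_1}^{(2)}$ of $K_{t_1}$ shrinks, and under the $\SLE_\kappa^\mu(2)$ dynamics the gap $V_t^{(2)}-\xi_t^{(1)}$ follows a Bessel-like diffusion on $(0,2\pi)$ that may come arbitrarily close to $0$ or $2\pi$ along subsequences. The essential task is therefore to show that the exponential smallness of $\capa(K_{t_1})$ (which reflects the decay $\CR(\m D\setminus\eta^{(1)}_{[0,t_1]},0)=e^{-t_1}$) beats the polynomial degeneration of $\delta$; this is easiest to see by pulling the estimates back through the isometry $g_{t_1}^{(1)}$, where the relevant geometric quantity is the fixed positive Euclidean distance between $\eta^{(2)}_{[0,t_2]}$ and $\eta^{(1)}$, a quantity that is stable under the $t_1\to\infty$ limit.
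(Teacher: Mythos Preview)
Your overall architecture matches the paper's: show the mapped-out hull $K_{t_1}=g_{t_1}^{(1)}(\eta^{(2)}_{[0,t_2]})$ shrinks, deduce that $g_{\mbt,1}$ is close to the identity, and conclude that each factor of $R_{\mbt}$ tends to $1$. However, the part you label \emph{Main obstacle} is precisely where the proof stands or falls, and your treatment of it is both incomplete and based on a false premise.

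You assert that the gap process $\Delta_{t_1}:=V_{t_1}^{(2)}-\xi_{t_1}^{(1)}$ ``may come arbitrarily close to $0$ or $2\pi$ along subsequences'' and then try to argue, without details, that the exponential decay of $\capa(K_{t_1})$ beats the ``polynomial degeneration'' of $\delta$. But for $\kappa\in(0,4]$ and force-point weight $2$, the process $\Delta_{t_1}$ is \emph{not} recurrent to the boundary: near $0$ it is absolutely continuous with respect to a Bessel process of dimension $1+8/\kappa\ge 3$, and the paper's first step is exactly a Borel--Cantelli argument (via a scale function $f$ with $f(\Delta_t)$ a local martingale) showing that almost surely there exists $n_0$ with $\Delta_t\in(2^{-n_0},2\pi-2^{-n_0})$ for all $t$. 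Once this is established there is no competition at all: $\ee^{\ii\xi_{t_1}^{(1)}}$ stays in a fixed compact subset of $\partial\m D$ away from the shrinking hull, and the Carath\'eodory convergence (extended to the boundary by Schwarz reflection) gives $\phi_{\mbt,1}'(\xi_{t_1}^{(1)})\to1$ and $\theta_{\mbt}^{(j)}\to$ their unmapped values directly. Your suggested route of ``pulling estimates back through $g_{t_1}^{(1)}$'' is not fleshed out and, without a uniform lower bound on $\Delta_{t_1}$, does not obviously close: the small-capacity Loewner estimate $C\,\capa(K_{t_1})/\delta^2$ is only useful if you control $\delta$.

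A smaller point: your identification $\theta_{\mbt}^{(2)}=\phi_{\mbt,1}(V_{t_1}^{(2)})$ is not correct. By definition $\theta_{\mbt}^{(2)}=\phi_{\mbt,2}(\xi_{t_2}^{(2)})$, which is the angular coordinate of $g_{\mbt,1}$ applied to the \emph{tip} $g_{t_1}^{(1)}(\eta^{(2)}_{t_2})$ of $K_{t_1}$, not to its base point $\ee^{\ii V_{t_1}^{(2)}}$. This is harmless once one knows $\operatorname{diam}(K_{t_1})\to0$, but it should be stated correctly.
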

\begin{proof}
We write $I_{t_1}$ for the arc in $\partial \m D$ that is the image of both sides of $\eta^{(1)}_{[0,t_1]}$ under the conformal map $g_{t_1}^{(1)}$ extended to the boundary, see Figure~\ref{fig::gtcommutation}. It is easy to see the harmonic measure of $\partial \m D$ seen from $0$ of the domain $\m D \setminus \eta^{(1)}_{[0,t_1]}$ is decreasing to $0$ as $t_1 \to \infty$. Therefore,
$$|I_{t_1}^c| = |\partial \m D \setminus I_{t_1}| \xrightarrow[]{t_1 \to \infty } 0$$
where $I_{t_1}^c = \partial \m D \setminus I_{t_1}$.

Lemma~\ref{lem::radialSLE_continuity} shows that $\eta^{(2)}_{[0,t_2]}$ is at positive distance from $\eta^{(1)}_{[0,\infty)}$. Hence, there exists $\lambda \in (0,1)$ such that the neighborhood 
$$ U = \left\{z \in \m D \,|\, \m P_z\left( \beta \text{ hits } \eta^{(1)}_{[0,\infty)} \text{ before exiting } \m D\right) \ge \lambda\right\}$$
satisfies $\eta^{(2)}_{[0,t_2]} \cap U = \emptyset$, where $\m P_z$ is the law of a two-dimensional Brownian motion $\beta$ starting from $z$. 
Let 
$$U_{t_1} = \left\{z \in \m D \,|\, \m P_z\left( \beta \text{ hits } \eta^{(1)}_{[0,t_1)} \text{ before exiting } \m D\right) \ge \lambda\right\} \subset U.$$ 
The image $\tilde U_{t_1} := g_{t_1}^{(1)} (U_{t_1})$ is bounded by $I_{t_1}$ and a circular arc $\a_{t_1}$ meeting the endpoints of $I_{t_1}$ with angle $\l/\pi$. 
Since $|I^c_{t_1}| \to 0$, the diameter of the domain $\m D \setminus \tilde U_{t_1}$, which contains $\tilde \eta^{(2)}_{[0,t_2]} = g_{t_1}^{(1)} \left(\eta^{(2)}_{[0,t_2]}\right)$, converges to $0$. 

\begin{figure}[ht!]
    \begin{center}
    \includegraphics[width=0.95\textwidth]{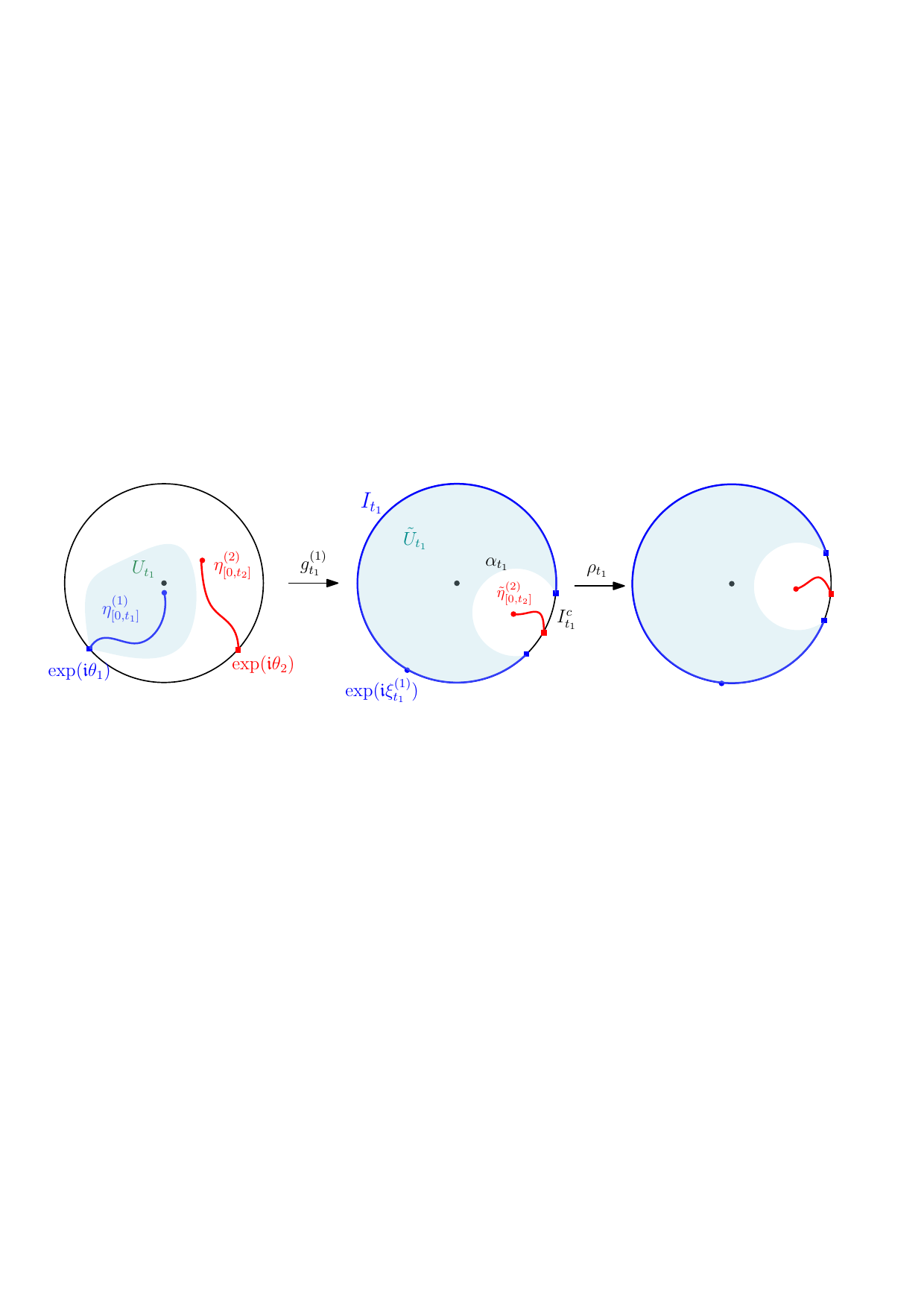}
    \end{center}
    \caption{Illustration of the domains $U_{t_1}$ and $\tilde U_{t_1} := g_{t_1}^{(1)} (U_{t_1})$ and the rotation map $\rho_{t_1}$.}
    \label{fig::limit}
\end{figure}

Recall that the map $g_{\mbt,1}$ maps out the curve $\tilde \eta^{(2)}_{[0,t_2]}=g_{t_1}^{(1)}\left(\eta_{[0,t_2]}^{(2)}\right)$.
If we conjugate $g_{\mbt,1}$ by the rotation $\rho_{t_1} :\m D\to \m D$ such that the image of the mid-point of $\a_{t_1}$ under $\rho_{t_1}$ lies in $(0,1)$ (so that $R(\a_{t_1})$ is symmetric with respect to the real line and $\rho_{t_1} (\a_{t_1})$ shrinks to the point $1 \in \partial \m D$), the map $\tilde g_{\mbt,1} := \rho_{t_1} \circ g_{\mbt,1} \circ \rho_{t_1}^{-1}$ converges in Carath\'eodory topology (namely, uniformly on compact subsets) to the identity map in $\m D$ as $t_1 \to \infty$. If we Schwarz-reflect $g_{\mbt,1}$ along $\partial \m D \setminus g_{t_1}^{(1)} (\exp(\ii \t_2))$, we see that the convergence also extends to the boundary, more precisely, we obtain that $\tilde g_{\mbt,1}$ converges uniformly on all compact subsets of $\ad {\m D} \setminus \{1\}$ (and the map is well-defined on every such compact subset for large enough $t_1$), so do the derivatives of $\tilde g_{\mbt,1}$ with respect to $z$. Thus, we obtain that $R_{\mbt} \to 1$ almost surely, which completes the proof.
\end{proof}




\subsection{Chordal SLE weighted by conformal radius}
\label{subsec::chordalSLE_CR}
In this section, we show that the partition functions $\mc Z_\a$ correspond to the chordal SLE weighted by the conformal radius to the power $-\a$.  

For this, we first calculate the Laplace transform of the conformal radius of the complement of chordal SLE. Usually, chordal SLE is defined in the upper-half plane. It is more convenient here to describe it in the unit disc $\m D$ via a change of coordinate. 
Fix $\kappa\in (0,8)$ and $\theta_1<\theta_2<\theta_1+2\pi$. Suppose $\gamma$ is chordal $\SLE_{\kappa}$ in $(\m D; \ee^{\ii\theta_1}, \ee^{\ii\theta_2})$. 
We parameterize it by the capacity and define $g_t, \xi_t$ accordingly as in Section~\ref{subsec::radialLoewner}.  Denote by $T$ the first time $\gamma$ disconnects $\ee^{\ii\theta_2}$ from the origin. A chordal $\SLE_{\kappa}$ in $(\m D; \ee^{\ii\theta_1}, \ee^{\ii\theta_2})$, up to $T$, has the same law as radial $\SLE_{\kappa}(\kappa-6)$ starting from $\ee^{\ii\theta_1}$ with force point $\ee^{\ii\theta_2}$, up to the same time, see~\cite{SW05}. In other words, its driving function $\xi_t$ solves the following SDE: 
\begin{equation}\label{eqn::chordalSLE_radial}
    \begin{cases}
    \xi_0=\theta_1, V_0=\theta_2,\\
    \ud \xi_t=\sqrt{\kappa}\ud B_t+\dfrac{\kappa-6}{2}\cot\left((\xi_t-V_t)/2\right)\ud t,\\
    \ud V_t=\cot\left((V_t-\xi_t)/2\right)\ud t. 
    \end{cases}
\end{equation}
Note that the conformal radius $\CR(\m D\setminus\gamma)$ is the same as $\ee^{-T}$; thus its Laplace transform can be derived from the SDE~\eqref{eqn::chordalSLE_radial}.  

\begin{lem}\label{lem::CR_expectation}
Fix $\kappa\in (0,8)$ and $\theta_1<\theta_2<\theta_1+2\pi$. We denote $\theta=\theta_2-\theta_1\in (0,2\pi)$. Suppose $\gamma$ is chordal $\SLE_{\kappa}$ in $(\m D; \ee^{\ii \theta_1}, \ee^{\ii\theta_2})$ and denote by $\E_{\theta}$ the expectation with respect to $\gamma$. 
Denote by $\CR(\m D\setminus\gamma)$ the conformal radius of $\m D\setminus\gamma$ seen from the origin. For $\alpha\in \mathbb{R}$, 
we define 
\begin{equation}\label{eqn::CR_expectation}
\Phi(\kappa, \alpha; u):=\E_{\theta}\left[\CR(\m D\setminus\gamma)^{-\alpha}\right],\quad \text{where }u=\left(\sin(\theta/4)\right)^2\in (0,1).
\end{equation}
Then $\Phi(\kappa, \alpha; u)$ is finite for $u\in (0,1)$ if only and if $\alpha<1-\kappa/8$. Moreover, when $\alpha<1-\kappa/8$, $\Phi(u)=\Phi(\kappa, \alpha; u)$ satisfies the following ODE
\begin{equation}\label{eqn::CR_ODE}
    u(1-u)\Phi''+\frac{3\kappa-8}{2\kappa}(1-2u)\Phi'+\frac{8\alpha}{\kappa}\Phi=0,
\end{equation}
and the symmetry
\begin{equation}\label{eqn::CR_sym}
    \Phi(u)=\Phi(1-u),\quad u\in(0,1).
\end{equation}
\end{lem}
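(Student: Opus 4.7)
The first step is to reduce to a one-dimensional autonomous diffusion. Setting $\Theta_t := V_t - \xi_t$, the SDE \eqref{eqn::chordalSLE_radial} yields
\[
 d\Theta_t = -\sqrt{\kappa}\, dB_t + \tfrac{\kappa-4}{2}\cot(\Theta_t/2)\, dt,
\]
so $\Theta$ is an autonomous diffusion on $(0,2\pi)$ with generator $\mathcal{A} = \tfrac{\kappa}{2}\partial_\theta^2 + \tfrac{\kappa-4}{2}\cot(\theta/2)\partial_\theta$. The disconnection time $T$ is exactly the first exit time of $(0,2\pi)$ by $\Theta$, and since $g_T'(0) = e^T$, we have $\CR(\m D\setminus\gamma) = e^{-T}$, so $\Phi(\kappa,\alpha;u) = \E_\theta[e^{\alpha T}]$ depends only on $\theta = \theta_2 - \theta_1 \in (0,2\pi)$, equivalently on $u=\sin^2(\theta/4)$.

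Assuming finiteness on $(0,2\pi)$, Feynman--Kac gives $\mathcal{A}\Phi + \alpha \Phi = 0$, and a direct change of variables $u = \sin^2(\theta/4)$ using $u'(\theta) = \tfrac14 \sin(\theta/2)$, $\sin^2(\theta/2)=4u(1-u)$, $\cos(\theta/2)=1-2u$ converts this into \eqref{eqn::CR_ODE}. The symmetry \eqref{eqn::CR_sym} follows from invariance of chordal SLE under the anti-conformal reflection of $\m D$ that fixes $0$ and swaps $\ee^{\ii\theta_1}$ with $\ee^{\ii\theta_2}$: this preserves the law of $\gamma$ (hence of $\CR(\m D\setminus\gamma)$) and sends $\theta \mapsto 2\pi - \theta$, i.e.\ $u \mapsto 1-u$.

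For the finiteness threshold, the main idea is to identify the principal Dirichlet eigenvalue of $-\mathcal{A}$ on $(0,2\pi)$ explicitly. The ansatz $f_a(\theta) = \sin^a(\theta/2)$ gives, after a short computation,
\[
 \mathcal{A} f_a \;=\; \tfrac{a(\kappa a +\kappa - 8)}{8}\sin^{a-2}(\theta/2)\,-\,\tfrac{a(\kappa a +2\kappa-8)}{8}\sin^a(\theta/2),
\]
which is proportional to $f_a$ exactly when $a=(8-\kappa)/\kappa$, in which case $\mathcal{A}f_a = -(1-\kappa/8)\,f_a$. Since $f_a$ is positive on $(0,2\pi)$ and vanishes at the endpoints, Krein--Rutman identifies $\lambda_0 := 1 - \kappa/8$ as the principal Dirichlet eigenvalue. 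Standard $1$D Sturm--Liouville spectral theory then gives $\PP_\theta[T > t] \asymp f_a(\theta)\, e^{-\lambda_0 t}$, so $\E_\theta[e^{\alpha T}] < \infty$ if and only if $\alpha < 1 - \kappa/8$.

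The step that requires the most care is establishing the two-sided bound on $\PP_\theta[T > t]$. The lower bound $\PP_\theta[T>t] \geq c\,f_a(\theta)\,e^{-\lambda_0 t}$ follows cleanly from the Doob $h$-transform with $h = f_a$: the resulting tilted diffusion on $(0,2\pi)$ is recurrent with infinite lifetime, so $\E_\theta[\mathbf 1_{\{T>t\}} f_a(\Theta_t) e^{\lambda_0 t}] = f_a(\theta)$ for all $t$, and since $f_a$ is bounded above this forces $\PP_\theta[T>t] \gtrsim e^{-\lambda_0 t}$, establishing necessity of $\alpha < \lambda_0$. The matching upper bound (for sufficiency) is more delicate but standard, obtainable either from the spectral gap of the compact resolvent of $\mathcal{A}$ on $L^2((0,2\pi), m)$ with the speed measure $m$, or, more in the spirit of the paper, by directly invoking Lemma~\ref{lem::Euler_initial} on the hypergeometric ODE \eqref{eqn::CR_ODE}: the unique solution satisfying the boundary condition $\Phi(0^+) = \Phi(1^-) = 1$ exists as a finite positive function precisely when $\alpha < 1 - \kappa/8$, exactly matching the threshold for the positivity statement of $\phi_\alpha$ used in Theorem~\ref{thm:classify_int}.
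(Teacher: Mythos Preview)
Your approach differs substantively from the paper's. The paper works with two explicit hypergeometric solutions $f_1,f_2$ of~\eqref{eqn::CR_ODE}, shows that $e^{\alpha t}f_j\bigl(\sin^2(\theta_t/4)\bigr)$ are bounded martingales up to $T$ (for generic $\kappa$ with $C\notin\mathbb Z$ and $\alpha\in(0,1-\kappa/8)$), and applies optional stopping to obtain a closed formula for $\E_\theta[e^{\alpha T}]$; the threshold then falls out by sending $\alpha\uparrow 1-\kappa/8$ in that formula and watching it blow up. Your spectral route---recognizing $\sin^{(8-\kappa)/\kappa}(\theta/2)$ as the principal Dirichlet eigenfunction of $-\mathcal A$ with eigenvalue $\lambda_0=1-\kappa/8$---is more conceptual and explains directly \emph{why} the threshold sits where it does, without ever computing $\Phi$.

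Your necessity argument via the $h$-transform is correct: the tilted drift becomes $2\cot(\theta/2)$, which near the endpoints behaves like a Bessel process of dimension $1+8/\kappa>2$, hence non-explosive, and the martingale identity $\E_\theta[\mathbf 1_{\{T>t\}}f_a(\Theta_t)]e^{\lambda_0 t}=f_a(\theta)$ together with $f_a\le 1$ gives the lower tail bound. The sufficiency direction, however, has a real gap. The supermartingale bound on $\E_\theta[\mathbf 1_{\{T>t\}}f_a(\Theta_t)]$ does not yield $\PP_\theta[T>t]\le Ce^{-\lambda_0 t}$, since $f_a$ vanishes at the boundary. Your appeal to Lemma~\ref{lem::Euler_initial} is also misdirected: that lemma normalizes the solution by $\phi(1/2)=1$, $\phi'(1/2)=0$, not by boundary values at $0$ and $1$, and in any case exhibiting a positive ODE solution does not by itself prove it equals $\E_\theta[e^{\alpha T}]$---one still needs a \emph{bounded} solution plus optional stopping, which is precisely what the paper's hypergeometric functions supply (they are finite at both endpoints when $\alpha<1-\kappa/8$). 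If you wish to stay within the spectral framework, you must actually carry out the compact-resolvent argument for the singular operator, or else produce the bounded solution some other way.

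A smaller point: your Feynman--Kac step presumes $\Phi\in C^2$, which is not clear a priori. The paper handles this by noting that $e^{\alpha t}\Phi\bigl(\sin^2(\theta_t/4)\bigr)=\E_\theta[e^{\alpha T}\mid\gamma_{[0,t]}]$ is a martingale, so $\Phi$ is a weak solution of the radial BPZ equation, and then invoking hypoellipticity (Remark~\ref{rem::hypoelliptic}) to upgrade to a classical solution.
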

\begin{proof}
We first show that $\Phi(\kappa, \alpha; u)$ is finite as long as $\alpha<1-\kappa/8$. This is done in~\cite[Proof of Prop.\,3.5]{PeteWuSLEexcursions}. For the reader's convenience, we summarize its proof here.

When $\a \le 0$, since $\CR (\m D \setminus \g) \le 1$ by Schwarz lemma, we obtain immediately that $\Phi(\k,\a;u) < \infty$.

When $\alpha\in (0, 1-\kappa/8)$, we will derive $\Phi$ in terms of hypergeometric functions.  
We set
\begin{equation*}
    A=1-\frac{4}{\kappa}+\sqrt{\left(1-\frac{4}{\kappa}\right)^2+\frac{8\alpha}{\kappa}}, \quad B=1-\frac{4}{\kappa}-\sqrt{\left(1-\frac{4}{\kappa}\right)^2+\frac{8\alpha}{\kappa}},\quad C=\frac{3}{2}-\frac{4}{\kappa}.
\end{equation*}
Assume $C\not\in\mathbb{Z}$ and define 
\begin{align*}
    f_1(u):=\hF(A, B, C; u), \quad f_2(u):=u^{1-C}\hF(1+A-C, 1+B-C, 2-C, u),
\end{align*}
where $\hF$ is the hypergeometric function (see e.g.~\cite[Eq.(15.1.1)]{Abramowitz}). Note that $f_1, f_2$ are two linearly independent solutions to ODE~\eqref{eqn::CR_ODE}.
Let us check the values of $f_1,f_2$ at the endpoints $u=0$ or $u=1$. Since $\kappa\in (0,8)$ and $\alpha\in (0,1-\kappa/8)$ and $C\not\in\mathbb{Z}$, we have 
\begin{align*}
    A<1, \quad B\in (1-8/\kappa, 1-4/\kappa],\quad C\in (-\infty,1)\setminus\mathbb{Z}, \quad C>A+B.
\end{align*}
From~\cite[Eq.(15.1.20)]{Abramowitz}, we have
\begin{align*}
f_1(0)=&1,\quad f_1(1)=\frac{\Gamma(C)\Gamma(C-A-B)}{\Gamma(C-A)\Gamma(C-B)}=\frac{\cos\left(\pi\sqrt{\left(1-\frac{4}{\kappa}\right)^2+\frac{8\alpha}{\kappa}}\right)}{\cos\left(\pi\left(1-\frac{4}{\kappa}\right)\right)};\\
    f_2(0)=&0, \quad f_2(1)=\frac{\Gamma(2-C)\Gamma(1-C)}{\Gamma(1-A)\Gamma(1-B)}\in (0,\infty).
\end{align*}

We parameterize $\gamma$ by the capacity, then its driving function $\xi_t$ solves SDE~\eqref{eqn::chordalSLE_radial}. 
We denote $\theta_t=V_t-\xi_t$. The process $\theta_t$ satisfies the SDE:
\begin{equation}\label{eqn::SLE_radial_SDE}
  \ud \theta_t=\sqrt{\kappa}\ud B_t+\frac{\kappa-4}{2}\cot(\theta_t/2)\ud t.  
\end{equation}
The disconnection time $T$ is the first time that $\theta_t$ hits $0$ or $2\pi$. Suppose $f$ is an analytic function defined on $(0,1)$. Then $\ee^{\alpha t}f\left(\left(\sin(\theta_t/4)\right)^2\right)$ is a local martingale if and only if $f$ satisfies~\eqref{eqn::CR_ODE}. 
Since $f_1, f_2$ are solutions to this ODE, the processes 
\[\ee^{\alpha t}f_1\left(\left(\sin(\theta_t/4)\right)^2\right)\quad\text{and}\quad \ee^{\alpha t}f_2\left(\left(\sin(\theta_t/4)\right)^2\right) \]
are local martingales. 
These martingales are also considered in~\cite{SSW2009}. 
Since $f_1, f_2$ are finite at $u=0$ and $u=1$, and the lifetime $T$ has finite expectation, we may conclude that these two local martingales are martingales up to $T$. Then the optional stopping theorem gives 
\begin{align*}
\begin{cases}
    \E_{\theta}\left[\ee^{\alpha T}1_{\{\theta_T=0\}}\right]+f_1(1)\E_{\theta}\left[\ee^{\alpha T}1_{\{\theta_T=2\pi\}}\right]=f_1\left(\left(\sin(\theta/4)\right)^2\right);\\
    f_2(1)\E_{\theta}\left[\ee^{\alpha T}1_{\{\theta_T=2\pi\}}\right]=f_2\left(\left(\sin(\theta/4)\right)^2\right). 
    \end{cases}
\end{align*}
As $\CR(\m D\setminus\gamma)=\ee^{-T}$, the above relation gives 
\begin{equation}\label{eqn::CR_twocombination}
\mathbb{E}_{\theta}\left[\CR(\m D\setminus\gamma)^{-\alpha}\right]=f_1\left(\left(\sin(\theta/4)\right)^2\right)+\frac{1-f_1(1)}{f_2(1)}f_2\left(\left(\sin(\theta/4)\right)^2\right). 
\end{equation}
In particular, this implies that $\Phi(\kappa, \alpha; u)$ is finite for $u\in (0,1)$ when 
\[\kappa\in (0,8),\quad C=\frac{3}{2}-\frac{4}{\kappa}\not\in\mathbb{Z},\quad \alpha\in (0,1-\kappa/8).\]
As $\Phi(\kappa, \alpha; u)$ is continuous in $\kappa\in (0,8)$ and is increasing in $\alpha$, we conclude that $\Phi(\kappa, \alpha; u)$ is finite for $u\in (0,1)$ when \[\kappa\in (0,8), \quad \alpha<1-\kappa/8.\]

Moreover, when $\alpha\in (0,1-\kappa/8)$ and $C=3/2-4/\kappa \not\in \mathbb{Z}$, 
\[\Phi(u)=\Phi(\kappa, \alpha; u)=\E_{\theta}[\CR(\m D\setminus\gamma)^{-\alpha}]=f_1(u)+\frac{1-f_1(1)}{f_2(1)}f_2(u)\]
satisfies~\eqref{eqn::CR_ODE}. In fact, $\Phi$  satisfies~\eqref{eqn::CR_ODE} for all $\kappa\in (0,8)$ and $\alpha<1-\kappa/8$. Note that \begin{equation}\label{eqn::CR_mart}
\ee^{\alpha t}\Phi\left(\left(\sin(\theta_t/4)\right)^2\right)=\mathbb{E}_{\theta}\left[\CR(\m D\setminus \gamma)^{-\alpha}\cond \gamma_{[0,t]}\right]
\end{equation}
is a martingale and $\theta_t$ satisfies~\eqref{eqn::SLE_radial_SDE}. Thus, $\Phi$ is a weak solution for~\eqref{eqn::CR_ODE} and 
\[\left(\sin((\theta_2-\theta_1)/2)\right)^{-2h}\Phi\left(\left(\sin((\theta_2-\theta_1)/4)\right)^2\right)\]
is a weak solution to the radial BPZ equations. As the operators in the radial BPZ equations are hypoelliptic, see Remark~\ref{rem::hypoelliptic}, weak solutions are strong solutions. Thus $\Phi$ is a $C^2$ solution to~\eqref{eqn::CR_ODE} for all $\kappa\in (0,8)$ and $\alpha<1-\kappa/8$.
 The symmetry in~\eqref{eqn::CR_sym} is clear from the definition. 

Finally, let us consider the case when $\alpha\ge 1-\kappa/8$. 
Fix $\kappa\in (0,8)$ with $C\not\in\mathbb{Z}$. 
As $\alpha\uparrow (1-\kappa/8)$, we have
\begin{align*}
   &A\to 1, \quad B\to (1-8/\kappa), \\
   &f_1(u)\to \hF(1, 1-8/\kappa, 3/2-4/\kappa; u) \in (-\infty, \infty),\quad && f_1(1)\to -1,\\
   &f_2(u)\to u^{1-C}\hF(1+4/\kappa, 1/2-4/\kappa, 1/2+4/\kappa; u)\neq 0,\quad&& f_2(1)\to 0.
\end{align*}
Plugging into~\eqref{eqn::CR_twocombination}, we see that 
\[\Phi(\kappa, \alpha; u)\uparrow \infty, \quad\text{as }\alpha\uparrow (1-\kappa/8).\]
Note that $\Phi(\kappa, \alpha; u)$ is increasing in $\alpha$. 
This completes the proof. 
\end{proof}

\begin{cor}
Fix $\kappa\in (0,8)$ and $\theta_1<\theta_2<\theta_1+2\pi$. We denote $\theta=\theta_2-\theta_1\in (0,2\pi)$. Suppose $\gamma$ is chordal $\SLE_{\kappa}$ in $(\m D; \ee^{\ii \theta_1}, \ee^{\ii\theta_2})$ and denote by $\E_{\theta}$ the expectation with respect to $\gamma$. 
    Recall that $\mc Z_{\alpha}$ is defined in~\eqref{eqn::CR_pf} for $\alpha<1-\kappa/8$ and $\LG_{\mu}$ is defined in~\eqref{eqn::2SLEspiral_pf} for $\mu\in \m R$. Recall that $h=\frac{6-\kappa}{2\kappa}$ from~\eqref{eqn::parameters}. 
    Then we have
    \begin{equation}\label{eqn::Zalpha_cr}
        \mc Z_{\alpha}(\theta_1, \theta_2)=\left(\sin(\theta/2)\right)^{-2h}\frac{\E_{\theta}[\CR(\m D\setminus\gamma)^{-\alpha}]}{\E_{\pi}[\CR(\m D\setminus\gamma)^{-\alpha}]}, \quad\text{for }\alpha<1-\kappa/8.
    \end{equation}
    Moreover, we have
    \begin{equation}\label{eqn::CR_limit}
\frac{\E_{\theta}[\CR(\m D\setminus\gamma)^{-\alpha}]}{\E_{\pi}[\CR(\m D\setminus\gamma)^{-\alpha}]}=\left(\sin(\theta/2)\right)^{2h}\mc Z_{\alpha}(\theta_1, \theta_2)\to \left(\sin(\theta/2)\right)^{2h}\LG_0(\theta_1, \theta_2)
    \end{equation}
    as $\alpha\uparrow (1-\kappa/8)$.
\end{cor}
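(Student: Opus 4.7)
The strategy is to identify the probabilistic expectation $\Phi(u) := \E_\theta[\CR(\m D\setminus\gamma)^{-\alpha}]$ (with $u = \sin^2(\theta/4)$) with the deterministic function $\phi_\alpha$ from \eqref{eqn::Euler_initial}, up to a multiplicative constant, and then analyze the behaviour of $\phi_\alpha$ as $\alpha \uparrow 1 - \kappa/8$ at the ODE level.

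First I would prove \eqref{eqn::Zalpha_cr}. By Lemma~\ref{lem::CR_expectation}, $\Phi$ is a $C^2$ solution on $(0,1)$ of \eqref{eqn::CR_ODE}, which is the same ODE as the one in \eqref{eqn::Euler_initial}; moreover the symmetry $\Phi(u) = \Phi(1-u)$ from \eqref{eqn::CR_sym} forces $\Phi'(1/2) = 0$. Hence the normalization $u \mapsto \Phi(u)/\Phi(1/2)$ satisfies the initial conditions $\phi(1/2) = 1$ and $\phi'(1/2) = 0$. Uniqueness (Lemma~\ref{lem::Euler_initial}) identifies this normalized function with $\phi_\alpha$. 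Since $\theta = \pi$ corresponds to $u = 1/2$, one has $\Phi(1/2) = \E_\pi[\CR(\m D\setminus\gamma)^{-\alpha}]$, so $\phi_\alpha(u) = \Phi(u)/\Phi(1/2)$. Substituting into the definition \eqref{eqn::CR_pf} of $\mc Z_\alpha$ gives \eqref{eqn::Zalpha_cr} directly.

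For the limit \eqref{eqn::CR_limit}, by \eqref{eqn::Zalpha_cr} it is enough to show $\mc Z_\alpha \to \LG_0$ pointwise as $\alpha \uparrow 1 - \kappa/8$, and using $4u(1-u) = \sin^2(\theta/2)$, this amounts to showing $\phi_\alpha(u) \to \phi_\star(u) := (4u(1-u))^{4/\kappa - 1/2}$. The proof of Theorem~\ref{thm:classify_int} already records that $\phi_\star$ solves the limiting ODE (that is, the ODE in \eqref{eqn::Euler_initial} with $\alpha = 1 - \kappa/8$), and a direct check gives $\phi_\star(1/2) = 1$ and $\phi_\star'(1/2) = 0$ (the derivative carries a factor $(1-2u)$). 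Standard continuous dependence of solutions of linear ODEs on their coefficients — applied on any compact subinterval of $(0,1)$ containing $1/2$, on which the principal symbol $u(1-u)$ is uniformly nondegenerate — then yields $\phi_\alpha \to \phi_\star$ uniformly on compact subsets as $\alpha \uparrow 1 - \kappa/8$. A direct computation gives $(\sin(\theta/2))^{-2h}\phi_\star(u) = (\sin(\theta/2))^{(\kappa-6)/\kappa + 8/\kappa - 1} = (\sin(\theta/2))^{2/\kappa} = \LG_0(\theta_1, \theta_2)$, which closes the argument.

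The only (mild) subtlety is that the probabilistic expectation $\Phi$ itself blows up at the critical exponent $\alpha = 1 - \kappa/8$ (shown in the proof of Lemma~\ref{lem::CR_expectation}), so one cannot pass to the limit on the probabilistic side by monotone or dominated convergence. The convergence has to be extracted from the deterministic ODE, using that the normalized quantities $\phi_\alpha = \Phi_\alpha/\Phi_\alpha(1/2)$ remain of order one. Once the uniqueness of the limiting initial value problem is noted, the continuous dependence argument is purely classical and presents no real difficulty.
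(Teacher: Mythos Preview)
Your proof is correct and follows essentially the same route as the paper's: identify $\Phi/\Phi(1/2)$ with $\phi_\alpha$ via the ODE~\eqref{eqn::CR_ODE}, the symmetry~\eqref{eqn::CR_sym}, and the uniqueness in Lemma~\ref{lem::Euler_initial}, then pass to the limit using continuity of $\phi_\alpha$ in $\alpha$. The only cosmetic difference is that the paper cites the continuity-in-$\alpha$ clause of Lemma~\ref{lem::Euler_initial} directly, whereas you spell out the underlying continuous-dependence argument for linear ODEs; these are the same mechanism.
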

\begin{proof}
We denote $u=(\sin(\theta/4))^2$. 
    Recall from~\eqref{eqn::CR_pf} that $\phi_{\alpha}$ is the unique solution to~\eqref{eqn::Euler_initial}. Comparing with~\eqref{eqn::CR_ODE} and~\eqref{eqn::CR_sym}, it is clear that 
\[\phi_{\alpha}(\cdot)=\frac{\Phi(\kappa, \alpha; \cdot)}{\Phi(\kappa,\alpha; 1/2)}.\]
Thus 
\[\mc Z_{\alpha}(\theta_1, \theta_2):=\left(\sin(\theta/2)\right)^{-2h}\phi_{\alpha}(u)=\left(\sin(\theta/2)\right)^{-2h}\frac{\E_{\theta}[\CR(\m D\setminus\gamma)^{-\alpha}]}{\E_{\pi}[\CR(\m D\setminus\gamma)^{-\alpha}]},\]
as desired in~\eqref{eqn::Zalpha_cr}. 
Moreover, we have
\begin{align*}
\frac{\E_{\theta}[\CR(\m D\setminus\gamma)^{-\alpha}]}{\E_{\pi}[\CR(\m D\setminus\gamma)^{-\alpha}]} & =\left(\sin(\theta/2)\right)^{2h}\mc Z_{\alpha}(\theta_1, \theta_2)=\phi_{\alpha}(u) \end{align*}
which converges to $\phi_{\alpha_0}(u) = \left(\sin(\theta/2)\right)^{2h}\LG_0(\theta_1, \theta_2)$ as $\a \to \a_0 = 1 -\k/8$ by Lemma~\ref{lem::Euler_initial}. 
This gives~\eqref{eqn::CR_limit}. 
\end{proof}

\begin{cor}\label{cor:CR_driving}
Fix $\kappa\in (0,8)$ and $\alpha<1-\kappa/8$.
Denote by $\PP$ the law of $\gamma$ chordal $\SLE_{\kappa}$ in $(\m D; \ee^{\ii\theta_1}, \ee^{\ii\theta_2})$ with $\theta_1<\theta_2<\theta_1+2\pi$.
Let $\eta^{(1)}$ be $\gamma$ and let $\eta^{(2)}$ be the time-reversal of $\gamma$ and still denote by $\PP$ the induced law on $(\eta^{(1)}, \eta^{(2)})$. 
We define $\LZ_{\alpha}$ as in~\eqref{eqn::CR_pf}. 
 Denote by $\PP(\LZ_\alpha)$ the probability measure obtained by weighting $\PP$ by $\CR(\m D\setminus\gamma)^{-\alpha}$. Then, under $\PP(\LZ_\alpha)$, the family of local laws obtained by restricting the pair $(\eta^{(1)}, \eta^{(2)})$ in disjoint neighborhoods satisfies~\textbf{(CI)}, \textbf{(DMP)}, \textbf{(MARG)} and~\textbf{(INT)} with 
 \begin{equation*}
     b_j=\kappa\partial_j \log \LZ_{\alpha}, \quad j=1,2.
 \end{equation*}
More precisely, the driving function of $\eta^{(1)}$ solves the following SDE, up to the first time $\ee^{\ii\theta_2}$ is disconnected from the origin: 
\begin{equation}\label{eqn::SLE_CR_SDE1}
    \begin{cases}
    \xi_0^{(1)}=\theta_1, V_0^{(2)}=\theta_2,\\
    \ud \xi_t^{(1)}=\sqrt{\kappa}\ud \tilde{B}_t^{(1)}+\kappa\partial_1(\log \LZ_{\alpha})(\xi_t^{(1)}, V_t^{(2)})\ud t,\\
    \ud V_t^{(2)}=\cot\left((V_t^{(2)}-\xi_t^{(1)})/2\right)\ud t,
    \end{cases}
\end{equation}
where $\tilde{B}^{(1)}$ is Brownian motion under $\PP(\LZ_\alpha)$. Similarly, the driving function of $\eta^{(2)}$ solves the following SDE, up to the first time $\ee^{\ii\theta_1}$ is disconnected from the origin: 
\begin{equation}\label{eqn::SLE_CR_SDE2}
    \begin{cases}
    V_0^{(1)}=\theta_1, \xi_0^{(2)}=\theta_2,\\
    \ud \xi_t^{(2)}=\sqrt{\kappa}\ud \tilde{B}_t^{(2)}+\kappa\partial_2(\log \LZ_{\alpha})(V_t^{(1)}, \xi_t^{(2)})\ud t,\\
    \ud V_t^{(1)}=\cot\left((V_t^{(1)}-\xi_t^{(2)})/2\right)\ud t,
    \end{cases}
\end{equation}
where $\tilde{B}^{(2)}$ is Brownian motion under $\PP(\LZ_\alpha)$. 
\end{cor}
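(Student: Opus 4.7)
The strategy is to compute the Radon--Nikodym martingale
$M_{t_1} := \E_{\PP}\bigl[\CR(\m D \setminus \gamma)^{-\alpha}\mid \gamma_{[0,t_1]}\bigr]$
in closed form, apply Girsanov's theorem to obtain the driving SDE of $\eta^{(1)} = \gamma$, and then invoke reversibility of chordal SLE to transfer the result to $\eta^{(2)}$. Denote by $g_{t_1}$ the capacity-parametrized radial Loewner chain of $\gamma$ and by $(\xi_{t_1}, V_{t_1})$ its driving function and image of $\ee^{\ii\theta_2}$, as in~\eqref{eqn::chordalSLE_radial}. The conformal covariance $\CR(\m D \setminus \gamma;0) = \ee^{-t_1}\CR(\m D \setminus g_{t_1}(\gamma_{[t_1,\infty)});0)$, combined with the chordal domain Markov property of $\gamma$ under $\PP$, gives exactly as in~\eqref{eqn::CR_mart},
\[
M_{t_1} = \ee^{\alpha t_1}\,\Phi\bigl(\kappa,\alpha;\bigl(\sin((V_{t_1}-\xi_{t_1})/4)\bigr)^2\bigr) = C\,\ee^{\alpha t_1}\bigl(\sin((V_{t_1}-\xi_{t_1})/2)\bigr)^{2h}\mc Z_\alpha(\xi_{t_1},V_{t_1}),
\]
where the second equality uses~\eqref{eqn::Zalpha_cr} and $C = \Phi(\kappa,\alpha;1/2) > 0$ is a constant.

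Since $M_{t_1}$ is a positive $\PP$-martingale and the only Brownian integrator is $\sqrt\kappa\,dB^{(1)}_{t_1}$ in~\eqref{eqn::chordalSLE_radial}, It\^o's formula yields $dM_{t_1}/M_{t_1} = \sigma_{t_1}\,dB^{(1)}_{t_1}$ with
\[
\sigma_{t_1}/\sqrt\kappa \;=\; \partial_1 \log \mc Z_\alpha(\xi_{t_1},V_{t_1}) - h\cot\bigl((V_{t_1}-\xi_{t_1})/2\bigr).
\]
By Girsanov, under $\PP(\mc Z_\alpha)$ one has $dB^{(1)}_{t_1} = d\tilde B^{(1)}_{t_1} + (\sigma_{t_1}/\sqrt\kappa)\,dt$. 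Substituting into~\eqref{eqn::chordalSLE_radial} and using the key identity $\kappa h = (6-\kappa)/2$, the chordal drift $\tfrac{\kappa-6}{2}\cot((\xi-V)/2)$ cancels exactly with $-\kappa h\cot((V-\xi)/2) = \tfrac{6-\kappa}{2}\cot((\xi-V)/2)$, leaving precisely~\eqref{eqn::SLE_CR_SDE1}, valid up to the first disconnection time of $\ee^{\ii\theta_2}$ from the origin.

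For $\eta^{(2)}$ I invoke the reversibility of chordal $\SLE_\kappa$ for $\kappa\in(0,8)$: the time-reversal of $\gamma$ has the law of a chordal $\SLE_\kappa$ from $\ee^{\ii\theta_2}$ to $\ee^{\ii\theta_1}$. As the weight $\CR(\m D \setminus \gamma)^{-\alpha}$ depends only on the unoriented trace, the above Girsanov computation applies verbatim with the roles of $\theta_1,\theta_2$ (and hence of $\partial_1,\partial_2$) swapped; the symmetry $\mc Z_\alpha(\theta_2,\theta_1+2\pi) = \mc Z_\alpha(\theta_1,\theta_2)$ from~\eqref{eqn::CR_sym} ensures the new drift is $\kappa\partial_2\log\mc Z_\alpha$, giving~\eqref{eqn::SLE_CR_SDE2}. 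The axioms follow: \textbf{(MARG)} is immediate with $b_j = \kappa\partial_j \log \mc Z_\alpha$; \textbf{(CI)} holds because chordal SLE is conformally invariant and the conformal covariance $\CR(\varphi(\cdot);0) = |\varphi'(0)|\CR(\cdot;0)$ contributes a deterministic factor $|\varphi'(0)|^{-\alpha}$ absorbed in the normalization; \textbf{(INT)} holds because the swap $\tau$ corresponds to reversing the orientation of $\gamma$, preserving both the chordal SLE law and the weight.

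The main obstacle is verifying the two-parameter \textbf{(DMP)}: the chordal SLE's native Markov property is one-parameter, whereas here we let $\eta^{(1)}$ and $\eta^{(2)}$ grow independently from both ends. The resolution is to apply the first-paragraph computation iteratively: conditioning on $\eta^{(1)}_{[0,t_1]}$ produces, after pushforward by $g^{(1)}_{t_1}$, a chordal SLE in $g^{(1)}_{t_1}(\m D\setminus \eta^{(1)}_{[0,t_1]}) = \m D$ weighted by $\CR^{-\alpha}$ up to a deterministic factor; reversibility in this image domain lets us then condition on $\eta^{(2)}_{[0,t_2]}$ grown from the other end. Consistency with the reversed order is guaranteed because the diffusion generators associated with~\eqref{eqn::SLE_CR_SDE1}--\eqref{eqn::SLE_CR_SDE2} automatically satisfy the commutation relation~\eqref{eq:comm_cond}, as $\mc Z_\alpha$ solves the radial BPZ equations by construction.
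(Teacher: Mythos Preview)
Your approach is correct and essentially the same as the paper's: express the conditional expectation $\E_{\PP}[\CR^{-\alpha}\mid\gamma_{[0,t]}]$ as the martingale $\ee^{\alpha t}\Phi(\sin^2(\theta_t/4))$, apply Girsanov, and invoke reversibility for the second curve; the paper carries out the Girsanov step in terms of $\Phi'/\Phi$ and only afterwards converts to $\partial_1\log\mc Z_\alpha$ via~\eqref{eqn::Zalpha_cr}, whereas you rewrite $M_{t_1}$ in terms of $\mc Z_\alpha$ first, but the content is identical. One slip to fix: the Girsanov shift should read $dB^{(1)}_{t_1}=d\tilde B^{(1)}_{t_1}+\sigma_{t_1}\,dt$, not $(\sigma_{t_1}/\sqrt\kappa)\,dt$; your subsequent cancellation with $-\kappa h\cot((V-\xi)/2)$ tacitly uses the correct shift (otherwise the coefficient would be $\sqrt\kappa\,h$, not $\kappa h$), so the final SDE is right but the intermediate line is mis-stated.
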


\begin{proof}
The fact that the local laws obtained by restricting the pair $(\eta^{(1)}, \eta^{(2)})\sim\PP(\LZ_{\alpha})$ in disjoint neighborhoods satisfies~\textbf{(CI)}, \textbf{(DMP)}, \textbf{(MARG)} and~\textbf{(INT)} follows from the reversibility of SLE (proved in~\cite{Zhan, IG2, IG3}): suppose $\gamma$ is chordal $\SLE_{\kappa}$ in $\m D$ from $\ee^{\ii\theta_1}$ to $\ee^{\ii\theta_2}$ with $\kappa\in (0,8)$, the time-reversal of $\gamma$ has the same law as chordal $\SLE_{\kappa}$ in $\m D$ from $\ee^{\ii\theta_2}$ to $\ee^{\ii\theta_1}$. 
It remains to check~\eqref{eqn::SLE_CR_SDE1} and~\eqref{eqn::SLE_CR_SDE2}. As the pair $(\eta^{(1)}, \eta^{(2)})$ is interchangeable, it suffices to check~\eqref{eqn::SLE_CR_SDE1}. 

Denote $\Phi(\cdot)=\Phi(\kappa, \alpha; \cdot)$ as in~\eqref{eqn::CR_expectation}.
Using the same notations as in the proof of Lemma~\ref{lem::CR_expectation}, we denote the martingale in~\eqref{eqn::CR_mart} by
\begin{equation*}
    M_t(\alpha):=\ee^{\alpha t}\Phi\left(\left(\sin(\theta_t/4)\right)^2\right).
\end{equation*}
Then $\PP(\LZ_\alpha)$ is the same as $\PP$ tilting by $M_t(\alpha)$. Recall from~\eqref{eqn::SLE_radial_SDE}, under $\PP$, we have
 \[\ud \theta_t=\sqrt{\kappa}\ud B_t+\frac{\kappa-4}{2}\cot(\theta_t/2)\ud t.  \]
Thus, under $\PP$, we have
\[\frac{\ud M_t(\alpha)}{M_t(\alpha)}=\frac{\sqrt{\kappa}}{4}\frac
{\Phi'}{\Phi}\sin(\theta_t/2)\ud B_t.\]
Girsanov's theorem tells that 
\[\tilde{B}_t=B_t-\frac{\sqrt{\kappa}}{4}\frac{\Phi'}{\Phi}\sin(\theta_t/2)\ud t\]
is Brownian motion under $\PP(\LZ_\alpha)$. 
Combining with~\eqref{eqn::Zalpha_cr}, we obtain~\eqref{eqn::SLE_CR_SDE1}. 
\end{proof}

Now, we are ready to complete the proof of Theorem~\ref{thm:main}. 
\begin{proof}[Proof of Theorem~\ref{thm:main}]
    The conclusion follows from Theorem~\ref{thm:classify_int}, Corollary~\ref{cor::2SLEspiral_marginal} and Corollary~\ref{cor:CR_driving}. 
\end{proof}

\subsection{Commuting SLEs without interchangeability}

We may also classify the commuting SLEs without interchangeability. 
Let us first give an example.
\begin{remark}\label{rem::cr_left}
Using the same notations as in Lemma~\ref{lem::CR_expectation}, 
for $\kappa\in (0,8)$ and $\alpha<1-\kappa/8$, 
we consider
\begin{align*}
    \Phi^L(\kappa, \alpha; u):=&\mathbb{E}_{\theta}\left[\CR(\m D\setminus\gamma)^{-\alpha}1_{\{0\text{ is to the left of }\gamma\}}\right];\\
    \Phi^R(\kappa, \alpha; u):=&\mathbb{E}_{\theta}\left[\CR(\m D\setminus\gamma)^{-\alpha}1_{\{0\text{ is to the right of }\gamma\}}\right].
\end{align*}
Then $\Phi^L(\cdot)=\Phi^L(\kappa, \alpha; \cdot)$ and $\Phi^R(\cdot)=\Phi^R(\kappa, \alpha; \cdot)$ also satisfy the ODE~\eqref{eqn::CR_ODE}, but they do not enjoy the symmetry~\eqref{eqn::CR_sym} anymore. This gives an example of locally commuting 2-radial SLE without interchangeability.

Moreover, we denote $\theta=\theta_2-\theta_1$ and set 
\begin{align*}
    \LZ^L(\theta_1, \theta_2)=\left(\sin(\theta/2)\right)^{-2h}\Phi^L(\kappa, \alpha; \left(\sin(\theta/4)\right)^2);\\
    \LZ^R(\theta_1, \theta_2)=\left(\sin(\theta/2)\right)^{-2h}\Phi^R(\kappa, \alpha; \left(\sin(\theta/4)\right)^2).
\end{align*}
Then both $\LZ^L$ and $\LZ^R$ satisfy~\eqref{eqn::BPZ1} and~\eqref{eqn::BPZ2} with 
\[F=\frac{(6-\kappa)(\kappa-2)}{8\kappa}-\alpha.\]
\end{remark}

\begin{prop}\label{prop:remove_int}
    If one removes the interchangeability condition in Theorem~\ref{thm:classify_int}, one obtains partition functions, $\mc Z$, of the form:
    \begin{enumerate}
        \item $\mc Z=\mc G_\mu$ for some $\mu\in\m R$, where $\mc G_\mu$ is defined as in \eqref{eqn::2SLEspiral_pf}.
        \item $\mc Z=\mc Z_{\a,\b}$, for $\a<1-\k/8$ and $\b\in[0,1]$, where
        \begin{equation*}
            \mc Z_{\a,\b}(\t_1,\t_2)=(\sin(\t_{21}/2))^{-2h}(\b\Phi^L((\sin(\t_{21}/4))^2)+(1-\b)\Phi^R((\sin(\t_{21}/4))^2)).
        \end{equation*}
        \end{enumerate}
        \end{prop}
      
    The locally commuting 2-radial $\SLE_\kappa$ corresponding to the second case above is obtained analogously to Corollary~\ref{cor:CR_driving}. That is, one weights the law of a chordal $\SLE_\kappa$ in $(\m D;\ee^{\ii\t_1},\ee^{\ii\t_2})$, denoted by $\g$, by 
    \begin{equation*}
        \CR(\m D\setminus\g)^{-\a}(\b 1_{\{0 \text{ is to the left of }\g\}}+(1-\b)1_{\{0 \text{ is to the right of }\g\}}),
    \end{equation*}
    then lets $\eta^{(1)}$ be $\g$ and $\eta^{(2)}$ be the time-reversal of $\g$, and finally restricts the law of $(\eta^{(1)},\eta^{(2)})$ to disjoint neighborhoods. One sees that the obtained family of local laws satisfies~\textbf{(CI)}, \textbf{(DMP)}, and \textbf{(MARG)} by using the reversibility of SLE: this makes $\eta^{(2)}$ a chordal SLE in $(\m D;\ee^{\ii\t_2},\ee^{\ii\t_1})$ weighted by
    \begin{equation*}
        \CR(\m D\setminus\eta^{(2)})^{-\a}(\b 1_{\{0 \text{ is to the right of }\eta^{(2)}\}}+(1-\b)1_{\{0 \text{ is to the left of }\eta^{(2)}\}}).
    \end{equation*}

      \begin{proof}[Proof of Proposition~\ref{prop:remove_int}]
        If we in the proof of Theorem \ref{thm:classify_int} assume that $\mu=0$ without assuming interchangeability, then $\mc Z$ is a positive solution of \eqref{eqn::rot_BPZ}. By changing variables by \eqref{eqn::thetau}, we find that $\mc Z$ corresponds to a positive solution $\phi$ of \eqref{eqn::CR_ODE}. Lemma \ref{lem::Euler_positive}, shows that (up to a multiplicative constant) 
        \begin{equation}
            \phi=\b\Phi^L+(1-\b)\Phi^R,\quad \b\in[0,1],
        \end{equation} if $\a<1-\k/8$, that there are no positive solutions if $\alpha>1-\k/8$, and that there is only one positive solution, corresponding to $\mc Z=\mc G_0$, if $\a=1-\k/8$.
       \end{proof}

\section{Semiclassical limits of commutation relation}\label{sec:semiclassical}

\subsection{Commutation relation when $\k = 0$}

We now consider the commutation relation when $\k = 0$.  In the multichordal SLE case, a similar semiclassical limit of partition functions was considered in \cite{peltola_wang} and \cite{alberts2020pole}.

It is not hard to see that the infinitesimal commutation relation (Proposition~\ref{prop:radial_comm}) also holds when $\k = 0$. However, we need to derive the BPZ equation and classify the partition functions differently, which we summarize in the following proposition.

\begin{prop}\label{prop:part_wo_int_0}
We consider an interchangeable and locally commuting $2$-radial $\SLE_0$. Let $b_1, b_2 : S^1 \times S^1 \setminus \D \to \m R$ be $C^2$ functions as in the condition  \textbf{(MARG)}. Then \eqref{eq:comm_cond} and~\eqref{eq:rot} imply that 
there exists $\mc U: \{(\t_1, \t_2) \in \m R^2 \,|\, \t_1 < \t_2 < \t_1 +2 \pi\} \to \m R$ and a constant $F$ such that 
\[b_j = \partial_j \mc U, \quad j = 1,2 \]
and
\begin{align}\label{eqn::int_0}
\begin{split}
      (\partial_2 \mc U )^2 + 2 \cot(\t_{12}/2) \partial_1 \mc U - \frac{3}{\left(\sin(\t_{12}/2)\right)^2} & = F,\\
   (\partial_1 \mc U )^2 + 2 \cot(\t_{21}/2) \partial_2 \mc U - \frac{3}{\left(\sin(\t_{21}/2)\right)^2} & = F,
\end{split}
\end{align}
where $\theta_{21}=\theta_2-\theta_1=-\theta_{12}$. 
The only solutions are 
\begin{equation}\label{eq:sol_1_zero}
\mc U(\t_1, \t_2) = \mc U_\mu (\t_1, \t_2) := 2 \log \sin (\t_{21}/2) + \mu (\t_1 + \t_2)
\end{equation}
for some $\mu \in \m R$ or
\begin{equation}\label{eq:sol_2_zero}
\mc U(\t_1, \t_2) =  - 6 \log \sin (\t_{21}/2)
\end{equation}
up to an additive constant.
\end{prop}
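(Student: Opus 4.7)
The plan is to parallel the derivations of Proposition~\ref{prop:part_wo_int} and Theorem~\ref{thm:classify_int} in the singular case $\kappa=0$, where the main complication is the loss of the closure $\partial_1 b_2 = \partial_2 b_1$: in the $\kappa>0$ case it came from matching the $\partial_{12}$ coefficient in the commutation relation, but at $\kappa=0$ that coefficient vanishes on both sides. First, I would check that Proposition~\ref{prop:radial_comm} still holds at $\kappa=0$, since its proof is insensitive to the positivity of $\kappa$. The generators become first-order vector fields $\mathcal{L}_1 = b_1\partial_1 + \cot(\theta_{21}/2)\partial_2$ and $\mathcal{L}_2 = b_2\partial_2 - \cot(\theta_{21}/2)\partial_1$, and matching the $\partial_1, \partial_2$ coefficients of $[\mathcal{L}_1,\mathcal{L}_2]$ against $(\mathcal{L}_2-\mathcal{L}_1)/\sin^2(\theta_{21}/2)$ yields two first-order PDEs in $(b_1, b_2)$. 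By \textbf{(CI)} each $b_j$ depends only on $x = \theta_{21}$, and by \textbf{(INT)} one has $b_2(x) = b_1(2\pi - x)$; the two PDEs then become equivalent under $x\mapsto 2\pi-x$, so only one ODE in $x$ survives.

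Writing $P = b_1 + b_2$ (even about $x=\pi$), $Q = b_1 - b_2$ (odd about $x=\pi$), changing variable to $c = \cot(x/2)$, and setting $R = Q - 2c$, the surviving system becomes
\[
(P^2 - R^2)_c = -32c, \qquad PR_c - RP_c = -4P;
\]
the first integrates to the conservation law $P^2 - R^2 + 16c^2 = K$, and evaluating at $c=0$ with $R(0)=0$ yields $K = P(0)^2 \geq 0$. For $K > 0$, a Taylor expansion of $P, R$ around $c=0$ inserted recursively into the conservation law and the second equation determines all coefficients: one finds $P \equiv \pm\sqrt{K} =: 2\mu$ and $R \equiv -4c$ (so $Q = -2c$), giving $b_1 = \mu - \cot(x/2)$, $b_2 = \mu + \cot(x/2)$, which is the gradient of $\mathcal{U}_\mu$. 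For $K = 0$, apart from the algebraic solutions $P\equiv 0$ with $Q\in\{-2c, 6c\}$ (yielding respectively $\mathcal{U}_0$ and $-6\log\sin(\theta_{21}/2)$), the recursion formally admits a one-parameter family that sums in closed form to
\[
P(c) = \frac{64\lambda c^2}{64 - \lambda^2 c^2}, \qquad R(c) = \frac{4c(64+\lambda^2 c^2)}{64 - \lambda^2 c^2};
\]
for $\lambda\neq 0$ these functions develop poles at $|c|=8/|\lambda|$, corresponding to interior points of $x\in(0,2\pi)$, and so are ruled out by the $C^2$-regularity of \textbf{(MARG)}. Hence in every admissible case $P$ is constant, and $b_j = \partial_j \mathcal{U}$ for $\mathcal{U} = \Phi(\theta_{21}) + \mu(\theta_1+\theta_2)$ with $\Phi' = Q/2$, producing exactly~\eqref{eq:sol_1_zero} and~\eqref{eq:sol_2_zero}.

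Finally, I would derive the equations~\eqref{eqn::int_0} by substituting $b_j = \partial_j \mathcal{U}$ into the commutation PDE: using $\partial_2 \mathcal{U}\,\partial_{12}\mathcal{U} = \tfrac12 \partial_1(\partial_2 \mathcal{U})^2$ and $\cot(\theta_{21}/2)\partial_{11}\mathcal{U} = \partial_1(\cot(\theta_{21}/2)\partial_1\mathcal{U}) - \partial_1\mathcal{U}/(2\sin^2(\theta_{21}/2))$, the first PDE becomes $\partial_1\bigl[\tfrac12(\partial_2\mathcal{U})^2 - \cot(\theta_{21}/2)\partial_1\mathcal{U} - 3/(2\sin^2(\theta_{21}/2))\bigr] = 0$, so the bracketed quantity depends only on $\theta_2$; rotation invariance then forces it to be a constant, giving the first equation of~\eqref{eqn::int_0} after rewriting $\cot(\theta_{21}/2) = -\cot(\theta_{12}/2)$, and the second equation follows analogously. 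The main obstacle is the step that $P$ is constant: at $\kappa=0$ this is not immediate from the commutation relation, and one must combine the conservation law with the $C^2$-regularity on $(0,2\pi)$ to rule out the spurious singular family.
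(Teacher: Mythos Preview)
Your approach is correct and in one respect more careful than the paper's. The paper asserts that the derivation up to~\eqref{eqn::comm_b2} carries over verbatim at $\kappa=0$, then invokes the closure relation~\eqref{eq:b_comm} to produce $\mc U$, and finally reduces~\eqref{eqn::int_0} (in the $\mu=0$ case) to the explicit quadratic $(\mc U'(\theta)+\cot(\theta/2))^2=C+(3+\cos^2(\theta/2))/\sin^2(\theta/2)$, which it solves using $\mc U'(\pi)=0$. But at $\kappa=0$ the $\partial_{12}$-coefficient in the commutator is identically zero on both sides, so~\eqref{eq:b_comm} is not delivered by~\eqref{eq:comm_cond}---you correctly flag this. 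Your route sidesteps the issue: you solve the first-order system for $(b_1,b_2)$ directly in the variables $(P,R)$, establish that every admissible $C^2$ solution has $P\equiv b_1+b_2$ constant (which \emph{is} closure, given rotation invariance), and only then introduce $\mc U$ and derive~\eqref{eqn::int_0}. The paper's route is shorter once closure is granted; yours actually justifies it.

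One point to tighten: for $K>0$, the Taylor recursion at $c=0$ determines the solution only locally (equivalently, among real-analytic functions). The matrix in your system for $(\dot P,\dot R)$ has determinant $P^2-R^2=K-16c^2$, which vanishes at $c=\pm\mu/2$, so you should argue separately that the $C^2$ continuation past these points is unique. A clean fix is the scalar variable $S=R+4c$: it satisfies $(16c^2-K)S_c=4S(S-4c)$ with $S(0)=0$, so $S\equiv 0$ on $(-\mu/2,\mu/2)$ by standard ODE uniqueness; at $|c|=\mu/2$ the linearization $S_c\approx -S/(c\mp\mu/2)$ admits only $S\equiv 0$ among continuous solutions vanishing at the endpoint, ruling out branching. (Your $K=0$ analysis via the explicit one-parameter family is complete, since the substitution $W=S/(4c)$ reduces the equation to $cW_c=W(W-2)$, which integrates in closed form and shows your spurious family exhausts the nontrivial solutions.)
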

\begin{remark}
    It follows immediately from the proof that, if one removes the interchangeability assumption from Proposition \ref{prop:part_wo_int_0}, then the only additional solutions $\mc U$ are of the from $$\mc U(\t_1,\t_2) = \mc U_{\lambda,+}(\t_1,\t_2) := -2\log\sin(\t_{21}/2)+\int_{\t_{21}}^\pi\sqrt{2\lambda+4\cot^2(u/2)}\ud u,$$
    $$\mc U(\t_1,\t_2) = \mc U_{\lambda,-}(\t_1,\t_2) := -2\log\sin(\t_{21}/2)-\int_{\t_{21}}^\pi\sqrt{2\lambda+4\cot^2(u/2)}\ud u,$$
    for $\lambda>0$, up to an additive constant.\label{rmk:wo_int}
\end{remark}
\begin{proof}
    The proof of Proposition~\ref{prop:part_wo_int} up to \eqref{eqn::comm_b2} holds verbatim when $\k = 0$. By conformal invariance $b_j$ is invariant under rotation $(\t_1,\t_2)\mapsto(\t_1+a,\t_2+a)$. Thus, $\hat b_j:S^1\setminus\{0\}\to\m R$ defined by $\hat b_j(\t)=b_j(\t,0)$, $j=1,2$, satisfies 
    $$\partial_1 b_j(\t_1,\t_2)=\partial_1 \hat b_j'(\t_{12})=-\partial_2 b_j(\t_1,\t_2).$$
    Equations~\eqref{eqn::comm_b1} and \eqref{eqn::comm_b2} give:
    \begin{align}
      &(\cot(\t/2)-\hat b_2(\t)) \hat b_1'(\t)-\frac{\hat b_1(\t)}{2\sin^2(\t/2)}+\frac{3\cot(\t/2)}{2\sin^2(\t/2)}= 0 \label{eqn::comm_hatb1}\\
     & (\cot(\t/2)+\hat b_1(\t))\hat b_2'(\t)-\frac{\hat b_2(\t)}{2\sin^2(\t/2)}-\frac{3\cot(\t/2)}{2\sin^2(\t/2)}= 0.\label{eqn::comm_hatb2}
    \end{align}
    Taking the difference of these equations yields
    $$\frac{\ud}{\ud \t}((\cot(\t/2)+\hat b_1(\t))(\cot(\t/2)-\hat b_2(\t))+\frac{4\cot(\t/2)}{\sin^2(\t/2)}=0,$$
    so that
    \begin{equation}(\cot(\t/2)+\hat b_1(\t))(\cot(\t/2)-\hat b_2(\t))=\frac{4}{\sin^2(\t/2)}+C,\label{eq:b1b2rel}\end{equation}
    for some $C\in\m R$. Thus, multiplying~\eqref{eqn::comm_hatb1} by $2\sin^2(\t/2)(\hat b_1(\t)+\cot(\t/2))$ yields
    \begin{align*}0=&(8+2C\sin^2(\t/2))\hat b_1'(\t)-(\hat b_1(\t)+\cot(\t/2))(\hat b_1(\t)-3\cot(\t/2)) \\
    =& (8+2C\sin^2(\t/2))f'(\t)-((f(\t))^2+C+4)\end{align*}
    where $f$ is defined by $f(\t)=\hat b_1(\t)-\cot(\t/2)$. Thus, $f$ solves the separable differential equation
    \begin{equation}
        (8+2C\sin^2(\t/2))f'(\t)=(f(\t))^2+C+4.\label{eqn::separable}
    \end{equation}
    Similarly, we see using~\eqref{eqn::comm_hatb2} that $g$ defined by $g(\t)=\hat b_2(2\pi-\t)-\cot(\t/2)$ also solves~\eqref{eqn::separable} and by~\eqref{eq:b1b2rel}
    \begin{equation}
        (2\cot(\t/2)+f(\t))(2\cot(\t/2)-g(2\pi-\t))=\frac{4}{\sin^2(\t/2)}+C.\label{eq:fgrel}
    \end{equation}
    The solutions of~\eqref{eqn::separable} defined on $(0,2\pi)$ are studied in Lemma~\ref{lemma:solseparable}.\par
    \noindent\textbf{Case 1:} Suppose $C+4>0$. Then both $f$ and $g$ are of the form~\eqref{eq:tansol} for some constants $D=D_f,D_g\in[-\pi/4,\pi/4].$ From~\eqref{eq:fgrel} it follows that 
    $$-(C+4)=f(\pi)g(\pi)=(C+4)\tan(D_f)\tan(D_g)$$
    and thus we must have $-D_g=D_f\in\{\pi/4,-\pi/4\}.$ Using the half-angle formula for the tangent function one can see that these two solutions can be rewritten as 
    $$\begin{cases}f(\t)=\sqrt{(C+4)+4\cot^2(\t/2)}-\cot(\t/2)\\
    g(\t)=-\sqrt{(C+4)+4\cot^2(\t/2)}-\cot(\t/2)
    \end{cases}$$
    $$\begin{cases}f(\t)=-\sqrt{(C+4)+4\cot^2(\t/2)}-\cot(\t/2)\\
    g(\t)=\sqrt{(C+4)+4\cot^2(\t/2)}-\cot(\t/2).
    \end{cases}$$
    \noindent\textbf{Case 2:} If $C+4=0$ we have only the solutions 
    $$\begin{cases}f(\t)=0\\
    g(\t)=0\end{cases}\qquad\text{and}\qquad \begin{cases}f(\t)=-4\cot(\t/2)\\
    g(\t)=-4\cot(\t/2).\end{cases}$$ 
    Indeed, if 
    $$f(\t)=\begin{cases}
            (D-\frac{1}{4}\tan(\t/2))^{-1}& \t\in(0,\pi)\\
            (E-\frac{1}{4}\tan(\t/2))^{-1}& \t\in(\pi,2\pi)\\
    \end{cases}
    $$
    for some $D\leq 0$ and $E\geq 0$ then solving~\eqref{eq:fgrel} for $g$ yields
    $$g(\t)=\begin{cases}
            (E-\frac{1}{4}\tan(\t/2))^{-1}& \t\in(0,\pi)\\
            (D-\frac{1}{4}\tan(\t/2))^{-1}& \t\in(\pi,2\pi).\\
    \end{cases}
    $$
    Therefore, we must have $D=E=0$ (otherwise $g$ is not well-defined on all of $(0,2\pi)$).
    \par
    \noindent{\textbf{Case 3:}} For $C+4<0$ we have the solutions $$\begin{cases}f(\t)=\sqrt{-C-4}\\
    g(\t)=\sqrt{-C-4}\end{cases}\quad\text{and}\quad \begin{cases}f(\t)=-\sqrt{-C-4}\\
    g(\t)=-\sqrt{-C-4}.\end{cases}$$ Here we have again used that $f(\pi)g(\pi)=-(C+4).$\par
    From the solutions $(f,g)$ above we obtain corresponding solutions $(\hat b_1,\hat b_2)$ of \eqref{eqn::comm_hatb1} and~\eqref{eqn::comm_hatb2} and $(b_1,b_2)$ of~\eqref{eqn::comm_b1} and ~\eqref{eqn::comm_b2} for $\k=0$. We note that these solutions satisfy $\partial_1 b_2 = \partial_2 b_1$ (or equivalently $\hat b_1' = -\hat b_2'$, or $f'(\t)=g'(2\pi-\t)$) so there exists a function $$\mc U : \{(\t_1, \t_2) \,|\,\t_1 < \t_2 < \t_1 + 2\pi\} \to \m R$$ such that $b_1 = \partial_1 \mc U$ and $b_2 = \partial_2 \mc U$. Up to an additive constant we have that 
    $$\mc U(\t_1,\t_2) = \mc U_{\lambda,+}(\t_1,\t_2) := -2\log\sin(\t_{21}/2)+\int_{\t_{21}}^\pi\sqrt{2\lambda+4\cot^2(u/2)}\ud u,$$
    $$\mc U(\t_1,\t_2) = \mc U_{\lambda,-}(\t_1,\t_2) := -2\log\sin(\t_{21}/2)-\int_{\t_{21}}^\pi\sqrt{2\lambda+4\cot^2(u/2)}\ud u,$$
    for $2\lambda=C+4>0$,
    $$\mc U(\t_1,\t_2)=-6\log\sin(\t_{21}/2),$$
    and
    $$\mc U(\t_1,\t_2) = \mc U_\mu(\t_1,\t_2) = 2\log\sin(\t_{21}/2)+\mu(\t_1+\t_2)$$
    for $\mu=\pm\sqrt{-C-4}\in\m R$. It is easily verified that each of these $\mc U$ satisfy~\eqref{eqn::int_0} for some constant $F$. We note that $\mc U_{\l,\pm}$ does not satisfy the interchangeability condition but that the others do.
\end{proof}

We have defined the two-sided radial $\SLE_\k$ with spiraling rate $\mu$ for $\k \in (0,8)$ in Definition~\ref{def::2SLEspiral}, we now extend the definition to the case $\k = 0$.

\begin{df}\label{def:mu_zero}
We use the same notations as in Figure~\ref{fig::gtcommutation}, we say that a deterministic pair of continuous simple curves $(\eta^{(1)}, \eta^{(2)})$ is the two-sided radial $\SLE_0$ with spiraling rate $\mu$ in  $(\m D; \ee^{\ii \t_1}, \ee^{\ii \t_2} ;0)$ if we have for all $\mbt = (t_1, t_2)$,
   \begin{align*}
   \begin{cases}
   \t_{(0,0)}^{(1)}=\theta_1,\quad \t_{(0,0)}^{(2)}=\theta_2,\\
            \ud \theta_{\mbt}^{(1)}=\mu\phi_{\mbt, 1}'\left(\xi_{t_1}^{(1)}\right)^2\ud t_1 +\cot\left((\theta_{\mbt}^{(1)}-\theta_{\mbt}^{(2)})/2\right)
       \left(\phi_{\mbt, 1}'\left(\xi_{t_1}^{(1)}\right)^2\ud t_1+\phi_{\mbt, 2}'\left(\xi_{t_2}^{(2)}\right)^2\ud t_2\right),\\
      \ud \theta_{\mbt}^{(2)} = \mu\phi_{\mbt, 2}'\left(\xi_{t_2}^{(2)}\right)^2\ud t_2 +\cot\left((\theta_{\mbt}^{(2)}-\theta_{\mbt}^{(1)})/2\right)
       \left(\phi_{\mbt, 1}'\left(\xi_{t_1}^{(1)}\right)^2\ud t_1+\phi_{\mbt, 2}'\left(\xi_{t_2}^{(2)}\right)^2\ud t_2\right).
   \end{cases}
   \end{align*}
\end{df}

To justify such a pair exists, we note that when $t_2 \equiv 0$, we write $\theta_{\mbt}^{(1)}=\xi_{t_1}^{(1)}$ and $\theta_{\mbt}^{(2)}=V_{t_1}^{(2)}$, then 
   \begin{align}\label{eq:zero_marginal}
   \begin{cases}
   \xi_0^{(1)}=\theta_1, \quad V_0^{(2)}=\theta_2,\\
   \ud   \xi_{t_1}^{(1)} = \mu \ud t_1+ \cot \left( (\xi_{t_1}^{(1)} - V_{t_1}^{(2)})/2\right)\ud t_1 =  \partial_1 \mc U_\mu (\xi_{t_1}^{(1)}, V_{t_1}^{(2)})\ud t_1 ,\\
       \ud  V_{t_1}^{(2)}  = \cot \left( (V_{t_1}^{(2)}- \xi_{t_1}^{(1)})/2\right)\ud t_1,
   \end{cases}
   \end{align}
   which shows $\eta^{(1)}$ is the radial SLE$_0^\mu(2)$ with force point $\ee^{\ii \t_2}$.  Similarly, $\eta^{(2)}$ is the radial SLE$_0^\mu(2)$ with force point $\ee^{\ii \t_1}$. As in Corollary~\ref{cor::2SLEspiral_marginal}, the system of equations in Definition~\ref{def:mu_zero} is simply the two-time version of \eqref{eq:zero_marginal} starting from $\mbt$ after capacity-reparametrization.
   
Since $\mc U_\mu$ satisfies the commutation relation as we showed in Proposition~\ref{prop:part_wo_int_0}, we know that $(\eta^{(1)}, \eta^{(2)})$ gives a pair in Definition~\ref{def:mu_zero} if we show that $\t_\mbt^{(2)} \notin \{\t_\mbt^{(1)}, \t_\mbt^{(1)} + 2\pi\}$ for all $\mbt$. Indeed, the map $t_2 \mapsto \t_{(0,t_2)}^{(2)}-\t_{(0,t_2)}^{(1)}$ is repulsive away from $\{0,2\pi\}$, so is well-defined and for all $t_2 \ge 0$. Similarly, for fixed $t_2$, the function $t_1 \mapsto \t_{(t_1, t_2)}^{(2)}-\t_{(t_1, t_2)}^{(1)}$ is repulsive away from $\{0,2\pi\}$, so $\t_\mbt^{(2)} \notin \{\t_\mbt^{(1)}, \t_\mbt^{(1)} + 2\pi\}$ for all $t_1,t_2 \ge 0$.

\begin{cor} \label{cor:0_commuting_classified}
The only interchangeable and locally commuting $2$-radial $\SLE_0$ are the two-sided radial $\SLE_0$ with spiraling rate $\mu \in \m R$ and the chordal $\SLE_0$. 
\end{cor}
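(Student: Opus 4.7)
The plan is to match each of the two solution families in Proposition~\ref{prop:part_wo_int_0} with a known deterministic pair of curves. By that proposition, any interchangeable locally commuting $2$-radial $\SLE_0$ has drifts $b_j = \partial_j \mc U$ where $\mc U$ is either $\mc U_\mu$ for some $\mu\in\m R$, or $-6 \log \sin(\theta_{21}/2)$. Since $\kappa = 0$ makes the marginal equations \eqref{eq:marg_1}--\eqref{eq:marg_2} genuine ODE systems, standard Cauchy--Lipschitz theory shows that the pair $(\eta^{(1)},\eta^{(2)})$ is uniquely determined by $(b_1,b_2)$ away from the singular locus $\theta_{21}\in\{0,2\pi\}$. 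Consequently, it is enough to exhibit one deterministic pair per case whose marginal drifts agree with the prescribed $(b_1,b_2)$.

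For the first case, I would differentiate directly: $\partial_1 \mc U_\mu = \mu - \cot(\theta_{21}/2)$ and $\partial_2 \mc U_\mu = \mu + \cot(\theta_{21}/2)$. Plugging these into \eqref{eq:marg_1} with $\kappa = 0$ reproduces exactly the system \eqref{eq:zero_marginal} defining radial $\SLE_0^\mu(2)$, which is the one-time marginal of the two-sided radial $\SLE_0$ with spiraling rate $\mu$ from Definition~\ref{def:mu_zero}. Combined with the domain Markov axiom \textbf{(DMP)}, this forces the full two-time system recorded in Definition~\ref{def:mu_zero}, whose global well-posedness and non-collision of the two curves were already verified in the paragraph following that definition. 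Hence the first case corresponds exactly to the two-sided radial $\SLE_0$ with spiraling rate $\mu$.

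For the second case, $\mc U = -6\log\sin(\theta_{21}/2)$ gives $b_1 = 3\cot(\theta_{21}/2)$ and $b_2 = -3\cot(\theta_{21}/2)$. Comparing with the chordal-to-radial SDE \eqref{eqn::chordalSLE_radial} specialized to $\kappa = 0$, the drift $b_1$ is exactly the marginal drift of chordal $\SLE_0$ in $(\m D;\ee^{\ii\theta_1},\ee^{\ii\theta_2})$ read through the radial Loewner chain via the change of coordinates of~\cite{SW05}. Taking $\eta^{(1)}$ to be this chordal $\SLE_0$, which is simply the hyperbolic geodesic between the two endpoints, and $\eta^{(2)}$ its time-reversal (still a chordal $\SLE_0$ by the reversibility invoked in Corollary~\ref{cor:CR_driving}), one obtains an interchangeable pair whose $\eta^{(2)}$-marginal has the required drift $b_2$ by symmetry. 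The main (minor) obstacle in both cases is to rule out that the deterministic flow reaches the singular diagonal; this is handled by the repulsive nature of $\cot(\theta_{21}/2)$ near $0$ and $2\pi$ for case 1, exactly as argued after Definition~\ref{def:mu_zero}, and by the classical smoothness of the hyperbolic geodesic for case 2.
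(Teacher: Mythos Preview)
Your proposal is correct and follows essentially the same approach as the paper: identify each solution $\mc U$ from Proposition~\ref{prop:part_wo_int_0} with the corresponding deterministic pair by matching the marginal drifts. The paper's proof is simply a two-line assertion of this correspondence, whereas you have filled in the explicit derivative computations and the comparison with \eqref{eq:zero_marginal} and \eqref{eqn::chordalSLE_radial}; the only minor quibble is that the reversibility you cite from Corollary~\ref{cor:CR_driving} is stated there for $\kappa\in(0,8)$, but for $\kappa=0$ the chordal $\SLE_0$ is the hyperbolic geodesic and reversibility is trivial.
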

\begin{proof}
When $\mc U = \mc U_\mu$, the corresponding commuting $\SLE_0$ is the two-sided radial $\SLE_0$ (with spiraling rate $\mu$). 

    When $\mc U (\t_1, \t_2) =  - 6 \log \sin (\t_{21}/2)$, the corresponding commuting $\SLE_0$ is the chordal $\SLE_0$ in $(\m D; \ee^{\ii \t_1}, \ee^{\ii \t_2})$ and its time-reversal.
\end{proof}

   \begin{remark}
         We note that when $\mc U (\t_1, \t_2) = \mc U_0(\t_1, \t_2) = 2 \log \sin (\t_{21}/2)$, the corresponding commuting $\SLE_0$ is the two-sided radial $\SLE_0$ (with zero spiraling rate). This is also the geodesic pair in $\mf X(\m D; \ee^{\ii \t_1}, \ee^{\ii \t_2}; 0)$ studied in \cite{W1,MRW1,mesikepp2022deterministic}, or in other words, the concatenation of $\eta^{(1)}$ and $\eta^{(2)}$ is the chord minimizing the chordal Loewner energy among all chords in $(\m D; \ee^{\ii \t_1}, \ee^{\ii \t_2})$ passing through $0$, hence can be viewed as the chordal $\SLE_0$ ``conditioned'' to pass through $0$.

         We remark that, unlike the $\k >0$ case, we do not have the second one-parameter family given by weighting by conformal radius as described in Section~\ref{subsec::chordalSLE_CR}. 
   \end{remark}


\subsection{Semiclassical limits of partition functions}
To understand the reason why the second one-parameter family given by $\mc Z_\a$ disappears when $\k =0$, we now take a closer look at the semiclassical limit ($\k \to 0$) of $\LG_\mu$ and $\mc Z_\a$.

For $\mu\in\m R$, recall that $\LG_{\mu}$ is defined in~\eqref{eqn::2SLEspiral_pf}: 
\begin{equation*}
    \LG_{\mu}(\theta_1, \theta_2)=\left(\sin\left((\theta_{2}-\theta_1)/2\right)\right)^{2/\k} \exp\left(\frac{\mu}{\kappa}(\theta_1+\theta_2)\right).
\end{equation*}
\begin{lem}\label{lem::semiclassical_Gmu}
Fix $\mu\in\m R$ and $0<\theta_1<\theta_2<\theta_1+2\pi$, we have~\eqref{eqn::classical_mu}: 
\begin{equation*}
    \lim_{\kappa\to 0}\kappa\log\LG_{\mu}(\theta_1, \theta_2)=2\log\sin\left((\theta_{2}-\theta_1)/2\right)+\mu(\theta_1+\theta_2)
\end{equation*}
which is the solution~\eqref{eq:sol_1_zero} in Proposition~\ref{prop:part_wo_int_0}.
\end{lem}
\begin{proof}
    This is immediate from the expression of $\LG_\mu$.
\end{proof}

Lemma~\ref{lem::semiclassical_Gmu} gives the first part of Proposition~\ref{prop::semiclassical_pf}. We will prove its second part below.
For $\alpha<1-\kappa/8$, recall that $\mc Z_{\alpha}$ is defined in~\eqref{eqn::CR_pf}: 
\[ \mc Z_{\alpha}(\theta_1, \theta_2)=\left(\sin\left((\theta_{2}-\theta_1)/2\right)\right)^{(\kappa-6)/\kappa}\phi_{\alpha}\left(\left(\sin\left((\theta_{2}-\theta_1)/4\right)\right)^2\right),\]
where $\phi_{\alpha}$ is the unique solution to~\eqref{eqn::Euler_initial}. 
 Before we derive semiclassical limit of $\mc Z_{\alpha}$, let us first address chordal Loewner energy~\cite{peltola_wang}. Recall that $\mf X (\m D; \ee^{\ii \t_1}, \ee^{\ii \t_2})$ is the space of all continuous curves in $\m D$ connecting $\ee^{\ii \t_1}$ and $\ee^{\ii \t_2}$. 
\begin{lem}\label{lem::chordalLoewnerenergy}
Fix $\t_1<\t_2<\t_1+2\pi$. 
    Suppose $\gamma\in \mf X (\m D; \ee^{\ii \t_1}, \ee^{\ii \t_2})$. We parameterize it by the capacity and define $g_t, \xi_t$ accordingly as in Section~\ref{subsec::radialLoewner} and set $T=-\log\CR(\m D\setminus\gamma)$. Suppose $t\mapsto \xi_t$ is absolutely continuous and denote its derivative by $\dot\xi_t$. 
    We denote by $t\mapsto V_t$ the solution to 
    \begin{equation}\label{eqn::defV}
      \dot V_t=\cot\left((V_t-\xi_t)/2\right),\quad V_0=\theta_2.  
    \end{equation}
    Then the chordal Loewner energy $I(\gamma)$ can be written as 
    \begin{equation}\label{eqn::Loewnerenergy_def}
        I(\gamma)=\frac{1}{2}\int_0^T \left(\dot\xi_s-3\cot\left((V_s-\xi_s)/2\right)\right)^2\ud s. 
    \end{equation}
    Furthermore, the infimum of $I(\gamma)$ in $\mf X (\m D; \ee^{\ii \t_1}, \ee^{\ii \t_2})$ is zero and is attained by the hyperbolic geodesic. 
\end{lem}
\begin{proof}
    Eq.~\eqref{eqn::Loewnerenergy_def} is a standard calculation by changing coordinates. 
\end{proof}

\begin{lem}\label{lem::semiclassical_Zalpha}
    If we choose $\a = \a (\k)$ such that $\a = o (1/\k)$, then 
    \begin{equation} \label{eq:lim_sub}
         \lim_{\k \to 0} \k \log \mc Z_\a (\theta_1, \theta_2)= -6 \log \sin (\t_{21}/2)
    \end{equation}
    which is the solution~\eqref{eq:sol_2_zero} in Proposition~\ref{prop:part_wo_int_0}.
    
      If $\a \sim  -\l /\k$ for some $\l > 0$, then the limit $\lim_{\k \to 0} \k \log \mc Z_\a (\t_1, \t_2)$ exists and equals
    \begin{align}\label{eq:lim_U_lambda}
        \begin{split}
             \mc U^\l (\t_1, \t_2) & : =
              -6 \log \sin (\t_{21}/2) - \inf_{\g \in \mf X (\m D; \ee^{\ii \t_1}, \ee^{\ii \t_2})} \left(I(\g) - \l \log \CR(\m D \setminus \g)\right)\\
             & \quad +\inf_{\g \in \mf X (\m D; -1,1)} \left(I(\g) - \l \log \CR(\m D \setminus \g)\right),
        \end{split}
    \end{align}
    where $I(\g)$ is the chordal Loewner energy of $\g$ in $\mf X(\m D; \ee^{\ii \t_1}, \ee^{\ii \t_2})$ and the infimums are attained.
\end{lem} 
We note that the constraint $\a < 1 - \k /8$ implies that we can only choose $\l \ge 0$ and the last term in \eqref{eq:lim_U_lambda} is a constant such that $\mc U^\l(\t_1,\t_1+\pi) = 0$ as we have normalized $\mc Z_\a$ such that $\mc Z_\a (\t_1, \t_1 + \pi) = 1$.
\begin{proof}
For $\alpha<1-\kappa/8$, recall from Lemma~\ref{lem::CR_expectation} and~\eqref{eqn::Zalpha_cr} that $\mc Z_{\alpha}$ is defined as
    \begin{equation*}
    \mc Z_{\alpha}(\theta_1, \theta_2)=\left(\sin(\theta/2)\right)^{(\kappa-6)/\kappa} \frac{\m E_{\theta} [\CR (\m D \setminus \g)^{-\a}]}{\m E_{\pi}[\CR(\m D\setminus\gamma)^{-\alpha}]},
\end{equation*}
where $\m E_{\theta}$ is the expectation with respect to the law of $\g$ which is  a chordal $\SLE_\k$ in $(\m D; \ee^{\ii \t_1}, \ee^{\ii \t_2})$ and $\theta=\theta_2-\theta_1$.

Then the result follows from the large deviation principle for chordal SLE as $\k \to 0\splus$ \cite{peltola_wang}. In fact, the Loewner energy is the large deviation rate function of chordal $\SLE_{0\splus}$ for the Hausdorff metric and $\g \mapsto -\log \CR(\m D \setminus \g)$ is a continuous function $\mf X (\m D; \ee^{\ii \t_1}, \ee^{\ii \t_2}) \to [0,\infty]$. Varadhan's lemma \cite[Lem.~4.3.4 and~4.3.6]{DZ10} shows if $\a \sim  -\l /\k$, then
    $$\lim_{\k \to 0} \k \log  \m E_{\theta} [\CR (\m D \setminus \g)^{-\a}] = - \inf_{\g \in \mf X (\m D; \ee^{\ii \t_1}, \ee^{\ii \t_2})} \left(I(\g) - \l \log \CR(\m D \setminus \g)\right)$$
    which proves the limit \eqref{eq:lim_U_lambda}.     Since the large deviation rate function $I$ of chordal SLE$_{0\splus}$ is good, the infimum in \eqref{eq:lim_U_lambda} is attained.
    
    Similarly, an easy bound and Varadhan's lemma also show the limit \eqref{eq:lim_sub}.
\end{proof}

\begin{prop}\label{prop:confRadMinimizer}
For $\theta_1<\theta_2\le \theta_1+\pi$, we denote $\theta=\theta_2-\theta_1$ and we have
    \begin{equation}\label{eqn::energy_inf}
    \inf_{\g \in \mf X (\m D; \ee^{\ii \t_1}, \ee^{\ii \t_2})} \left(I(\g) - \l \log \CR(\m D \setminus \g)\right) = \int_0^{\t}\Big(\sqrt{2\lambda+4\cot^2(u/2)}-2\cot(u/2)\Big)\, \dd u.
    \end{equation}
    \begin{itemize}
        \item If $\theta_1<\theta_2<\theta_1+\pi$, the infimum in~\eqref{eqn::energy_inf} is attained for a unique curve $\g^*\in \mf X (\m D; \ee^{\ii \t_1}, \ee^{\ii \t_2})$ whose radial driving function $\xi^*_\cdot$, defined on $[0,T]$, satisfies 
    \begin{equation}\begin{cases}
    \xi_0^*=\theta_1, V_0^*=\theta_2,\\
    \dot \xi^*_t =  \cot((V^*_t-\xi^*_t)/2)+\sqrt{2\l+4\cot^2((V^*_t-\xi^*_t)/2)},\\
    \dot V^*_t =  \cot((V^*_t-\xi^*_t)/2),
    \end{cases}\label{eq:minimaldriver}\end{equation}
    and $\lim_{t\to T-}(V_t^*-\xi_t^*)=0$. 
    \item If instead $\t_2=\t_1+\pi,$ then the infimum in~\eqref{eqn::energy_inf} is attained for two curves, $\g^*$ and $\g^{**}$. One of the corresponding driving functions, say $\xi^*_\cdot$, satisfies \eqref{eq:minimaldriver} so that $\lim_{t\to T-}(V_t^*-\xi_t^*)=0$, while the other $\xi^{**}_\cdot,$ satisfies $\dot\xi^{**}_t=-\dot\xi^*_t$ for all $t\in[0,T)$, so that $\lim_{t\to T-}(V_t^{**}-\xi_t^{**})=2\pi$.
    \end{itemize}
\end{prop}

\begin{proof}
    We denote the right-hand side of~\eqref{eqn::energy_inf} by
    \begin{equation}
        H_\lambda(\t) = \int_0^{\t}\Big(\sqrt{2\lambda+4\cot^2(u/2)}-2\cot(u/2)\Big)\ud u,
    \end{equation}
    for $\t\in[0,\pi]$, and $H_\l(\t)=H_\l(2\pi-\t)$ for $\t\in(\pi,2\pi]$. 
    Let $\g_{[0,t]}:[0,t]\to\m D\setminus\{0\}$ be a simple curve, parametrized by capacity, with an absolutely continuous driving function $t\mapsto\xi_t$ and set $V_t$ as in~\eqref{eqn::defV}. Define
    \begin{equation}\label{eqn::defJ}
        J^{\lambda}_{(\theta_1, \theta_2)}\left(\g_{[0,t]}\right):=\frac{1}{2}\int_0^t\bigg(\dot\xi_s-3\cot((V_s-\xi_s)/2)-H'_\lambda(V_s-\xi_s)\bigg)^2\ud s.
    \end{equation}
    Since $H_\lambda$ is not differentiable at $\theta=\pi$ (the left and right derivatives differ by a sign), the integrand on the right-hand side of Eq. \eqref{eqn::defJ} is not necessarily well-defined for $s$ such that $V_s-\xi_s=\pi$. However, this is not an issue: Let $E=\{s\in[0,t]:V_s-\xi_s=\pi\}$. In the interior of $E$, we have $\dot\xi_s=0$ and $\dot V_s=0$. Hence, the integrand is well-defined and takes the value $2\lambda$ for $s\in E^\circ$. 
    Since $\partial E$ has measure zero the integrand need not be well-defined there.

    Let us connect $J^{\lambda}_{(\theta_1, \theta_2)}$ to the chordal Loewner energy $I$ in Lemma~\ref{lem::chordalLoewnerenergy}. We have the following two observations.
    \begin{itemize}
        \item Denote by $\hat{\gamma}_t$ the union of $\gamma_{[0,t]}$ and the hyperbolic geodesic from $\gamma_t$ to $\ee^{\ii\theta_2}$ in $\m D\setminus\gamma_{[0,t]}$. Then Lemma~\ref{lem::chordalLoewnerenergy} gives the energy of $\hat{\gamma}_t$: 
        \begin{equation*}
        I(\hat{\g}_t)=\frac{1}{2}\int_0^t\bigg(\dot \xi_s - 3\cot((V_s-\xi_s)/2)\bigg)^2 \ud s.
    \end{equation*}
    \item For $H_{\lambda}$, we have
    \begin{equation*}
        H_\l(V_t-\xi_t)-H_\l(\t_2-\t_1)=\int_0^t H'_\l(V_s-\xi_s)\Big(\cot((V_s-\xi_s)/2)-\dot \xi_s\Big)\ud s.
    \end{equation*}
    \end{itemize}
    Plugging these two observations into~\eqref{eqn::defJ}, we have
    \begin{equation}\label{eq:Jintegrated}
        J^{\lambda}_{(\theta_1, \theta_2)}\left(\g_{[0,t]}\right)=I(\hat{\g}_t)-\l\log\CR(\m D\setminus\g_t)+H_\l(V_t-\xi_t)-H_\l(\t_2-\t_1).
    \end{equation}
    
    Suppose $\g:(0,T)\to\m D\setminus\{0\},$ with $\g(0+)=\ee^{\ii \t_1}$ and $\g(T-)=\ee^{\ii \t_2},$ has finite chordal Loewner energy. Then the associated radial driving function, in the capacity parametrization, $\xi_\cdot$, is absolutely continuous on $[0,t]$ for all $t\in[0,T)$. Furthermore, a harmonic measure argument shows that 
    $$V_t-\xi_t\to 0\quad\text{or}\quad V_t-\xi_t\to 2\pi,\quad \text{as }t\to T-$$
    (depending on, on which side of the origin $\g$ passes). Hence, Eq.~\eqref{eq:Jintegrated} implies
    \begin{equation}
        I(\g)-\l\log\CR(\m D\setminus\g) = H_\l(\t_2-\t_1) + \lim_{t\to T-}J^{\lambda}_{(\theta_1, \theta_2)}(\g_t)\geq H_\l(\t_2-\t_1).\label{eq:Jlimit}
    \end{equation}
    
    Now let $\t_1<\t_2\leq \t_1+\pi$, and let $(\xi^*_t, V^*_t)$ be the solution to~\eqref{eq:minimaldriver}. Then, 
    \begin{equation*}
        \partial_t (V^*_t-\xi^*_t)=-\sqrt{2\l+4\cot^2((V^*_t-\xi^*_t)/2)},
    \end{equation*}
    so that there does, indeed, exist $T\in(0,\infty)$ so that $\lim_{t\to T-}(V_t^*-\xi_t^*)=0$. It follows from~\eqref{eq:Jlimit} that $\xi^*_\cdot$ is the driving function of a simple curve $\g^*$, with $\g^*(0+)=\ee^{\ii \t_1}$ and $\g^*(T-)=\ee^{\ii \t_2}$, and that 
    $$I(\g^*)-\l\log\CR(\m D\setminus\g^*)=H_\l(\t_2-\t_1).$$
    Thus, the infimum of $I(\gamma)-\lambda\log\CR(\m D\setminus\gamma)$ equals $H_{\lambda}(\theta_2-\theta_1)$ and is attained by $\gamma^*$. Furthermore, any $\tilde\g\in\mf X (\m D; \ee^{\ii \t_1}, \ee^{\ii \t_2})$ which minimizes $I(\g)-\l\log\CR(\m D\setminus\g)$ must satisfy $J^{\lambda}_{(\theta_1, \theta_2)}\left(\tilde\g_{[0,t]}\right)=0$ for all $t$, or equivalently, its driving function must satisfy
    \begin{equation}\label{eq:DEminimialdriver}
        \dot\xi_t = 3\cot((V_t-\xi_t)/2))+H_\lambda'(V_t-\xi_t), \quad \text{a.e.}
    \end{equation}
    If $\t_2<\t_1+\pi$ the unique continuous solution of~\eqref{eq:DEminimialdriver} is $\xi_\cdot^*$. If $\t_2=\t_1+\pi$, then~\eqref{eq:DEminimialdriver} has two continuous solutions, $\xi_\cdot^*$ and $\xi_\cdot^{**}$, where $\dot\xi_t^{**}=-\dot\xi_t^*$.
\end{proof}

Since $\mc Z_\a$ is a function of $\t : = \t_2 - \t_1$, so we may write $\mc U^\l (\t) = \mc U^\l (\t_1, \t_2)$. The next corollary explains why we do not find this solution in Proposition~\ref{prop:part_wo_int_0}.
\begin{cor}\label{cor::Ulambda_explicit}
    We let $\theta = \t_2 - \t_1$ and write $\mc U^\lambda (\t) = \mc U^\lambda (\t_1, \t_2)$. Then we have~\eqref{eq:U_l_explicit}: for $\theta\in(0,\pi)$, 
 \begin{align*}
   & \mc U^{\lambda}(\theta) 
    =  \mc U^{\lambda}(2\pi-\theta) = -2\log\sin(\theta/2)
    +\int_{\theta}^{\pi}\sqrt{2\lambda+4\cot^2(u/2)}\, \ud u. 
\end{align*}
In particular, 
it satisfies 
  \begin{equation}\label{eqn::BPZ_nonC2}
    (\mc U'(\t))^2 +2 \cot (\t/2) \mc U'(\t) - \frac{3}{\left(\sin(\t/2)\right)^2} = -3 +2\l\end{equation}
    and has the left derivative $-\sqrt{2\l}$ and right derivative $\sqrt {2\l}$ at $\t = \pi$.
\end{cor}
 In other words, $\mc U^\l$ is not differentiable at $\pi$. That is why we do not see it in Proposition~\ref{prop:part_wo_int_0}. Moreover, the driving function $\xi^*$ in \eqref{eq:minimaldriver} satisfies
$\dot \xi^*_t = \partial_1 \mc U^\lambda (\xi^*_t, V^*_t)$ which is analogous to \eqref{eq:zero_marginal}.
 \begin{proof}
    The expression \eqref{eq:U_l_explicit} follows directly from \eqref{eq:lim_U_lambda} and Proposition~\ref{prop:confRadMinimizer}. It is straightforward to check that it satisfies \eqref{eqn::BPZ_nonC2}.
 \end{proof}
 \begin{proof}[Proof of Proposition~\ref{prop::semiclassical_pf} and Proposition~\ref{prop::semiclassical_minimizer}]
This is a collection of Lemma~\ref{lem::semiclassical_Gmu}, Lemma~\ref{lem::semiclassical_Zalpha}, Proposition~\ref{prop:confRadMinimizer} and Corollary~\ref{cor::Ulambda_explicit}.     
\end{proof}

\begin{remark}
When $\lambda=2$, Corollary~\ref{cor::Ulambda_explicit} is consistent with Remark~\ref{rem::onearm}:  
when $\alpha=\alpha_1(\kappa)$ as in~\eqref{eqn::onearm}, we have $\alpha_1(\kappa)\sim -2/\kappa$ and thus 
\begin{align*}
    \mc U^{\l=2}(\theta)=&\lim_{\kappa\to 0+}\kappa\log\mc Z_{\alpha_1(\kappa)}(\theta_1, \theta_2)\\
    =&\begin{cases}
    4\log 2-6\log\sin(\theta/2)+8\log\cos(\theta/4),&\text{if }\theta\in (0,\pi];\\
    4\log 2-6\log\sin(\theta/2)+8\log\sin(\theta/4),&\text{if }\theta\in [\pi,2\pi).
    \end{cases}
\end{align*}
Then
\begin{equation*}
    \partial_{\theta}\mc U^{\l=2}(\theta)=\begin{cases}
    -3\cot(\theta/2)-2\tan(\theta/4),&\text{if }\theta\in (0,\pi);\\
    -3\cot(\theta/2)+2\cot(\theta/4),&\text{if }\theta\in (\pi,2\pi).
    \end{cases}
\end{equation*}
Note that, when $\theta\in (0,\pi)$, the derivative of the right-hand side of~\eqref{eq:U_l_explicit} is 
\begin{align*}
    -\cot(\theta/2)-\frac{2}{\sin(\theta/2)}=-3\cot(\theta/2)-2\tan(\theta/4). 
\end{align*}
\end{remark}

\begin{remark}
    Recall from Remark~\ref{rmk:wo_int} that if one removes the assumption of interchangeability from Proposition~\ref{prop:part_wo_int_0}, then one obtains, in addition to the solutions~\eqref{eq:sol_1_zero} and~\eqref{eq:sol_2_zero}, the solutions 
    $\mc U_{\l,\pm}$, $\l>0$. Note that $\mc U_{\l,+}(\t_1,\t_2)=\mc U^{\l}(\t_1,\t_2)$ for $\t_1<\t_2\leq \t_1+\pi$ and $\mc U_{\l,-}(\t_1,\t_2)=\mc U^\l(\t_1,\t_2)$ for $\t_1+\pi\leq \t_2< \t_1+2\pi$.
    Following Lemma~\ref{lem::semiclassical_Zalpha}, one can take the semiclassical limit of $\mc Z_{\a,\b}$ from Proposition~\ref{prop:remove_int}. For example, if $\l>0$ and $\a\sim-\l/\k$, we have
    $$\lim_{\k\to 0}\k\log\frac{\mc Z_{\a,1}(\t_1,\t_2)}{\mc Z_{\a,1}(0,\pi)}=\mc U_{\l,+}(\t_1,\t_2).$$
    This follows from 
    $$\inf_{\substack{\g\in \mf X (\m D; \ee^{\ii \t_1}, \ee^{\ii \t_2})\\ 0\text{ is to the left of }\g}} \left(I(\g) - \l \log \CR(\m D \setminus \g)\right)=\int_0^{\t_{21}} \Big(\sqrt{2\lambda+4\cot^2(u/2)}-2\cot(u/2)\Big)\, \dd u,$$
    which can be seen by examining the proof of Proposition~\ref{prop:confRadMinimizer}. By a similar analysis one finds
    $$\lim_{\k\to 0}\k\log\frac{\mc Z_{\a,0}(\t_1,\t_2)}{\mc Z_{\a,0}(0,\pi)}=\mc U_{\l,-}(\t_1,\t_2).$$
\end{remark}

\appendix
    \section{Euler's hypergeometric differential equations}
\label{app:hypergeometric}

\begin{lem}\label{lem::Euler_initial}
Fix $\kappa\in (0,8)$ and $\alpha\in\mathbb{R}$, we consider Euler's hypergeometric differential equation~\eqref{eqn::Euler_initial}: 
\begin{equation*}
\begin{cases}
u(1-u)\phi''(u)-\dfrac{3\kappa-8}{2\kappa}(2u-1)\phi'(u)+\dfrac{8\alpha}{\kappa}\phi(u)=0,\quad u\in(0,1);\\
\phi(1/2)=1, \quad \phi'(1/2)=0.
\end{cases}
\end{equation*}
There is a unique solution $\phi_{\alpha}$ to~\eqref{eqn::Euler_initial} in $C^2(0,1)$ and the solution $\phi_{\alpha}$ is continuous in $\alpha$. Furthermore, 
\begin{itemize}
    \item when $\alpha=\alpha_0:=1-\kappa/8$, we have    \begin{equation}\label{eqn::Euler_sol_critical}
        \phi_{\alpha_0}(u)=\left(4u(1-u)\right)^{4/\kappa-1/2};
    \end{equation} 
    \item when $\alpha< 1-\kappa/8$, we have $\phi_{\alpha}(u)>0$ for all $u\in (0,1)$;
    \item when $\alpha>1-\kappa/8$, there exists $u\in (0,1)$ such that $\phi_{\alpha}(u)\le 0$. 
\end{itemize}
\end{lem}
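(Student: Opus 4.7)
The plan is to address the four claims in sequence. For \emph{existence, uniqueness, and continuity}, since $u(1-u)$ is nonzero on $(0,1)$ and in particular at the initial point $u=1/2$, the equation is regular there with real-analytic coefficients depending linearly on $\alpha$. The Cauchy--Lipschitz theorem together with standard continuous dependence on parameters produces a unique $C^2$ (in fact real-analytic) solution $\phi_\alpha$ on $(0,1)$ that varies continuously in $\alpha$. For the \emph{critical case $\alpha_0 = 1 - \kappa/8$}, I would substitute the ansatz $f(u) = (4u(1-u))^\beta$. Using the identity $(1-2u)^2 = 1 - 4u(1-u)$, the ODE on $f$, after dividing by $f$, reduces to a sum of a term proportional to $(A-\beta)/[u(1-u)]$ and a constant term $-4A + 2\beta + 8\alpha/\kappa$, where $A = \beta^2 + \tfrac{(3\kappa-8)\beta}{2\kappa}$. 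Setting each to zero gives the two conditions $\beta = 4/\kappa - 1/2$ and $\alpha = \alpha_0$. Since $f(1/2) = 1$ and $f'(1/2) = 0$, uniqueness yields $\phi_{\alpha_0} = f$.

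For the \emph{sign analysis at general $\alpha$}, I would write $\phi_\alpha = f \cdot \psi_\alpha$ where $f = \phi_{\alpha_0}$. Since $\beta = 4/\kappa - 1/2 > 0$ for $\kappa \in (0,8)$, we have $f > 0$ on $(0,1)$, so $\phi_\alpha$ and $\psi_\alpha$ have the same sign. Using $f'/f = \beta(1-2u)/[u(1-u)]$ and that $f$ solves the ODE at $\alpha_0$, a direct computation yields
\[
u(1-u)\psi_\alpha'' + \frac{\kappa+8}{2\kappa}(1-2u)\psi_\alpha' + \frac{8(\alpha-\alpha_0)}{\kappa}\psi_\alpha = 0,
\]
with $\psi_\alpha(1/2) = 1$ and $\psi_\alpha'(1/2) = 0$. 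Both the equation and initial data are invariant under $u \mapsto 1-u$, so $\psi_\alpha$ is symmetric about $1/2$ and it suffices to analyze $[1/2, 1)$.

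When $\alpha < \alpha_0$, set $\mu = 8(\alpha_0 - \alpha)/\kappa > 0$. Evaluating the reduced ODE at $u=1/2$ gives $\psi_\alpha''(1/2) = 4\mu > 0$, so $\psi_\alpha' > 0$ just to the right of $1/2$. A first-zero argument then shows $\psi_\alpha' > 0$ on all of $(1/2, 1)$: if $u_1 \in (1/2, 1)$ were the first zero of $\psi_\alpha'$, then $\psi_\alpha(u_1) > 1 > 0$ by strict monotonicity on $(1/2, u_1)$, and substituting $\psi_\alpha'(u_1) = 0$ into the reduced ODE yields $u_1(1-u_1)\psi_\alpha''(u_1) = \mu \psi_\alpha(u_1) > 0$, contradicting the inequality $\psi_\alpha''(u_1) \le 0$ required by $\psi_\alpha'$ changing from positive to nonpositive at $u_1$. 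Hence $\psi_\alpha > 1$ on $(1/2, 1)$, and by symmetry on $(0,1/2)$, so $\phi_\alpha > 0$ throughout.

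When $\alpha > \alpha_0$, set $\mu = -8(\alpha-\alpha_0)/\kappa < 0$. I would recast the reduced ODE in Sturm--Liouville form $(p \psi_\alpha')' = \mu \rho\, \psi_\alpha$, where $p(u) = (u(1-u))^{(\kappa+8)/(2\kappa)}$ and $\rho(u) = p(u)/[u(1-u)]$ are both strictly positive on $(0,1)$, and integrate from $1/2$ to obtain $p(u)\psi_\alpha'(u) = \mu \int_{1/2}^u \rho(s) \psi_\alpha(s)\, ds$. Assume for contradiction that $\psi_\alpha > 0$ on $[1/2, 1)$; then $p\psi_\alpha'$ starts at $0$ and strictly decreases (its derivative is $\mu\rho\psi_\alpha < 0$), so is bounded above by some strictly negative value past any $u_0 > 1/2$. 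Near $u = 1$, $p(u) \sim (1-u)^{(\kappa+8)/(2\kappa)}$ with exponent $>1$ (since $\kappa < 8$), so $1/p$ is non-integrable at $u=1$; combined with $p\psi_\alpha'$ staying $\le -c < 0$, this forces $\psi_\alpha(u) \to -\infty$, contradicting $\psi_\alpha > 0$. Therefore $\psi_\alpha$ vanishes in $[1/2, 1)$, giving $\phi_\alpha(u) \le 0$ at some $u \in (0,1)$. The hardest part is this final endpoint argument: verifying the non-integrability of $1/p$ in the regime $\kappa < 8$ and ensuring the limit of $p\psi_\alpha'$ is indeed strictly negative. The other steps are routine ODE manipulations once the reduction to $\psi_\alpha$ is in place.
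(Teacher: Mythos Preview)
Your proof is correct and follows essentially the same route as the paper: both factor $\phi_\alpha = \phi_{\alpha_0}\psi_\alpha$, derive the same reduced ODE for $\psi_\alpha$, and for $\alpha > \alpha_0$ exploit the non-integrability of $(u(1-u))^{-(\kappa+8)/(2\kappa)}$ near the endpoint (you via the Sturm--Liouville form $(p\psi_\alpha')' = \mu\rho\psi_\alpha$, the paper via Gr\"onwall on $\psi_\alpha'$). The only notable difference is the $\alpha < \alpha_0$ case, where the paper uses a comparison principle for the first-order system $(\psi_\alpha, \psi_\alpha')$ to get monotonicity in $\alpha$ and hence $\psi_\alpha \ge \psi_{\alpha_0} \equiv 1$, whereas you argue directly via a first-zero analysis of $\psi_\alpha'$; both are elementary and of comparable length.
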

\begin{proof}
    By direct calculation, we see that~\eqref{eqn::Euler_sol_critical} satisfies~\eqref{eqn::Euler_initial} when $\alpha=\alpha_0$. We write
    \[\alpha_0=1-\kappa/8,\quad \phi_{\alpha_0}(u)=\left(4u(1-u)\right)^{4/\kappa-1/2},\quad \phi(u)=\phi_{\alpha_0}(u)f(u).\]
    Then Eq.~\eqref{eqn::Euler_initial} becomes
    \begin{equation}\label{eqn::Euler_f}
        \begin{cases}
        u(1-u)f''(u)-\dfrac{\kappa+8}{2\kappa}(2u-1)f'(u)+\dfrac{8}{\kappa}(\alpha-\alpha_0)f(u)=0,\quad u\in (0,1);\\
        f(1/2)=1,\quad f'(1/2)=0.
        \end{cases}
    \end{equation}
    We write $y_1=f$ and $y_2=f'$, then Eq.~\eqref{eqn::Euler_f} becomes
    \begin{equation}\label{eqn::Euler_f_y12}
        \begin{cases}
        y_1'=\mathcal{F}_1(u, y_1, y_2):=y_2;\\
        y_2'=\mathcal{F}_2(u, y_1, y_2):=\dfrac{8}{\kappa}\dfrac{\alpha_0-\alpha}{u(1-u)}y_1+\dfrac{\kappa+8}{2\kappa}\dfrac{(2u-1)}{u(1-u)}y_2;\\
        y_1(1/2)=1,\quad y_2(1/2)=0.
        \end{cases}
    \end{equation}
    The functional $(\mathcal{F}_1, \mathcal{F}_2)$ is continuous in $\Lambda=(0,1)\times \mathbb{R}\times\mathbb{R}$ and satisfies a local Lipschitz condition with respect to $(y_1, y_2)$ in $\Lambda$. Thus, the initial value problem~\eqref{eqn::Euler_f_y12} has exactly one solution, and the solution can be extended up to the boundary of $\Lambda$. This gives the unique solution $\phi_{\alpha}$ to~\eqref{eqn::Euler_initial} and the solution $\phi_{\alpha}$ is continuous in $\alpha$. 

    Next, let us check the positivity of the solution. There are two cases.

     \noindent \textbf{Case 1:} $\alpha\le\alpha_0$. In this case, $\mathcal{F}_1$ is increasing in $y_2$ and $\mathcal{F}_2$ is increasing in $y_1$. Thus we have a comparison principle for the unique solution to~\eqref{eqn::Euler_f_y12}. In particular, the solution $f$ to~\eqref{eqn::Euler_f} is decreasing in $\alpha$ as long as $\alpha\le \alpha_0$. Consequently, the unique solution $\phi_{\alpha}$ to~\eqref{eqn::Euler_initial} is decreasing in $\alpha$ as long as $\alpha\le \alpha_0$ and
    $\phi_{\alpha}(u)\ge \phi_{\alpha_0}(u)>0$ for all $u\in (0,1)$ when $\alpha\le\alpha_0$.

\noindent \textbf{Case 2:} $\alpha>\alpha_0$. We prove by contradiction and assume $f(u)\ge 0$ for all $u\in (1/2,1)$. As $f'(1/2)=0$ and $f''(1/2)=32(\alpha_0-\alpha)/\kappa<0$, there exists $\eps>0$ small such that 
\[-\delta:=f'(1/2+\eps)<0.\]
The ODE in~\eqref{eqn::Euler_f} implies that 
\[f'' (u) \le \left(\frac{\k+8}{2\k}\right) \frac{2u-1}{u(1-u)} f'(u),\quad u\in(1/2,1).\]
Using Gr\"onwall's inequality, we obtain, for $u\in(1/2+\eps,1)$, 
\[f'(u) \le f'(1/2 + \vare) \exp \left(\int_{1/2 + \vare}^u  \left(\frac{\k+8}{2\k}\right) \frac{2s-1}{s(1-s)} \dd s  \right) = - \d \left(\frac{u(1-u)}{1/4-\eps^2}\right)^{-\frac{\k+8}{2\k} }.\]
Since $\kappa\in (0,8)$, we have $\frac{\kappa+8}{2\kappa}>1$. The derivative $f'$ is not integrable as $u\to 1$. This implies that $f(u)\to -\infty$ as $u\to 1$, which contradicts our assumption and completes the proof.  
\end{proof}

\begin{lem}\label{lem::Euler_positive}
    Fix $\k\in(0,8)$ and $\a\in\m R$. Consider Euler's hypergeometric differential equation:
    \begin{equation}\label{eq:eulerhyp}
        u(1-u)\phi''(u)-\frac{3\k-8}{2\k}(2u-1)\phi'(u)+\frac{8\alpha}{\k}\phi'(u)=0,\quad u\in(0,1).
    \end{equation}
    We have the following:
    \begin{itemize}
        \item When $\alpha>1-\k/8$, there exists, for each solution $\phi$ to \eqref{eq:eulerhyp}, $u\in(0,1)$ such that $\phi(u)\leq 0.$
        \item When $\alpha=1-\k/8$, the only positive solution (up to a multiplicative constant) of \eqref{eq:eulerhyp} is
        \begin{equation*}
            \phi_{\a_0}(u)=(4u(1-u))^{4/\k-1/2}.
        \end{equation*}
        \item When $\alpha<1-\kappa/8$, the positive solutions of \eqref{eq:eulerhyp}, are (up to a multiplicative constant) all of the form
        \begin{equation*}
            \phi(u)=\b\Phi^L(u) + (1-\b)\Phi^R(u),\quad \b\in[0,1],
        \end{equation*}
        with $\Phi^L$ and $\Phi^R$ as in Remark~\ref{rem::cr_left}.
    \end{itemize}
\end{lem}
\begin{proof}
    We first consider $\a\geq 1-\k/8.$ Let $\phi_\a$ be as in Lemma~\ref{lem::Euler_initial}, and let $\psi_\a$ be the solution of \eqref{eq:eulerhyp} satisfying $\psi_\a(1/2)=0$ and $\psi_\a'(1/2)=1$. It is easily verified that 
    \begin{equation}\label{eq:tildephi}
        \psi_\a(u)=-\psi_\a(1-u),\quad u\in(0,1).
    \end{equation}
     Since $\phi_\a$ and $\psi_\a$ are linearly independent, any solution of \eqref{eq:eulerhyp} can be expressed as a linear combination of them. When $\a>1-\k/8$, it follows from Lemma \ref{lem::Euler_initial} and Eq. \eqref{eq:tildephi} that there is no positive solution of \eqref{eq:eulerhyp}. Similarly, when $\a=\a_0=1-\k/8$, it follows from Lemma \ref{lem::Euler_initial} and Eq. \eqref{eq:tildephi} that the only positive solutions of \eqref{eq:eulerhyp} are $C\phi_{\a_0}$, with $C>0$.\par
     Now consider $\a<1-\k/8$. As stated in Remark~\ref{rem::cr_left}, $\Phi^L$ and $\Phi^R$ are $C^2$-solutions to \eqref{eq:eulerhyp} ($\Phi^L,\Phi^R\in C^2((0,1))$ follows from the same argument as for $\Phi$ in Lemma~\ref{lem::CR_expectation} using Remark~\ref{rem::hypoelliptic}). Moreover, $0<\Phi^L(u),\Phi^R(u)\leq \Phi(u)$, for $u\in(0,1)$, and
     \begin{equation*}
         \Phi^L(0)=1,\quad \Phi^L(1)=0,\quad\text{and}\quad\Phi^R(0)=0,\quad\Phi^R(1)=1.
     \end{equation*}
     Since $\Phi^L$ and $\Phi^R$ are linearly independent, any solution $\phi$ of \eqref{eq:eulerhyp} can be decomposed as $\phi = C_L\Phi^R+C_R\Phi^R,$ for some real constants $C_L$ and $C_R$. If $C_L<0$, then $\phi(u)<0$ for $u>0$ small. So, for a positive solution $\phi$ we must have $C_L\geq 0$. Similarly, we deduce that $C_R\geq 0$, and clearly we may not have $C_L=C_R=0$. This finishes the proof. 
\end{proof}

\section{Solutions of the separable equation \eqref{eqn::separable}}
\begin{lem}\label{lemma:solseparable}
    Let $C\in\m R$ and consider the separable differential equation
    \begin{equation}(8+2C\sin^2(\t/2))f'(\t) = (f(\t))^2+C+4.\label{eq:sep}\end{equation}
    The only $C^1$ solutions of~\eqref{eq:sep} defined on $(0,2\pi)$ are 
    \begin{itemize}
        \item if $C+4>0$, \begin{equation}f(\t)=\begin{cases}\sqrt{C+4}\tan(\frac{\arctan(\frac{\sqrt{C+4}}{2}\tan(\t/2))}{2}+D-\pi/4)& \t\in(0,\pi)\\
        \sqrt{C+4}\tan(D)& \t=\pi\\
        \sqrt{C+4}\tan(\frac{\arctan(\frac{\sqrt{C+4}}{2}\tan(\t/2))}{2}+D+\pi/4)& \t\in(\pi,2\pi)\\
        \end{cases}\label{eq:tansol}\end{equation} 
        for any $D\in[-\pi/4,\pi/4],$
        \item if $C+4=0$,
        $$f(\t)=0\quad\text{and}\quad 
        f(\t)=\begin{cases}
            (D-\frac{1}{4}\tan(\t/2))^{-1}& \t\in(0,\pi)\\
            0& \t=\pi\\
            (E-\frac{1}{4}\tan(\t/2))^{-1}& \t\in(\pi,2\pi)\\
        \end{cases}$$
        for any $D\leq 0$ and $E\geq 0$,
        \item if $C+4<0$,
        $$f(\t)=\sqrt{-C-4}\quad\text{and}\quad f(\t)=-\sqrt{-C-4}.$$
    \end{itemize}
\end{lem}
\begin{proof}
    Note that 
    $$\frac{1}{8+2C\sin^2(\t)}=\frac{1}{4+(C+4)\tan^2(\t/2)}\frac{\ud}{\ud \t}\tan(\t/2).$$
    Thus, we can then rewrite~\eqref{eq:sep} as
    \begin{equation}\frac{\ud f}{f^2+(C+4)} = \frac{\ud \tan(\t/2)}{4+(C+4)\tan^2(\t/2)},\label{eq:sepint}\end{equation}
    under the assumptions $(f(\t))^2\neq -(C+4),$ $\t\neq\pi$ and $(C+4)\tan^2(\t/2)\neq -4.$\par
    \noindent\textbf{Case 1: }Suppose $C+4>0$. By the Picard-Lindelöf theorem Eq.~\eqref{eqn::separable} has a unique local solution $f$ around $\theta_0\in(0,2\pi)$ for each initial condition $f(\t_0)=f_0$. Integrating both sides of~\eqref{eq:sepint} gives
    $$\arctan(f(\t)/\sqrt{C+4})=
    \begin{cases}\frac{\arctan(\frac{1}{2}\sqrt{C+4}\tan(\t/2))}{2}+D-\pi/4&\t\in(0,\pi)\\
    \frac{\arctan(\frac{1}{2}\sqrt{C+4}\tan(\t/2))}{2}+D+\pi/4&\t\in(\pi,2\pi)\end{cases}$$
    for some $D$. Note that the left-hand side takes values in $(\-\pi/2,\pi/2)$ and that this is true for the right-hand side only if $D\in[-\pi/4,\pi/4]$. Thus the only global solutions of~\eqref{eq:sep} in the case $C+4>0$ are the ones given in~\eqref{eq:tansol}.\par
    \noindent\textbf{Case 2:} Suppose $C+4=0$. Picard-Lindelöf guarantees existence and uniqueness of local solutions to the initial value problem away from $\t=\pi$. The solutions on $(0,\pi)$ are of the form $f\equiv 0$ or 
    $$f(\t)=\frac{1}{D-\frac{1}{4}\tan(\t/2)}$$
    for some $D\in\m R$. In order for the latter to be defined on all of $(0,\pi)$ we require $D\leq 0$. Similarly, the solutions on $(\pi,2\pi)$ are of the form $f\equiv 0$ or 
    $$f(\t)=\frac{1}{E-\frac{1}{4}\tan(\t/2)}$$
    for some $E\geq 0$. To find solutions on $(0,2\pi)$ we patch the solutions above. For this we note that the left and right derivatives of $f(\t)=(D-\frac{1}{4}\tan(\t/2))^{-1}$ at $\pi$ are both $2$, regardless of $D$.\par
    \noindent\textbf{Case 3:} Finally, fix $C+4<0$ and let $\theta_0$ be the unique solution to $8+2C\sin^2(\t/2)=0$ on $(0,\pi)$. Then Picard-Lindelöf guarantees existence and uniqueness of local solutions to the initial value problem around each $\theta\in(0,2\pi)\setminus\{\t_0,2\pi-\t_0\}$. For the initial values $\pm\sqrt{-C-4}$ we have constant solutions. For the other initial values we integrate~\eqref{eq:sepint} and find solutions of the form
    $$\log\frac{|f(\theta)-\sqrt{-C-4}|}{|f(\theta)+\sqrt{-C-4}|}=\frac{1}{2}\log\frac{|\sqrt{-C-4}\tan(\t/2)+2|}{|\sqrt{-C-4}\tan(\t/2)-2|}+D,$$
    $D\in\m R$. More concretely, if $\t_1\in(0,2\pi)\setminus\{\t_0,2\pi-\t_0\}$ and $f(\t_1)\in\m R\setminus\{\pm\sqrt{-C-4}\}$ then 
    $$f(\theta)=T\bigg(\frac{1}{2}\log\frac{|\sqrt{-C-4}\tan(\t/2)+2|}{|\sqrt{-C-4}\tan(\t/2)-2|}+D\bigg)$$
    for $\t$ in the connected component of $(0,2\pi)\setminus\{\t_0,2\pi-\t_0\}$ containing $\t_1$, where $T$ is the inverse of $\log\frac{|x-\sqrt{-C-4}|}{|x+\sqrt{-C-4}|}$ on the connected component of $\m R\setminus\{\pm\sqrt{-C-4}\}$ containing $f(\t_1)$. An explicit computation shows that all such solutions satisfy $|f'(\t)|\to\infty$ as $\t\to\t_0$ or $\t\to 2\pi-\t_0$ and therefore they can not be extended to $C^1$ solutions on $(0,2\pi).$ 
\end{proof}
\textbf{Acknowledgments. }
\begin{itemize}
    \item We thank Mo Chen, Yu Feng, Chongzhi Huang, Eveliina Peltola, Fredrik Viklund, and Lu Yang for helpful discussions.
    We thank Minjae Park for the simulation in Figure~\ref{fig::simulation}. We thank the anonymous referees for their constructive comments on a previous version of the paper.
    \item E.K. is supported by a grant from the Knut and Alice Wallenberg foundation. E.K. acknowledges the hospitality of the IHES where part of this work was carried out.
    \item Y.W. is funded by the European Union (ERC, RaConTeich, 101116694)\footnote{Views and opinions expressed are however those of the author(s) only and do not necessarily reflect those of the European Union or the European Research Council Executive Agency. Neither the European Union nor the granting authority can be held responsible for them.}.
    \item H.W. is supported by Beijing Natural Science Foundation (JQ20001) and New Cornerstone Investigator Program 100001127. Part of this work was carried out during H.W.'s visit at IHES under the support of ``K.C. Wong Education Foundation''. H.W. is partly affiliated with Yanqi Lake Beijing Institute of Mathematical Sciences and Applications, Beijing, China.  
\end{itemize}

\textbf{Conflict of interest.} The authors have no competing interests to declare that are relevant to the content of this article.


\end{document}